\numberwithin{equation}{section}
\newcommand{\E}{{\bf E}}
\newcommand{\R}{{\mathbb R}}
\newcommand{\cU}{\mathcal{U}}
\newcommand{\norm}[1] {\left \| #1 \right \|}
\newcommand{\inclu}[0] {\ar@{^{(}->}}
\newcommand{\Null}{\text{Null}}
\newcommand{\tr}{\text{tr}}
\newcommand{\spann}{\text{span}}
\newcommand{\dist}{{\rm dist}}
\newcommand{\cA}{\mathcal{A}}
\newcommand{\EE}{\mathbb{E}}
\newcommand{\trace}[1]{\mathrm{trace}\left( #1\right)}
\newcommand{\RR}{\mathbb{R}}
\newcommand{\cT}{\mathcal{T}}
\newcommand{\cM}{\mathcal{M}}
\newcommand{\range}{\mathrm{range}}
\newcommand{\rank}{\mathrm{rank}}
\newcommand{\abs}[1]{\left| #1 \right|}
\newcommand{\fnorm}[1]{\left\| #1 \right\|_{F}}
\newcommand{\proj}{P}
\newcommand{\argmin}{\operatornamewithlimits{argmin}}
\newcommand{\opnorm}[1]{\left\|#1\right\|_{op}}
\newcommand{\dotp}[1]{\left\langle #1\right\rangle}
\newtheorem{theorem}{Theorem}[section]
\newtheorem{definition}[theorem]{Definition}
\newtheorem{proposition}[theorem]{Proposition}
\newtheorem{lemma}[theorem]{Lemma}
\newtheorem{corollary}[theorem]{Corollary}
\newtheorem{assumption}{Assumption}
\newcommand{\lb}{\mathtt{lb}}
\newcommand{\ub}{\mathtt{ub}}
\newcommand{\paren}[1]{ \left( #1 \right) }
\theoremstyle{remark}
\newcommand{\gdstep}{\eta}
\newcommand{\gdrds}{K}
\newcommand{\etax}{\eta_x}
\newcommand{\cS}{\mathcal{S}}
\newcommand{\gd}{\mathtt{GD}}
\newcommand{\gdpk}{\mathtt{GDPolyak}}
\newcommand{\gdpklb}{\mathtt{GDPolyakLB}}
\newcommand{\polyak}{\mathtt{Polyak}}
\newcommand{\gammalb}{\gamma_\lb}
\newcommand{\gammaub}{\gamma_\ub}
\newcommand{\betalb}{\beta_\lb}
\newcommand{\betaub}{\beta_\ub}
\newcommand{\dlb}{D_\lb}
\newcommand{\dub}{D_\ub}
\newcommand{\etay}{\eta_y}
\newcommand{\deltaU}{\delta_{\mathtt{U}}}
\newcommand{\deltagdpk}{\delta_{\mathtt{init}}}
\newcommand{\xout}{x_{\mathtt{out}}}
\newcommand{\ckl}{c_{\mathtt{L}}}
\newcommand{\lipf}{L_f}
\definecolor{blue}{rgb}{0,0,0}
\title{Gradient descent with adaptive stepsize converges (nearly) linearly under fourth-order growth}
\author{Damek Davis\thanks{Wharton Department of Statistics and Data Science, University of Pennsylvania,
		Philadelphia, PA 19104, USA;
		\texttt{www.damekdavis.com}. Research of Davis supported by an Alfred P. Sloan research fellowship and NSF DMS award 2047637. Research was completed while Davis was visiting the Simons Institute for the Theory of Computing.}
	\and Dmitriy Drusvyatskiy\thanks{Department of Mathematics, U. Washington, Seattle, WA 98195; \texttt{www.math.washington.edu/$\sim$ddrusv}.
		Research of Drusvyatskiy was supported by NSF DMS-2306322, NSF CCF 1740551, and AFOSR FA9550-24-1-0092 awards.}
	\and Liwei Jiang\thanks{Edwardson School of Industrial Engineering, Purdue University, West Lafayette, IN 47906, USA;
		\texttt{liwei-jiang97.github.io/}.}
}	
	\date{}
\begin{document}
	\maketitle
		\begin{abstract}A prevalent belief among optimization specialists is that linear convergence of gradient descent is contingent on the function growing quadratically away from its minimizers.  In this work, we argue that this belief is inaccurate. We show that gradient descent with an adaptive stepsize converges at a local (nearly) linear rate on any smooth function that merely exhibits fourth-order growth away from its minimizer. The adaptive stepsize we propose arises from an intriguing decomposition theorem: any such function admits a smooth manifold around the optimal solution---which we call the ravine---so that the function grows at least quadratically away from the ravine and has constant order growth along it. The ravine allows one to interlace many short gradient steps with a single long Polyak gradient step, which together ensure rapid convergence to the minimizer. We illustrate the theory and algorithm on the problems of  matrix sensing and factorization and learning a single neuron in the overparameterized regime.
\end{abstract}

\section{Introduction}	
	Classical optimization literature shows that gradient descent converges linearly when applied to smooth convex functions that grow quadratically away from their minimizers. See, for example, the seminal work \cite{polyak1963gradient} in this regard. 
Numerous extensions of such results to more sophisticated proximal algorithms have been established recently, for example, in \cite{Luo1993,drusvyatskiy2018error,karimi2016linear,zhou2017unified,necoara2019linear}. This expansive body of literature suggests that linear convergence of gradient descent is contingent upon quadratic growth. Indeed, simple examples confirm this intuition for constant stepsize gradient descent.
In contrast, this work shows that this popular belief is no longer accurate when the stepsizes can be chosen adaptively.

As motivation, let us perform the following thought experiment. Consider minimizing the univariate function $f(x)=\tfrac{1}{4}x^4$, which grows only quartically away from its minimizer. Then gradient descent with stepsize $\eta_t$ generates the iterates: 
$x_{t+1}=(1-\eta_t x^2_t)x_t.$ Clearly, if the stepsize is constant $\eta_t\equiv\eta$, then gradient descent converges at a sublinear rate. On the other hand, if we set the stepsize adaptively, say according to Polyak's rule $\eta_t=f(x_t)/(f'(x_t))^2=1/4x^2$, then the iterates  $x_{t+1}=\tfrac{3}{4}x_t$ converge linearly to zero. 
This rudimentary example suggests that adaptively chosen long steps may endow gradient descent with a local linear rate of convergence even for highly degenerate functions. We show that this is indeed the case, at least when the function grows quartically away from its solution set.
Intriguingly, this suggests that a ``good" stepsize for the function $f$ is one that grows exponentially with the iteration counter, a phenomenon we will observe repeatedly. 
The stepsize sequence we use is epoch-based: the algorithm takes multiple constant size gradient steps in each epoch followed by a single long Polyak step. This strategy is different from other adaptive stepsize/preconditioning methods popular in the machine learning literature, such as AdaGrad~\cite{duchi2011adaptive} and Adam~\cite{kingma2014adam}. More formally, 
	recall that gradient descent with constant stepsize $\eta$ is simply the algorithm:
	\begin{equation} \label{eqn:grad_desc_const}
		x_{k+1} = x_k - \gdstep \nabla f(x_k).
	\end{equation}
	Henceforth, we  let $\gd(x,  \gdstep,\gdrds)=x_K$ denote the $K$-th iterate of the gradient descent sequence \eqref{eqn:grad_desc_const} when initialized at $x$. The algorithm we propose, summarized as Algorithm~\ref{alg:GD-Polyak}, proceeds by alternating between $K$ steps of constant stepsize gradient descent and a single Polyak step. 
	
	\begin{algorithm}[H]
		\caption{$\gdpk(x_0, \gdstep, \gdrds, I$)}
		\label{alg:GD-Polyak}
		\begin{algorithmic}[1]
			\State {\bfseries Input}   $x_0, \gdstep, K, I$.
			\For {$i = 1,\ldots, I$}
			\State $\tilde x_{i} = \gd(x_{i-1}, \gdstep, \gdrds)$
			\State $x_{i} = \tilde x_{i} -\frac{f(\tilde x_{i}) - f^*}{\|\nabla f(\tilde x_{i})\|^2} \nabla f(\tilde x_{i})$.
			\EndFor
			\State $x_{\mathtt{out}} =  \argmin \{f(x_i), f(\tilde x_i) \colon i = 1,\ldots, I\} $
			\State \Return $\xout$
		\end{algorithmic}
\end{algorithm}
The following is our main theorem. For simplicity, we state it when the minimizer is unique; see Theorem~\ref{thm: main_thm_intro} for the general result.

\begin{theorem}[informal]\label{thm:basic_scifi}
	Consider a smooth function $f$ satisfying $f(x)-\inf f\geq \Omega( \|x-\bar x\|^4)$ for all $x$ near the minimizer $\bar x$. Then, when initialized sufficiently close to $\bar x$ with sufficiently small $\eta$, Algorithm~\ref{alg:GD-Polyak} reaches any $\varepsilon$-ball around $\bar x$ after $O(\log^2(1/\varepsilon))$ gradient evaluations.
\end{theorem}

\begin{figure}[h]
	\centering
	\begin{subfigure}[b]{0.55\textwidth}
		\includegraphics[width=\textwidth]{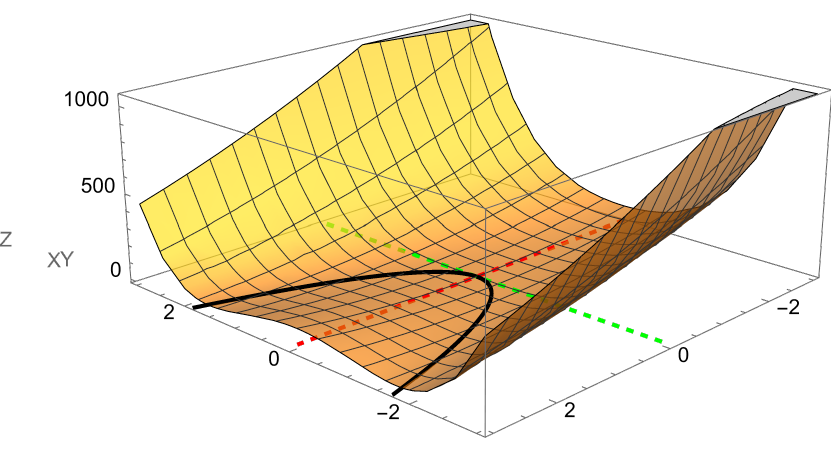}
		\caption{The ravine $\cM=\{(x,y):y=x^2\}$ (black), and the tangent/normal space (green/red)}
	\end{subfigure}
	\hfill
	\begin{subfigure}[b]{0.4\textwidth}
		\includegraphics[width=\textwidth]{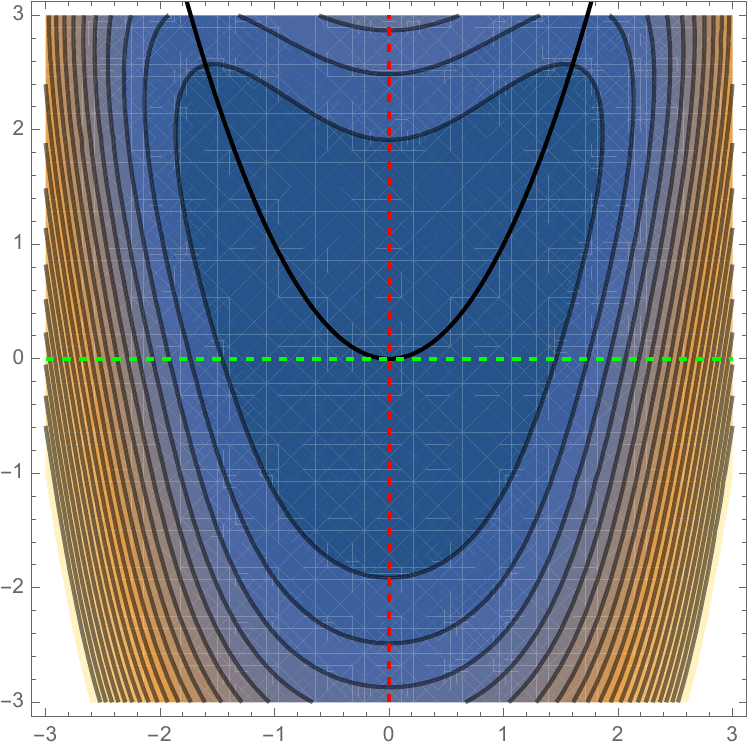}
		\caption{Contour plot}
		\begin{minipage}{.1cm}
			\vfill
		\end{minipage}
	\end{subfigure}
		\caption{The function $f(x,y)=x^4+10(y-x^2)^2$}
		\label{fig:combined_intro}
	\end{figure}
	
	The motivation behind our stepsize sequence is best illustrated with an example. Consider the Rosenbrock function $f(x,y)=x^4+10(y-x^2)^2$ depicted in Figure~\ref{fig:combined_intro}, which has the origin as its minimizer. The Rosenbrock function is designed to make first-order methods perform poorly since gradient descent has the tendency to jump back and forth across the parabola $\cM=\{(x,y): y=x^2\}$, depicted in black in Figure~\ref{fig:combined_intro}. In contrast, a faster algorithm such as Newton's method would traverse $\cM$ tangentially. The manifold $\cM$ is geometrically distinctive in that $f$ has a valley along $\cM$, and hence we will call $\cM$ the {\em ravine}. There is a long history of such geometric structures in optimization going back at least to the ``ravine method" of Gelfand and Tsetlin \cite{gelfand1961printszip}, which explicitly tries to move tangent to the ravine. Notably, the ravine method was a precursor to Polyak's heavy ball algorithm \cite{polyak1964some} and Nesterov's accelerated method \cite{nest_orig}, and has gained some recent attention \cite{attouch2022ravine,shi2022understanding}.

	The ravine in this example plays a special role, stagnating the performance of gradient descent with constant stepsize. Roughly speaking, gradient descent struggles because $f$ grows rapidly (quadratically) away from $\cM$ and slowly (quartically) along $\cM$. Given the importance of the ravine in this example, it is natural to study analogous objects for general smooth functions. For a smooth function $f$ with a minimizer $\bar x$, we introduce the definition:
	
	\begin{quote}
		A manifold $\cM$ is called a {\em ravine} for $f$ at $\bar x$ if $\cM$ is tangent to ${\rm Null}(\nabla^2 f(\bar x))$ at $\bar x$ and there is a retraction\footnote{By a retraction, we simply mean a smooth map $R\colon U\to\cM$ defined on a neighborhood $U$ of $\bar x$ that restricts to identity on $\cM$ and such that the Jacobian $\nabla R(\bar x)$ coincides with the projection onto the tangent space of $\cM$ at $\bar x$.} $ R(\cdot)$ onto $\cM$ satisfying growth lower bound: 
		\begin{equation}\label{eqn:prop_intro}
			f(x)-f(R(x)) \geq \Theta(1)\cdot \|x-R(x)\|^2.
		\end{equation}
	\end{quote}
	In words, the definition stipulates that there is a retraction $R$ onto $\cM$ such that the Function gap $f(x)-f(R(x))$ is lower bounded by a constant multiple of the square distance $\|x-R(x)\|^2$. Although the ideal retraction is the nearest-point projection $P_{\cM}$, stipulating the equality $R=P_{\cM}$ would be quite stringent. For example, the ravine of the  Rosenrock function at the origin is simply the parabola $\cM=\{(x,y)\colon y=x^2\}$ and the retraction is the map $R(x,y)=(x,x^2)$. Moreover, it is straightforward to see that the projection $P_{\cM}$ does not satisfy the requisite property \eqref{eqn:prop_intro}; indeed, the gap $f(x)-f(P_{\cM}(x))$ can be negative. Reassuringly, we will show that a ravine always exists due to the so-called Morse Lemma with parameters.

	With the ravine $\cM$ at hand, we can  decompose $f$ into  {\em normal} and {\em tangent parts}:
	$$f(x)=f_{N}(x)+f_{T}(x),$$
	where we define $f_N(x):=f(x)-f(P_{\cM}(x))$ and $f_T(x):=f(P_{\cM}(x))$.
	We will see that condition~\ref{eqn:prop_intro} implies that the iterates of gradient descent with a constant stepsize will approach $\cM$ at a linear rate up to a point when $f$ behaves similarly to its tangent part $f_{T}$. At this point, if we assume that $f_T$ behaves like a power function on $\cM$, a single Polyak gradient step will move the iterate significantly closer to the optimal solution. Unfortunately, the Polyak step causes the next iterate to move far away from the ravine. Therefore, we repeat the process, running multiple constant size gradient steps again, followed by a single Polyak step, and so forth. Interestingly, we show that if 
	$f$ grows quartically away from a unique minimizer, then $f_T$ automatically has constant order growth on $\cM$; therefore, the logic above applies. Combining all the ingredients yields the main Theorem~\ref{thm:basic_scifi}.\footnote{If the set of the minimizers $S$ is not a singleton, the same conclusion applies if in addition, we assume that the Hessian $\nabla^2 f$ has constant rank along $S$.}

	\begin{figure}[t]
		\centering
		\begin{subfigure}[b]{0.325\textwidth}
			\centering
			\includegraphics[width=\textwidth]{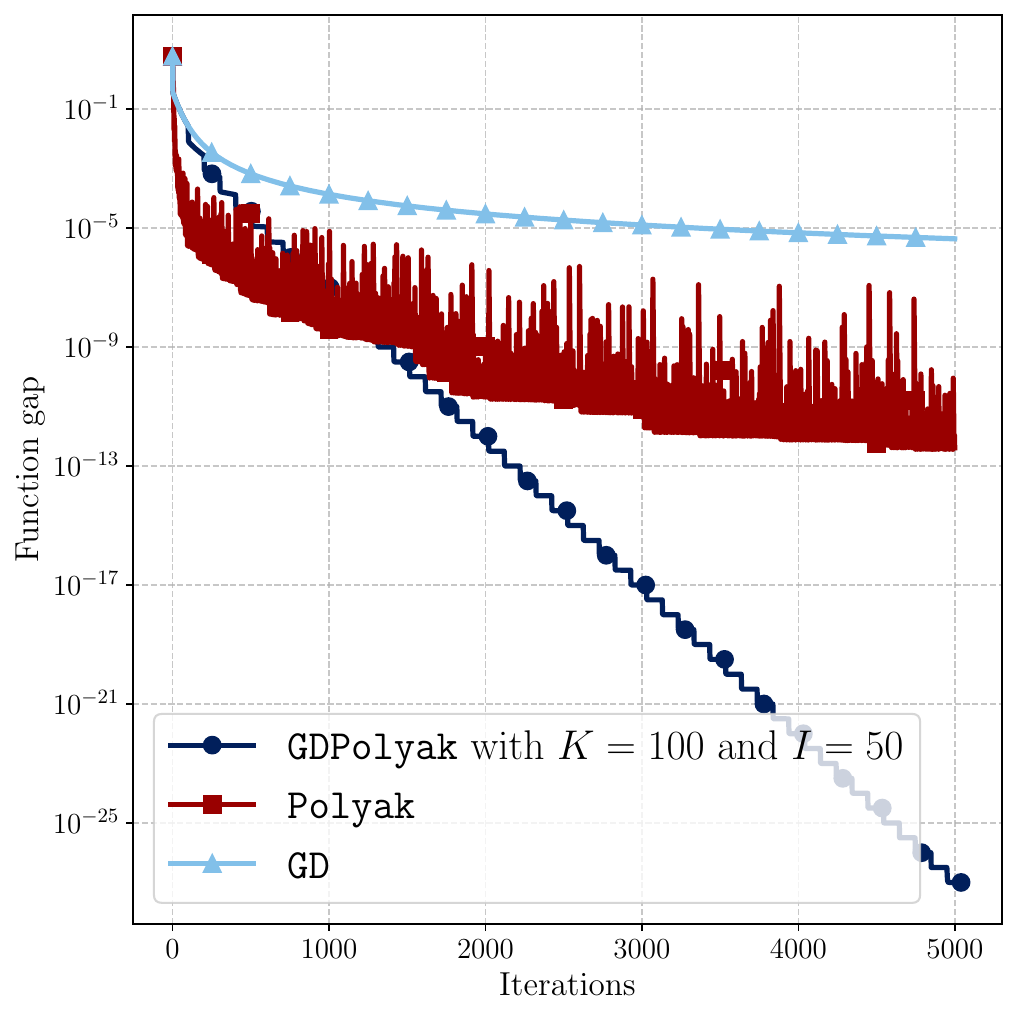} 
			\caption{Function gap}
			\label{fig:rosenbrock:gap}
		\end{subfigure}
		\begin{subfigure}[b]{0.325\textwidth}
			\centering
			\includegraphics[width=\textwidth]{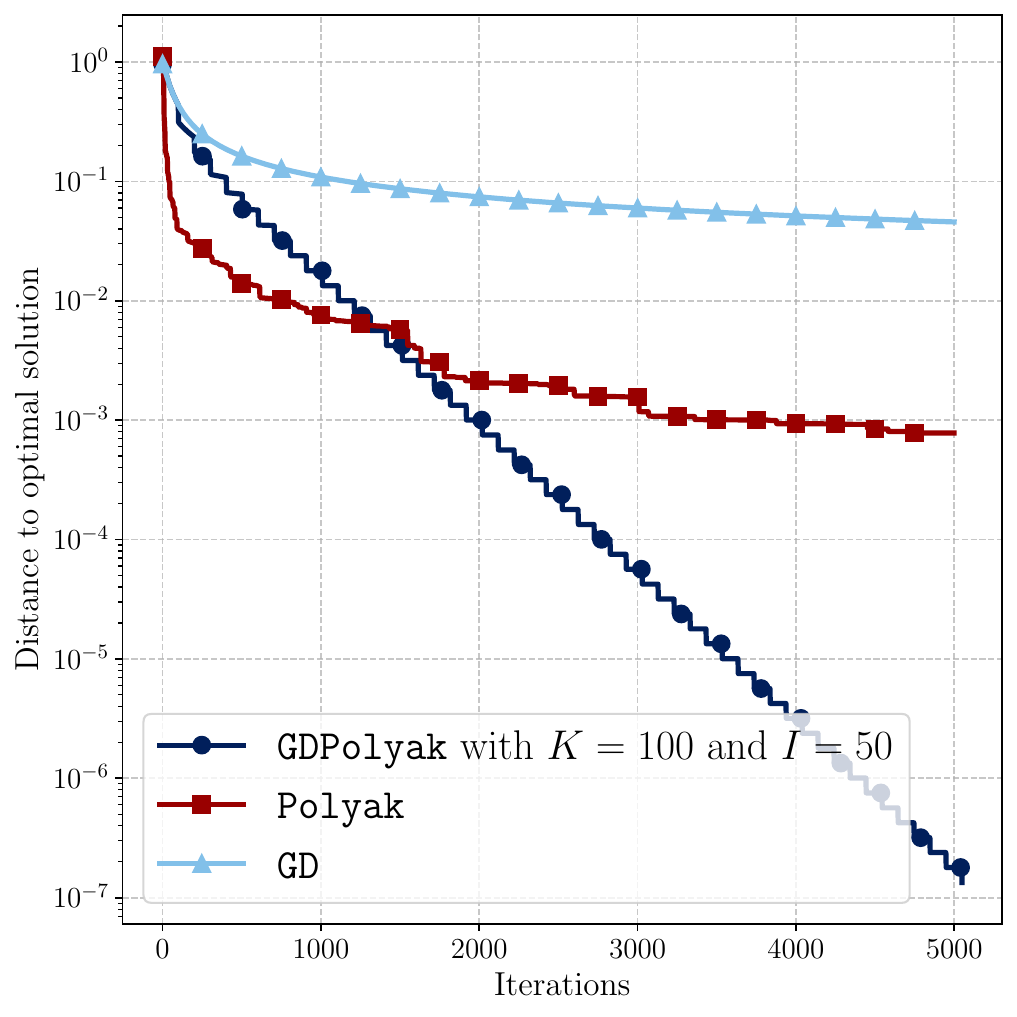} 
			\caption{Distance to optimal solution}
			\label{fig:rosenbrock:distance}
		\end{subfigure}
		\begin{subfigure}[b]{0.325\textwidth}
			\centering
			\includegraphics[width=\textwidth]{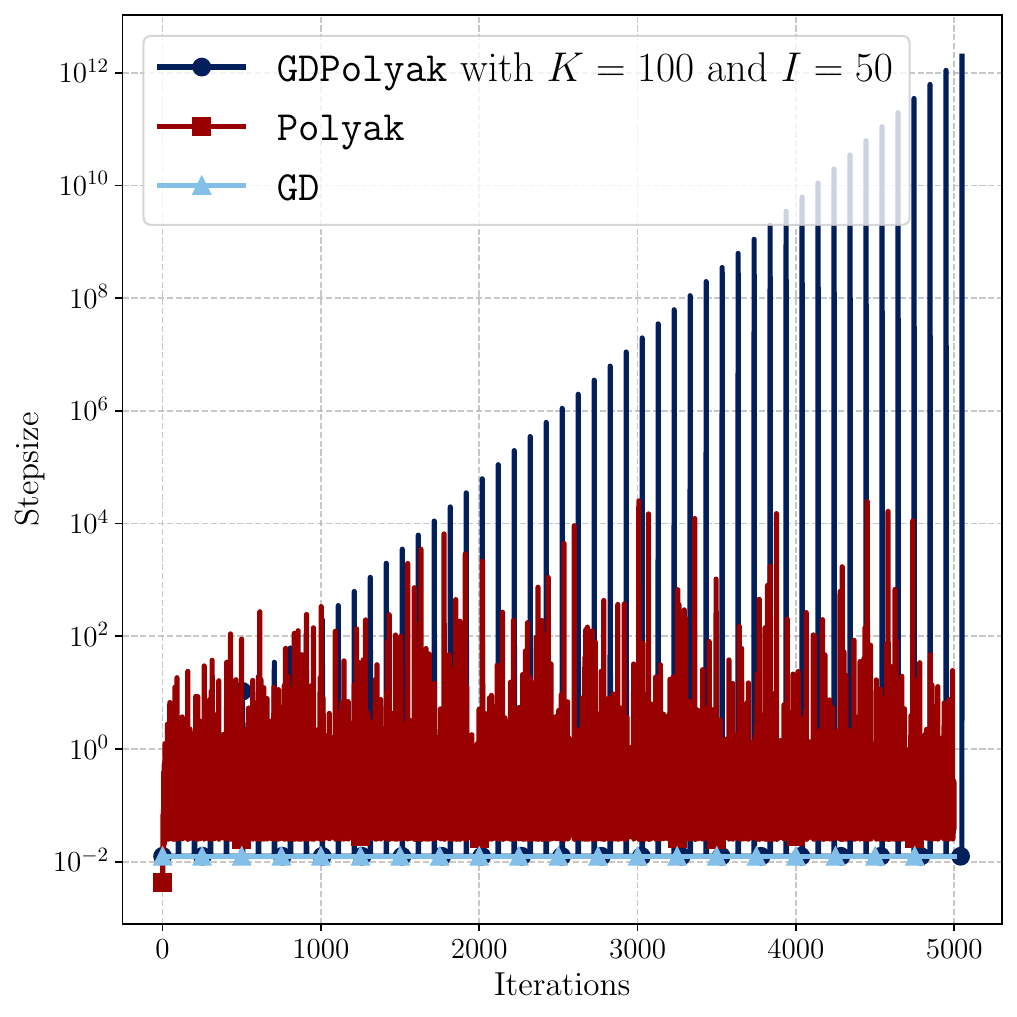} 
			\caption{Stepsize}
			\label{fig:rosenbrock:stepsize}
		\end{subfigure}
		
		\caption{Comparison of $\gdpk$ with $\gd$ and $\polyak$ on the Rosenbrock function. $\gdpk$ proceeds in $I = 50$ epochs of length $K = 100$. 
			During each epoch, $\gdpk$ uses the same short stepsize as $\gd$, i.e., $.0125$.
			After taking $K=100$ steps with short stepsizes, $\gdpk$ takes a step with the Polyak stepsize $\frac{f(x) - f^\ast}{\|\nabla f(x)\|^2}$.}
		\label{fig:rosenbrock}
	\end{figure}
	Figure~\ref{fig:rosenbrock} illustrates the performance of our proposed algorithm -- denoted $\gdpk$ -- on the Rosenbrock function.\footnote{Code is available at \href{https://github.com/damek/GDPolyak}{https://github.com/damek/GDPolyak}} We compare  $\gdpk$ to gradient descent with constant stepsize ($\gd$) and gradient descent with the Polyak stepsize ($\polyak$). 
	The plots show that while $\gd$ and $\polyak$ converge sublinearly both in terms of function value and distance to the optimal solution, $\gdpk$ converges linearly. 
	Looking at Figure~\eqref{fig:rosenbrock:stepsize}, we see that the adaptive stepsize of $\gdpk$ is substantially different from the stepsize taken by $\gd$ and $\polyak$.
	Intriguingly, the plot shows that this stepsize is growing exponentially.

	We apply our techniques to two applications: matrix factorization/sensing and overparameterized training of a student-teacher neural network.
	
	\paragraph{Matrix sensing.}
	\begin{figure}[h]
		\centering
		\begin{subfigure}[b]{0.325\textwidth}
			\centering
			\includegraphics[width=\textwidth]{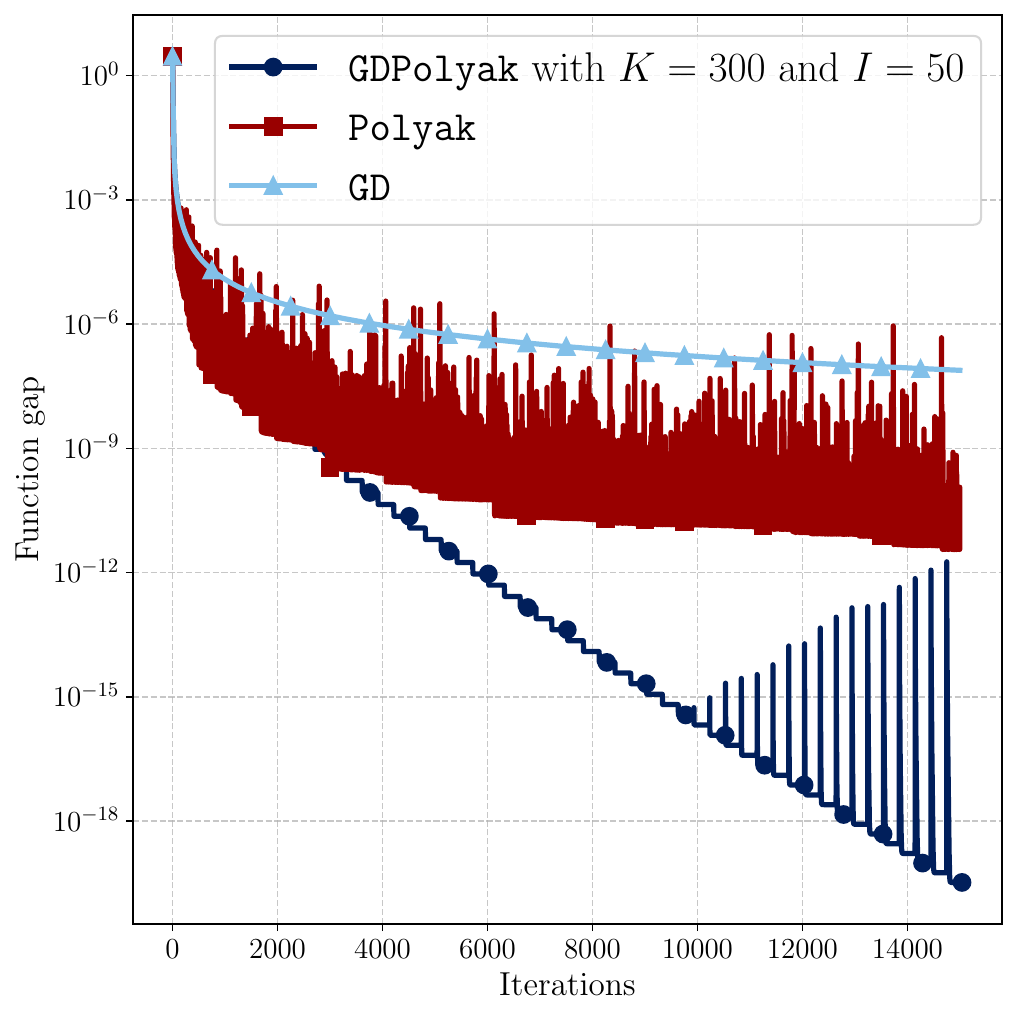} 
			\caption{Function gap}
		\end{subfigure}
		\hfill
		\begin{subfigure}[b]{0.325\textwidth}
			\centering
			\includegraphics[width=\textwidth]{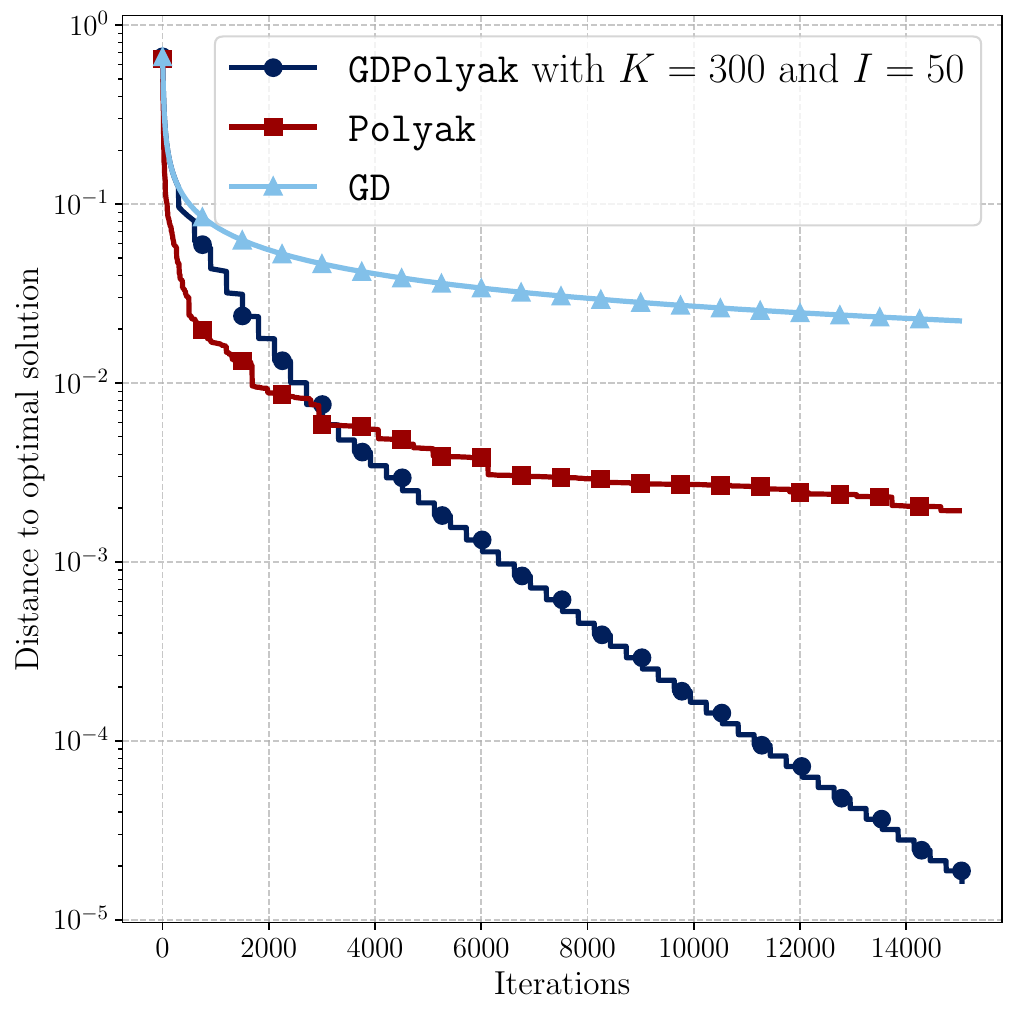} 
			\caption{Distance to optimal solution}
		\end{subfigure}
		\hfill
		\begin{subfigure}[b]{0.325\textwidth}
			\centering
			\includegraphics[width=\textwidth]{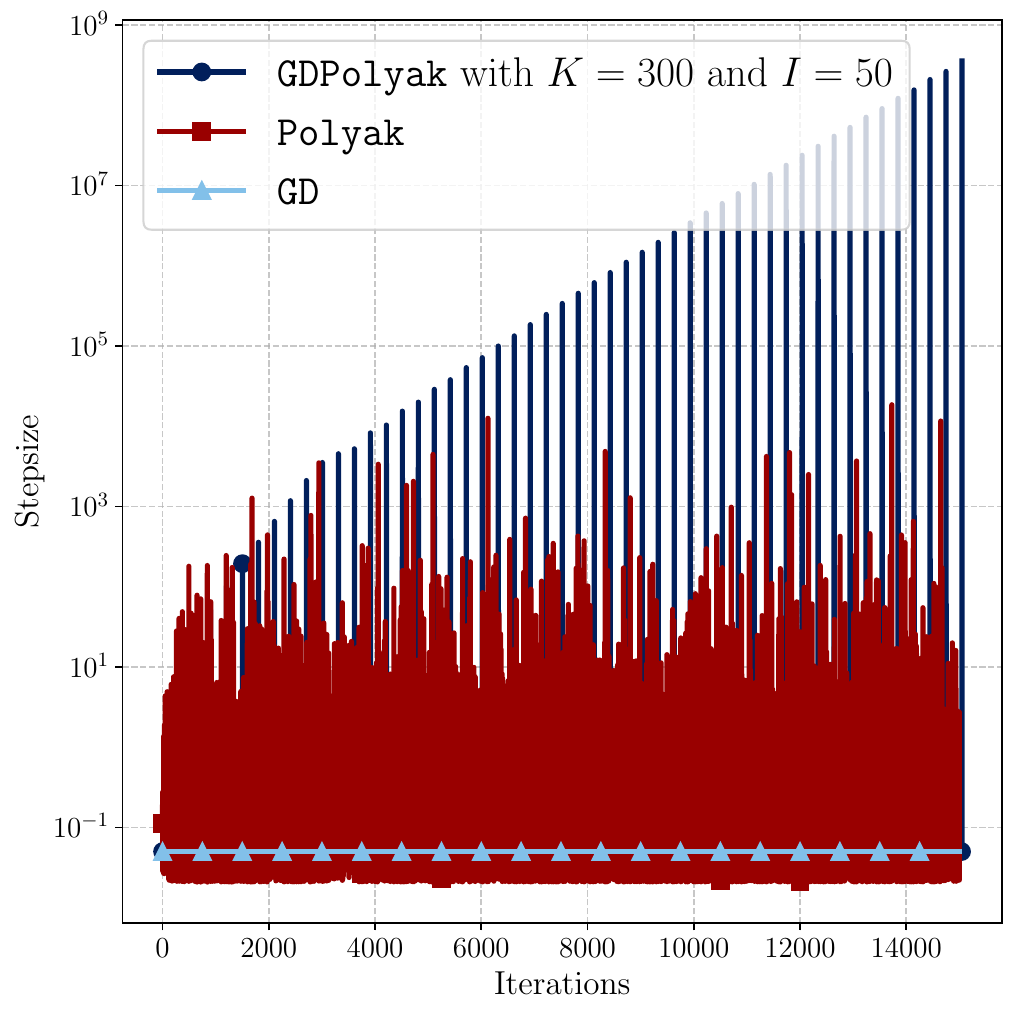} 
			\caption{Stepsize}
		\end{subfigure}
		
		\caption{Comparison of $\gdpk$ with $\gd$ and $\polyak$ on an overaparameterized quadratic matrix sensing problem. Each measurement matrix is of the form $A_i = a_ia_i^T - \tilde a_i \tilde a_i^T$ where $a_i$ and $\tilde a_i$ are $d$-dimensional standard Gaussians.
			In this experiment, $d=100$, the unknown rank is $r = 2$, and the overparameterized rank is $k = 4$. 
			For $\gdpk$, we run the method for $I=50$ epochs of size $K = 300$. 
			In each epoch, $\gdpk$ uses constant stepsize $.05$.}
		\label{fig:gaussian_sensing}
	\end{figure}
	Low-rank matrix sensing problems appear in a wide variety of applications, such as quantum state tomography, image processing, multi-task regression, and metric embeddings \cite{recht2010guaranteed,candes2011tight,liu2011universal,flammia2012quantum,chi2019nonconvex}. The goal is to recover a symmetric positive semidefinite matrix $X \in \RR^{d\times d}$ with low rank $r \ll d$ from a set of linear measurements $y_i = \dotp{A_i, X}$, where $A_i \in \RR^{d\times d}$ are known matrices. A popular approach (e.g., \cite{chi2019nonconvex}) is to form a low-rank factorization of the matrix $X=BB^\top$ and optimize the mean-square-error over the factors:
	\begin{equation}\label{eqn: matrixsensingobjintro}
		\min_{B\in \R^{d\times k}}f(B) = \frac{1}{4m}\sum_{i=1}^{m} (y_i - \dotp{A_i, BB^\top})^2.
	\end{equation}
	This factorized approach was pioneered by Burer-Monteiro in~\cite{burer2003nonlinear,burer2005local}. If the true rank $r$ of the underlying matrix $X$ is known, then $k=r$ is the ideal choice. In this exact regime, and under a ``restricted isometry property," the objective function $f$ grows quadratically away from its solution set  \cite{tu2016low,ge2017no,zhu2018global}. The rank $r$, however, is rarely known exactly, and instead, one resorts to an overestimated $k>r$. In this so-called rank-overparameterized regime, the objective function $f$ only grows quartically from the solution set, and therefore, standard gradient methods converge sublinearly at best~\cite{zhuo2021computational}. It is in this rank overparametrized regime that all of our results apply and gradient descent with  adaptive stepsize converges at a local (nearly) linear rate.

	As a numerical illustration, Figure~\ref{fig:gaussian_sensing} compares the performance of $\gdpk$, gradient descent with constant stepsize $.05$ ($\gd$), and gradient descent with Polyak stepsize ($\polyak$) on a matrix factorization problem.
	Again, the figures show that $\gd$ and $\polyak$ converge sublinearly, while $\gdpk$ converges nearly linearly. 
	In addition, the long stepsizes taken by $\gdpk$ grow exponentially in the iteration counter.

	Let us briefly take a closer look at an idealized version of the problem \eqref{eqn: matrixsensingobjintro} where the measurement operator is the identity: 
	\begin{align}\label{eqn: matrixfactorizationintro}
		\min_{B\in \RR^{d\times k}}~f(B) = \|BB^\top -X \|_F^2,
	\end{align}
	In this case, a ravine takes a straightforward form. Namely, assume without loss of generality that
	$X=
	\begin{pmatrix}
		D & 0 \\
		0 & 0
	\end{pmatrix},
	$ where $D \in \RR^{r\times r}$ is a diagonal matrix with positive diagonal elements and write variable $B$ in block form $B = \begin{pmatrix}
		P^\top & Q^\top
	\end{pmatrix}^\top$, where $P \in \RR^{r\times k}$ and $Q \in \RR^{(d-r)\times k}$.
	Then we will show that the set of minimizers $S$ of \eqref{eqn: matrixfactorizationintro} and a ravine $\cM$  are simply 
	$$S = \left\{\begin{pmatrix}
		P\\
		Q
	\end{pmatrix} \colon PP^\top = D, Q=0 \right\},\qquad\cM = \left\{\begin{pmatrix}
		P\\
		Q
	\end{pmatrix} \colon PP^\top =D, PQ^\top = 0 \right\}.$$ 
	For example, in the rank one setting $X=e_1e_1^\top$ with $d=k=2$, the ravine and the solution set are diffeomorphic to a cylinder $\cM\simeq \mathbb{S}^1\times \mathbb{R}$ and a circle $S\simeq\mathbb{S}^1\times \{0\}$, respectively.

	\paragraph{Learning a single neuron.}
	As the second application of our techniques, we consider learning a single neuron in the overparametrized regime. That is, following \cite{xu2023over}, we focus on the problem 
	$$
	\min_{w}~ f(w) = \EE_{x\sim N(0,I)}\left[ \frac{1}{2}\left(\sum_{i=1}^{n}[w_i^\top x]_+ - [v^\top x]_+ \right)^2\right],
	$$
	where $w = (w_1^\top, w_2^\top,\ldots, w_n^\top)^\top \in \RR^{n\times d}$ denotes the decision variable. Variants of this problem have also been studied in~\cite{tian2017analytical,brutzkus2017globally, yehudai2020learning,du2017convolutional,wu2018no}. In the exact parameterization regime ($n=1$), gradient descent is shown to converge at a linear rate~\cite[Theorem 5.3]{yehudai2020learning}. In contrast, in the overparametrized regime ($n\geq 2$), the objective function $f(w)$ only grows cubicly away from the solution set, and gradient descent converges at a sublinear rate \cite{xu2023over}. 
	In this work, we focus on the simplest overparameterized setting $n=2$ and show that our results apply. In particular, gradient descent with an adaptive stepsize converges at a nearly linear rate. This result suggests that when training neural networks in the mildly overparameterized regime, the adaptive choice of the stepsize can exponentially speed up convergence.

	Figure~\ref{fig:neuralnetwork} compares the performance of $\gdpk$, gradient descent with constant stepsize $1.5$ ($\gd$), and gradient descent with Polyak stepsize ($\polyak$).
	As in the previous two examples, the plots show that $\gd$ and $\polyak$ converge sublinearly, while $\gdpk$ converges nearly linearly, and the long steps taken by $\gdpk$ grow exponentially with the iterations.
	
	\begin{figure}[H]
		\centering
		\begin{subfigure}[b]{0.325\textwidth}
			\centering
			\includegraphics[width=\textwidth]{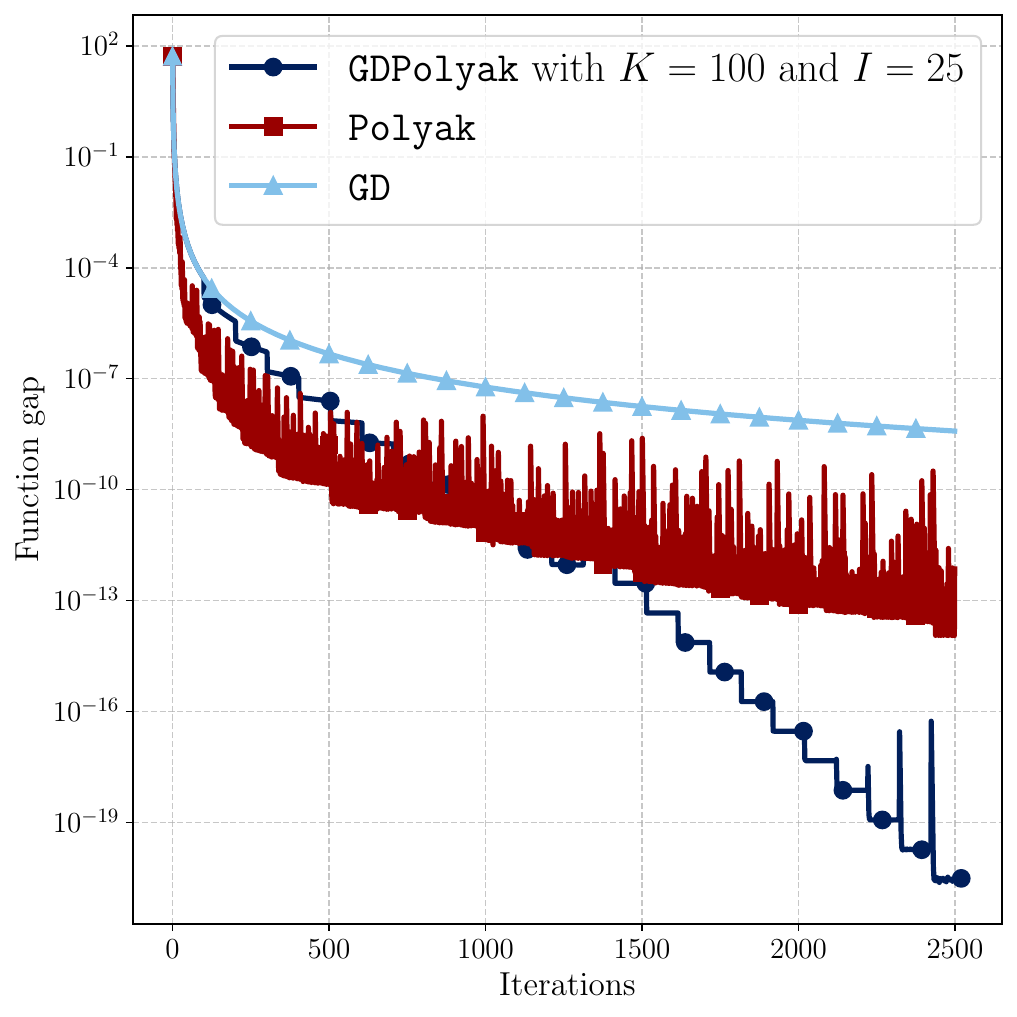} 
			\caption{Function gap}
		\end{subfigure}
		\hfill
		\begin{subfigure}[b]{0.325\textwidth}
			\centering
			\includegraphics[width=\textwidth]{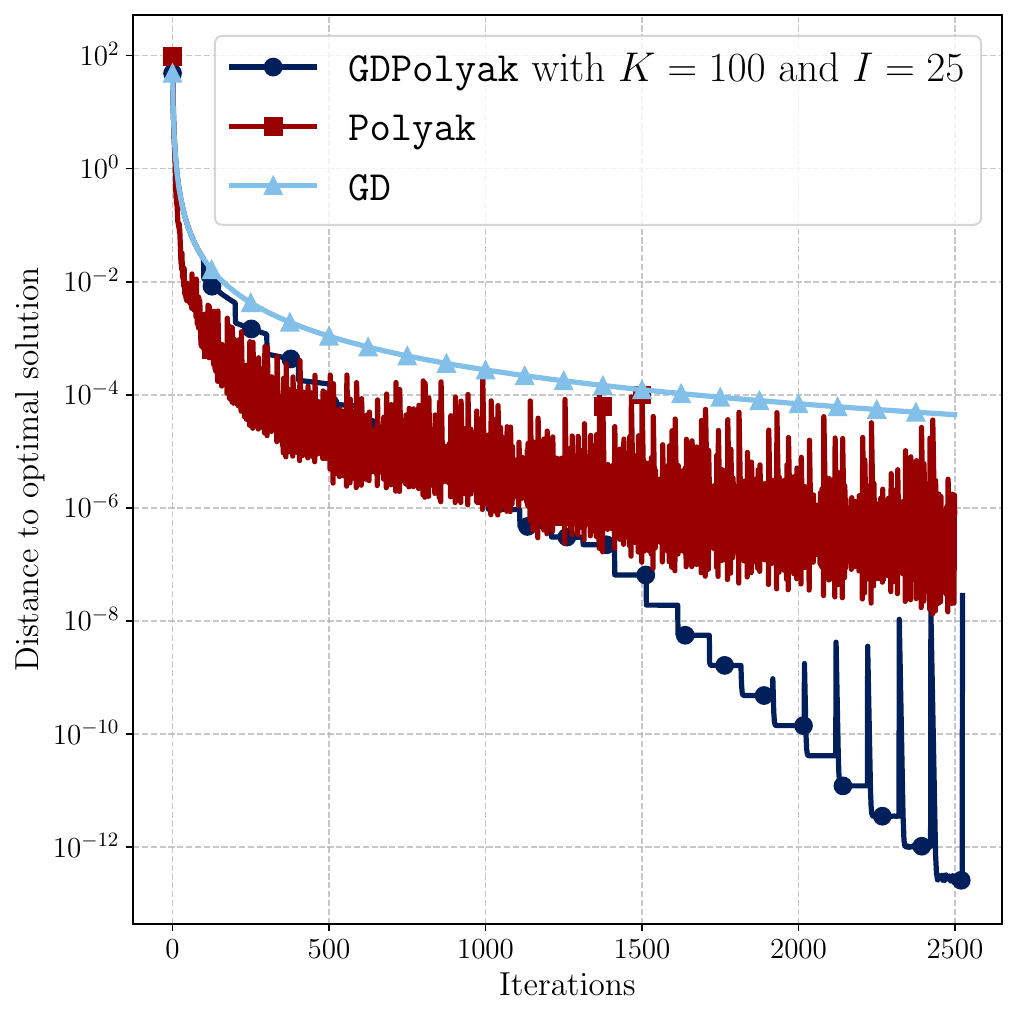} 
			\caption{Distance to optimal solution}
		\end{subfigure}
		\hfill
		\begin{subfigure}[b]{0.325\textwidth}
			\centering
			\includegraphics[width=\textwidth]{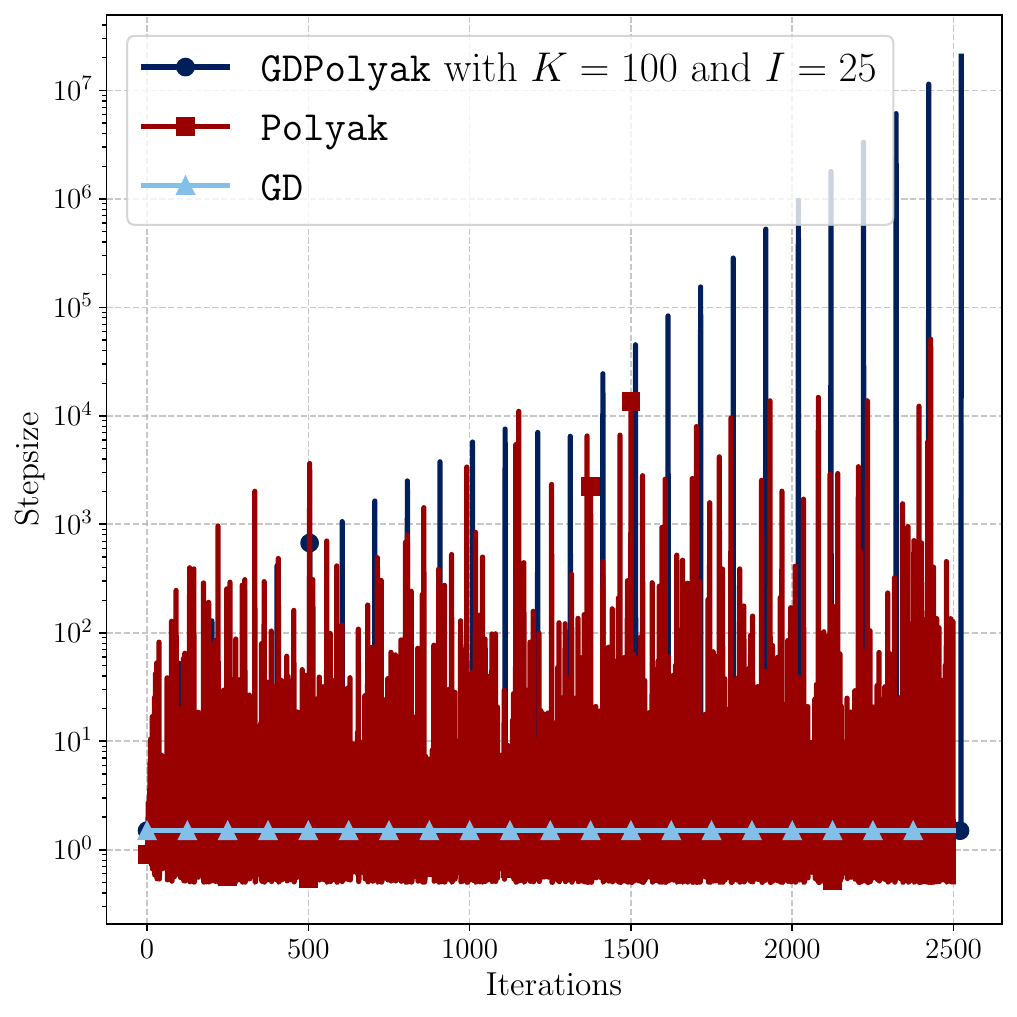} 
			\caption{Stepsize}
		\end{subfigure}
		
		\caption{Comparison of $\gdpk$ with $\gd$ and $\polyak$ for the problem of learning a single neuron in the overparameterized regime. In the experiment, we set $d=100$ and $n = 2$.
			For $\gdpk$, we run the method for $I=50$ epochs of size $K = 100$. 
			In each epoch, $\gdpk$ uses constant stepsize $1.5$. We note that since it is difficult to compute the exact distance to the set of minimizers of $S$ (defined explicitly in~\eqref{eq:solutionsetdu}), we instead compute a penalty, which can be shown to be proportional to $\dist(x_k, S)$. }
		\label{fig:neuralnetwork}
	\end{figure}

	\subsection{Related literature.}
	
	\paragraph{Ravines, partial smoothness, and local linear convergence.}
	The notion of a ravine introduced in this work nicely connects to nonsmooth optimization. Indeed, it has been classically known that critical points of typical nonsmooth functions lie on a certain manifold that captures the activity of the problem in the sense that critical points of slight linear tilts of the function do not leave the manifold. Such manifolds have been modeled in a variety of ways, including identifiable surfaces \cite{wright1993identifiable}, partial smoothness~\cite{lewis2002active}, $\mathcal{UV}$-structures \cite{lemarecha2000,mifflin2005algorithm}, $g\circ F$ decomposable functions \cite{shapiroreducible}, minimal identifiable sets \cite{drusvyatskiy2014optimality}, and active manifolds~\cite{davis2022proximal, davis2021active,davis2023asymptotic,davis2024local}. Roughly speaking, the active manifold at a minimizer is a manifold such that the function grows rapidly (linearly) away from it and varies smoothly along it. In this sense, the ravine can be understood as a higher-order active manifold for smooth optimization. The gradient method with adaptive stepsize conceptually resembles the more sophisticated Normal Tangent Descent (NTD) algorithm for nonsmooth optimization in~\cite{davis2024local}. When the nonsmooth function has exactly quadratic growth along the active manifold---a generic property for typical nonsmooth functions~\cite{drusvyatskiy2016generic}---NTD converges at a local nearly linear rate. A key feature of NTD is the switching between constant and diminishing step sizes, determined by the ratio of the distance to the active manifold and the distance to the minimizer.

	\paragraph{Overparameterized matrix sensing}
	Over the past decade, low-rank matrix sensing has been the subject of extensive study. The factorized approach, initially introduced by Burer and Monteiro in~\cite{burer2003nonlinear,burer2005local}, has been widely studied in recent work on matrix sensing (e.g., \cite{chi2019nonconvex,chen2015fast,bhojanapalli2016global,tu2016low,ge2017no,zhu2018global,li2019non, li2020nonconvex, charisopoulos2021low,zhang2021general,ma2023geometric, chandrasekher2022alternating}) and its ``population version," matrix factorization~\cite{ye2021global,josz2022nonsmooth}. When the exact target rank $k=r$ is known, the objective function \eqref{eqn: matrixsensingobjintro} has a benign optimization landscape \cite{ge2017no,zhu2018global} and simple gradient-based methods can find the ground-truth matrix with a statistical error that is minimax optimal up to log factors \cite{chen2015fast,bhojanapalli2016global,chi2019nonconvex} with a local linear rate of convergence. That being said, the ground-truth rank $r$ is usually \emph{unknown a priori}. To ensure recovery, one may choose a relatively large rank $k>r$. Recent studies have shown that in this rank-overparameterized setting, gradient descent with a constant stepsize can still find the ground truth up to a statistical error, but the local rate of convergence is only sublinear due to poor local geometry (lower growth) caused by overparametrization~\cite{zhuo2021computational,ding2021rank}. 
	
	Several papers have proposed ways to accelerate the convergence of gradient descent. For example, it has been shown that gradient descent with small initialization and early stopping only requires $O(\log(1/\epsilon))$ iterations to achieve an $\epsilon$ solution for overparameterized matrix factorization~\cite{jiang2022algorithmic, wind2023asymmetric} and matrix sensing~\cite{li2018algorithmic, stoger2021small,ma2023global,ding2022validation, jin2023understanding, xu2023power, soltanolkotabi2023implicit, maros2024decentralized}. That being said, the final error  depends on the initialization scale, and linear convergence is no longer observed after a certain number of iterations, which depends on the initialization size. The only exception in this line of work is~\cite{xiong2023over}, where one can observe indefinite linear convergence for the asymmetric variant of the problem and small initialization. The linear rate of convergence, however, depends on the initialization size and can be extremely slow when the initialization is small. A parallel line of work  leverages the specific structure of the problem in order to design methods that go beyond gradient descent. The work~\cite{ward2023convergence} shows that, by setting the stepsize according to the final accuracy $\epsilon$, alternating minimization only requires $O(\log(1/\epsilon))$ iterations to achieve an $\epsilon$-optimal solution for overparametrized matrix factorization. Interestingly, the works~\cite{zhang2022preconditioned,zhang2024fast} show that when using the preconditioner $B_t^\top B_t +\lambda_t I$ for suitable choices of $\lambda_t$, one can obtain local linear convergence for overparameterized matrix sensing. Moreover, the convergence rate is independent of the condition number of the ground truth matrix.
	
	\paragraph{Gradient descent with alternating short and long steps.}
	It has been observed that nonconstant stepsize schedules can improve the convergence of gradient descent. We review the literature on this subject, which has received renewed interest. In 1953, Young~\cite{young1953richardson} showed that one can achieve an optimal accelerated convergence rate for minimizing strongly convex quadratic functions when applying gradient descent with stepsizes dictated by the roots of Chebyshev polynomials. More recently, the work~\cite[Chap.\ 8]{altschuler2018greed} showed that alternating between short and long stepsizes achieves a faster rate for optimizing smooth, strongly convex functions. 
	In addition, \cite[Chap.\ 6]{altschuler2018greed} showed that certain random stepsizes achieve the optimal rate of convergence $O(\sqrt{\kappa}\log(1/\epsilon))$ for separable smooth strongly convex functions, where $\kappa$ is the condition number of the function.
	Another related work~\cite{oymak2021provable} shows a faster convergence rate when using nonconstant stepsize for smooth, strongly convex functions with bimodal structured Hessians. The paper \cite{kelner2022big} shows that for multi-scale strongly convex functions, which take the form of a sum of multiple non-interacting, smooth, and strongly convex functions, a recursive long-step short-step schedule for gradient descent can significantly improve dependence of the convergence rate on the condition number of the objective function. 
	
	The recent work \cite{grimmer2024provably} shows that alternating between short and long stepsizes also improves convergence rates for smooth convex functions, which are not necessarily strongly convex. 
	The subsequent works~\cite{altschuler2023acceleration, altschuler2023accelerationii} show that by applying the so-called ``silver stepsize schedule," gradient descent finds an $\epsilon$-minimizer after $O(\kappa^{\log_{1+\sqrt{2}}(2)}\log(1/\epsilon))$ and $O(\epsilon^{-\log_{1+\sqrt{2}}(2)})$ iterations for smooth strongly convex and smooth convex functions, respectively.  Finally, the subsequent work \cite{grimmer2024accelerated} shows that with a stepsize schedule similar to that of~\cite{altschuler2023acceleration, altschuler2023accelerationii}, gradient descent has the same convergence rate in function value and gradient norm for smooth convex functions.

	\section{Notation and preliminaries}
	Throughout, $\E$ will denote a Euclidean space equipped with an inner product $\langle \cdot,\cdot\rangle$ and the induced Euclidean norm $\|x\|=\sqrt{\langle x,x\rangle}$. The symbol $\mathbb{S}$ will denote the unit sphere in $\E$. For us the two main examples will be the Euclidean space of real $n$-dimensional vectors endowed with the usual dot-product and the $\ell_2$-norm and the space of real $n\times k$ matrices $\R^{n\times k}$ endowed with the trace inner product $\langle X,Y\rangle=\tr(X^\top Y)$ and the induced Frobenius norm $\|X\|_F=\sqrt{\tr(X^2)}$. The Euclidean subspace of $n\times n$ symmetric matrices will be written as $\mathcal{S}^{n}$, while the symbol $\mathcal{S}^{n}_+$ will denote the cone of $n\times n$ positive semidefinite matrices. The distance and the nearest point projection to any set $Q\subset\E$ are defined, respectively, as
	$$\dist(x,Q)=\inf_{y\in Q} \|y-x\|\qquad \textrm{and}\qquad\proj_{Q}(x)=\argmin_{y\in Q} \|y-x\|.$$
	
	We make heavy use of the $O(\cdot), \Theta(\cdot), o(\cdot)$ notation throughout this work. In particular, fix a subset $\Omega \subseteq \E$, a basepoint $\bar x \in \Omega$, and functions $g \colon \Omega \rightarrow \RR$ and $h \colon \Omega \rightarrow \R$. Then we say that $g(x) = O(h(x))$  for $x$ near  $\bar x$ if and only if on some neighborhood $U\subseteq \E$ of $\bar x$, there exists $c \geq 0$ such that $|g(x)| \leq c |h(x)|$ for all $x \in U\cap \Omega$. Next, we use the symbol $o(1)$ as $x$ tends to $\bar x$ to denote any function (positive or negative) that tends to $0$ as $x$ tends to $\bar x$. Finally, we use the symbol $\Theta(1)$ as $x$ tends to $\bar x$ to denote any function that is bounded between two fixed constants $0< c < C < \infty$ as $x$ tends to $\bar x$.
	For brevity, we often drop the phrase ``as $x$ tends to $\bar x$," when the limiting point is clear from context.

	Throughout the paper, we use the standard machinery of smooth manifolds as set out, for example, in the monographs \cite{lee2013smooth} and \cite{boumal2020introduction}. In particular, all smooth manifolds $\cM$ that we will consider are embedded in some Euclidean space $\E$, and we regard the tangent and normal spaces to $\cM$ as subspaces of $\E$. Thus a set $\cM\subset \E$ is a $C^p$-smooth manifold ($p\geq 1$) if for any point $x\in \cM$ there exists an open neighborhood $U\subset \E$ and a $C^p$-smooth map $F\colon U\to {\bf Y}$ mapping into some Euclidean space ${\bf Y}$ such  that the Jacobian $\nabla F(x)$ is surjective and equality $\cM\cap U=F^{-1}(0)$ holds. Then the tangent and normal spaces to $\cM$ at $x$ are defined simply as $T_{\cM}(x):=\Null(\nabla F(x))$ and $N_{\cM}(x):=(T_{\cM}(x))^{\perp}$, respectively. We note that on any neighborhood of a point $\bar x$ in a $C^2$-smooth manifold $\cM$, the projection $y=P_{\cM}(x)$ is characterized by the inclusion $x-y\in N_{\cM}(y).$ In particular, the function $P_{\cM}(x+t(y-x))=y$ is constant for all small $t$ and therefore the equality holds:
	\begin{equation}\label{eqn:proj_null_needed}
		\nabla P_{\cM}(x)(y-x)=0
	\end{equation}
	We will  use the following two elementary facts, which we record here for ease of reference. 
	\begin{proposition}[Range] \label{prop:range}
		For any smooth map $R\colon U\to \cM$ from an open set $U\subset\E$ to a smooth manifold $\cM$, the inclusion ${\rm Range}(\nabla R(x))\subset T_{\cM}(R(x))$ holds for all points $x\in U$.
	\end{proposition}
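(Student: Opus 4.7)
The plan is to prove the inclusion via a direct computation using either a local defining function for $\cM$ or, equivalently, via curves in $\cM$. Both routes are essentially one-line arguments, so the ``main obstacle'' is really just to check that the chain rule is being applied legitimately. I would likely present the defining-function argument since it dovetails with the definition of $T_\cM$ used in the preliminaries.

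More precisely, fix $x \in U$ and set $y := R(x) \in \cM$. By the definition of a smooth manifold recalled just above the proposition, there exists an open neighborhood $V \subset \E$ of $y$ and a smooth map $F\colon V \to \mathbf{Y}$ into some Euclidean space such that $\nabla F(y)$ is surjective, $\cM \cap V = F^{-1}(0)$, and $T_\cM(y) = \Null(\nabla F(y))$. Since $R$ is continuous, I may shrink $U$ to a smaller open neighborhood $U'$ of $x$ on which $R(U') \subset V$. Then the composition $F \circ R\colon U' \to \mathbf{Y}$ is well defined and smooth, and since $R$ lands in $\cM \cap V$, the identity $F(R(x')) = 0$ holds for every $x' \in U'$.

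The key step is to differentiate this identity at $x$ using the chain rule, which gives $\nabla F(y) \cdot \nabla R(x) = 0$ as a linear map on $\E$. Therefore for every $v \in \E$, the vector $\nabla R(x) v$ lies in $\Null(\nabla F(y)) = T_\cM(y) = T_\cM(R(x))$, proving the desired inclusion. This works verbatim at every point $x \in U$, since $x$ was arbitrary. The only delicate point, which is why I would spell out the neighborhood shrinking, is that the local defining function $F$ is only guaranteed to exist on a neighborhood of $R(x)$, not on all of $\cM$; once this is handled, the chain rule finishes the argument.
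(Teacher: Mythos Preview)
Your argument is correct and is exactly the standard proof via a local defining function and the chain rule. The paper itself records this proposition as an elementary fact without proof, so there is nothing to compare; your write-up would serve as a complete justification.
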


	\begin{proposition}[Tangents]\label{prop:tangent_close}
		Let $\cM$ be a $C^2$-smooth manifold around a point $\bar x$. Then the estimate, 
		$\dist\left(\tfrac{y-x}{\|y-x\|},T_{\cM}(x)\cap \mathbb{S}\right)=O(\|y-x\|)$,
		holds for all $x,y\in \cM$ near $\bar x$.
	\end{proposition}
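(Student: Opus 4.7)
My plan is to exploit the implicit description of the manifold by a defining equation and then Taylor expand. By hypothesis, there is a neighborhood $U$ of $\bar x$ and a $C^2$-smooth map $F\colon U\to \mathbf{Y}$ with surjective Jacobian on $U$ such that $\cM\cap U=F^{-1}(0)$, and $T_{\cM}(z)=\Null(\nabla F(z))$ for every $z\in\cM\cap U$. Since $\nabla F$ is continuous and surjective at $\bar x$, after possibly shrinking $U$, the restriction $\nabla F(z)\big|_{N_{\cM}(z)}\colon N_{\cM}(z)\to\mathbf{Y}$ is a linear isomorphism whose inverse is bounded uniformly in $z\in \cM\cap U$ by some constant $c>0$.

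For any pair $x,y\in \cM\cap U$, the equality $F(y)=F(x)=0$ together with a first order Taylor expansion of $F$ around $x$ yields
\begin{equation*}
\|\nabla F(x)(y-x)\|=\|F(y)-F(x)-\nabla F(x)(y-x)\|=O(\|y-x\|^2).
\end{equation*}
Now decompose $y-x=v_T+v_N$ with $v_T\in T_{\cM}(x)$ and $v_N\in N_{\cM}(x)$. Since $v_T\in \Null(\nabla F(x))$, we get $\nabla F(x)v_N=\nabla F(x)(y-x)$, and the uniform bound on the inverse of $\nabla F(x)|_{N_{\cM}(x)}$ gives
\begin{equation*}
\|v_N\|\le c\,\|\nabla F(x)(y-x)\|=O(\|y-x\|^2).
\end{equation*}

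It remains to upgrade this into a bound on the set distance. The unit vector $u:=(y-x)/\|y-x\|$ has tangent component $u_T:=v_T/\|y-x\|$ and normal component $u_N:=v_N/\|y-x\|$ with $\|u_N\|=O(\|y-x\|)$. For $y$ close enough to $x$ the tangent part $u_T$ is nonzero, so $u_T/\|u_T\|\in T_{\cM}(x)\cap\mathbb{S}$ is a legitimate candidate, and a direct calculation using $\|u_T\|^2+\|u_N\|^2=1$ gives
\begin{equation*}
\left\|u-\frac{u_T}{\|u_T\|}\right\|^2=(1-\|u_T\|)^2+\|u_N\|^2 \le \|u_N\|^4+\|u_N\|^2 \le 2\|u_N\|^2.
\end{equation*}
Combining, $\dist(u,T_{\cM}(x)\cap\mathbb{S})\le \sqrt{2}\,\|u_N\|=O(\|y-x\|)$, as desired.

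I do not expect any genuine obstacle here; the only mild subtlety is the uniform invertibility of $\nabla F(z)|_{N_{\cM}(z)}$ in a neighborhood of $\bar x$, which follows from continuity of $z\mapsto \nabla F(z)$, continuity of the normal space (since $\cM$ is $C^2$), and surjectivity at $\bar x$, after shrinking $U$ if necessary.
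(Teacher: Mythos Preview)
Your proof is correct. The paper states this proposition as an elementary preliminary fact and does not supply its own proof, so there is nothing to compare against; your argument via the local defining map $F$, the Taylor remainder bound $\|\nabla F(x)(y-x)\|=O(\|y-x\|^2)$, and the uniform invertibility of $\nabla F(x)|_{N_{\cM}(x)}$ is a clean and standard way to establish the claim.
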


	We will often encounter maps from a Euclidean space $\E$ to a submanifold $\cM\subset \E$. The following definition isolates a particularly nice type of such maps, called retractions. 
	\begin{definition}[Retraction] 
		{\rm  Let $\cM\subset \E$ be a $C^1$-smooth manifold around a point $\bar x$. Then a  {\em $C^p$-retraction onto $\cM$ around $\bar x$} is any 
			$C^p$-smooth map $R\colon U\to\cM$, defined on a neighborhood $U$ of $\bar x$,  satisfying $\nabla R(\bar x)=P_{T_{\cM}(\bar x)}$ and $R(x)=x$ for all $x\in \cM\cap U$.}
	\end{definition}

	Note that the definition requires the equality $\nabla R( x)=P_{T_{\cM}( x)}$ to hold only at $x=\bar x$. This is in contrast to the usual definition of a retraction in the optimization literature \cite[Section 3.6]{boumal2020introduction}, which requires this equality to hold for all $x\in \cM\cap \cU$. The extra flexibility, however, will be important in what follows. We will encounter two examples of retractions. First, if $\cM$ is a $C^{p+1}$-manifold, then the projection $P_{\cM}$ is a $C^p$-retraction around any point in $\cM$ \cite[Theorem 5.53]{boumal2020introduction}. Second, if $\cM\subset\E\times {\bf Y}$ can be written as a graph of some $C^p$-smooth map $F\colon \E\to{\bf Y}$ locally around $\bar x=(0,0)$ satisfying $F(0)=0$ and $\nabla F(0)=0$, then the map $R(v,u)=(v,F(v))$ is a $C^p$ retraction onto $\cM$ around $\bar x=0$. 
	
	All retractions can be understood as approximate projections in the following sense.
	\begin{proposition}[Retractions as approximate projections]\label{prop_dist_comp}
		Let $R(\cdot)$ be a $C^1$-retraction onto a $C^2$-smooth manifold $\cM$ at a point $\bar x$. Then the estimates hold:
		$$\|R(x)-P_{\cM}(x)\|=o(1)\cdot \dist(x,\cM)\qquad \textrm{as}~ x\to \bar x.$$
		In particular, we have $\|x-R(x)\|=\Theta(1)\cdot \dist(x,\cM)$ as $x$ tends to $\bar x$.
	\end{proposition}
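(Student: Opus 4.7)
The plan is to Taylor-expand the retraction $R$ around the projection $y:=\proj_\cM(x)$ and exploit the fact that $\nabla R(\bar x)=P_{T_\cM(\bar x)}$ annihilates normal directions at $\bar x$. For $x$ near $\bar x$, the projection $y$ is well-defined by $C^2$-smoothness of $\cM$, and $y\to \bar x$ as $x\to \bar x$. Since $y\in \cM$, the retraction condition gives $R(y)=y$, so a first-order Taylor expansion of the $C^1$ map $R$ at $y$ yields
\[
R(x)-\proj_\cM(x) \;=\; R(x)-y \;=\; \nabla R(y)(x-y)+o(\|x-y\|).
\]

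The main step is to show $\nabla R(y)(x-y)=o(\|x-y\|)$. Since $x-y\in N_\cM(y)$ by the optimality condition for the projection, it suffices to prove that the restriction $\nabla R(y)|_{N_\cM(y)}$ has operator norm $o(1)$ as $y\to \bar x$. I would argue this by combining two ingredients: (i) continuity of $\nabla R$, which gives $\nabla R(y)=P_{T_\cM(\bar x)}+o(1)$; and (ii) continuous variation of the normal spaces along the $C^2$ manifold $\cM$, which yields $\|P_{T_\cM(\bar x)}v\|=O(\|y-\bar x\|)$ uniformly over unit $v\in N_\cM(y)$. Combining (i) and (ii) gives $\|\nabla R(y)v\|=o(1)$ on unit $v\in N_\cM(y)$, so $\|\nabla R(y)(x-y)\|=o(\|x-y\|)=o(\dist(x,\cM))$, which is the first assertion.

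The main obstacle is quantifying ingredient (ii). One clean route is to work in a local coordinate chart in which $\cM$ is the graph of some $C^2$ map $F$ defined on $T_\cM(\bar x)$ with $F(0)=0$ and $\nabla F(0)=0$: then the normal space at each point of $\cM$ can be written explicitly in terms of $\nabla F$ and depends continuously on the base point, with limit $N_\cM(\bar x)$ at $\bar x$, so that the unit sphere of $N_\cM(y)$ lies within Hausdorff distance $O(\|y-\bar x\|)$ of that of $N_\cM(\bar x)$. Finally, the ``in particular'' statement follows from the triangle inequalities
\[
(1-o(1))\,\dist(x,\cM) \;\leq\; \|x-R(x)\| \;\leq\; (1+o(1))\,\dist(x,\cM),
\]
each obtained by bounding $\|x-R(x)\|$ against $\|x-\proj_\cM(x)\|\pm\|\proj_\cM(x)-R(x)\|$ and applying the first assertion.
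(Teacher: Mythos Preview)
Your proof is correct and follows essentially the same strategy as the paper: Taylor-expand $R$, use continuity of $\nabla R$ to replace the Jacobian by $P_{T_\cM(\bar x)}$, and then argue this projection nearly annihilates the normal direction $x-P_\cM(x)$. The only cosmetic difference is that the paper expands $R$ at $x$ rather than at $y=P_\cM(x)$, and for ingredient~(ii) it invokes the identity $\nabla P_\cM(x)(P_\cM(x)-x)=0$ together with continuity of $\nabla P_\cM$ (so $\|P_{T_\cM(\bar x)}-\nabla P_\cM(x)\|_{\rm op}=o(1)$) instead of your explicit local-chart argument for normal-space variation; both routes encode the same geometric fact and yield the same conclusion.
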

	\begin{proof}
		Since the retraction $R$ is $C^1$-smooth near $\bar x$ we estimate:
		\begin{align*}
			P_{\cM}(x)-R(x)=R(P_{\cM}(x))-R(x)&=\nabla R(x)(P_{\cM}(x)-x)+ o(1)\|P_{\cM}(x)-x\|.
		\end{align*}
		Using continuity of $\nabla R$ and the equality $\nabla R(\bar x)=P_{T_{\cM}(\bar x)}$, we compute
		\begin{align*}\nabla R(x)(P_{\cM}(x)-x)&=\nabla R(\bar x)(P_{\cM}(x)-x)+\underbrace{(\nabla R(x)-\nabla R(\bar x))(P_{\cM}(x)-x)}_{=o(1)\cdot \dist(x,\cM)}.
		\end{align*}
		We now estimate the first term on the right side.
		To this end, observe the estimate $$\|\nabla R(\bar x)-\nabla P_{\cM}(x)\|_{\rm op}=\opnorm{\nabla P_{\cM}(\bar x) - \nabla P_{\cM}(x)} =  o(1).$$
		We therefore deduce 
		$$\nabla R(\bar x)(P_{\cM}(x)-x)=o(1)\cdot \dist(x,\cM)+\underbrace{\nabla P_{\cM}(x)(P_{\cM}(x)-x)}_{=0},$$
		where the expression in the underbrace follows from the inclusion $P_{\cM}(x)-x\in N_{\cM}(x)$.
		This completes the proof.
	\end{proof}
	
	\section{Ravines: definition, existence, and examples}
	The nullspace of the Hessian $\nabla^2 f(\bar x)$ at a minimizer $\bar x$ defines a set of problematic directions for the constant stepsize gradient method. The key idea of our work is to focus on certain distinguished manifolds $\cM$ that are tangent to the nullspace at $\bar x$. We will then decompose $f$ into its tangent and normal parts
	$$f(x)=f_{N}(x)+f_{T}(x),$$
	where we define $f_N(x):=f(x)-f(P_{\cM}(x))$ and $f_T(x):=f(P_{\cM}(x))$. The idea is to analyze the behavior of gradient methods using the distinctive properties of $f_T$ and $f_N$. In particular, we impose conditions on $\cM$, which ensure that $f_N$ is well-controlled by the square distance $\dist^2(x,\cM)$, which ensures that constant step gradient descent rapidly approaches $\cM$ up to a well-specified error. The following is the key definition of the paper.
	
	\begin{definition}[Ravine]\label{defn:ravine_g}{\rm 
			Consider a $C^{p}$-smooth function $f\colon\E\to\R$ $(p\geq 2)$ and let $\bar x$ be a minimizer of $f$. We say that a $C^{p}$-smooth manifold $\cM$ is a {\em $C^p$-ravine at $\bar x\in \cM$} if it satisfies the equality $T_{\cM}(\bar x)={\rm Null}(\nabla^2 f(\bar x))$ and there exists a $C^{p}$-smooth retraction $R\colon U\to \cM$ around $\bar x$ and a constant $C_{\rm lb}>0$ satisfying  
			\begin{equation}\label{eqn:def_prop_ravine}
				f(x)-f(R(x))\geq C_{\rm lb}\cdot \|x-R(x)\|^2\qquad \forall x\in U.
			\end{equation}
		}
	\end{definition}

	Note that since the function $f-f\circ R$ is $C^2$-smooth and is minimized by points in $\cM$ (due to \eqref{eqn:def_prop_ravine}), the reverse inequality $f(x)-f(R(x))=O(1)\cdot \|x-R(x)\|^2$ holds automatically near $\bar x$.
	As is readily seen from Figure~\ref{fig:combined_intro}, the ravine is geometrically distinctive because the function appears to have a valley along $\cM$. The ideal retraction would be the projection $P_{\cM}$ itself. Using more general retraction provides much greater flexibility. In particular (and surprisingly), any smooth function admits a ravine. This follows from the so-called Morse lemma with parameters \cite[Lemma C.6.1]{hormander2007analysis}. Indeed, there is one ravine---called the Morse ravine---that is canonically defined. We define it here in the case when $\bar x$ is zero for simplicity; the general case follows by considering the function $g(x)=f(x-\bar x)$. 
	
	\begin{definition}[Morse ravine]
		{\rm Consider a $C^p$-smooth function $f\colon\E\to\R$ {\color{blue} $(p\ge 2)$} and let $\bar x=0$ be a critical point of $f$. 
			Then the {\em Morse ravine of }$f$ {\em at} $\bar x$ is the set 
			$$\cM:=\{(u,v)\in \mathcal{T}\times \mathcal{T}^{\perp}: \nabla_v f(u,v)=0\},$$
			where $\cT={\rm Null}(\nabla^2 f(\bar x))$ denotes the nullspace of the Hessian.}
	\end{definition}
	
	In words, the Morse ravine is traced out by the critical points of the function $f(u,\cdot)$ as $u$ varies in $\cT$. It is straightforward to see that the manifold $\cM=\{(x,y): y=x^2\}$ in Figure~\ref{fig:combined_intro} is indeed a Morse ravine. 
	
	\subsection{The Morse ravine is a ravine}
	We will now show that the Morse ravine is indeed a ravine in the sense of Definition~\ref{defn:ravine_g}. We begin by showing that the Morse ravine is always a smooth manifold. Indeed, this follows directly from the implicit function theorem. 
	
	\begin{lemma}[Smoothness of the Morse ravine]\label{lem:implicit_graph_rep}
		Let $\cM$ be a Morse ravine of a $C^p$-smooth $(p\geq 2)$ function $f$ at a critical point $\bar x$. Then locally around $\bar x$, the Morse ravine $\cM$ coincides with the graph of some $C^{p-1}$ smooth map $v\colon \cT\to \cT^{\perp}$, and therefore $\cM$ is a $C^{p-1}$-smooth manifold around $\bar x$. Moreover equalities, $\nabla v(0)=0$ and $T_{\cM}(\bar x)=\cT$, hold.
	\end{lemma}
	\begin{proof}
		Define the $C^{p-1}$-smooth map $F(v,u)=\nabla_v f(u,v)$. Clearly, the Jacobian $\nabla_v F(0,0)=\nabla^2_{vv} f(\bar x)$ is nonsingular on $\cT^{\perp}$. Therefore, the implicit function theorem implies that there exist open neighborhoods $V$ containing $v=0$ and $U$ containing $u=0$ such that for each $u\in U$ there is a unique point $v(u)\in V$ satisfying $F(u,v(u))=0$. Moreover, the implicit map $v(\cdot)$ thus defined is $C^{p-1}$ smooth and satisfies 
		$$\nabla v(0)=-\nabla_v F(0,0)^{-1}\nabla_u F(0,0)=-\nabla^2_{vv} f(\bar x)^{-1}\nabla^2_{vu} f(\bar x)=0.$$
		The last equality follows from the fact that in the coordinate system $\mathcal{T}\times \mathcal{T}^{\perp}$, the block $\nabla^2_{vu} f(\bar x)=0$ is zero. 
		In particular, we see that $\cM$ coincides with the graph of $v(\cdot)$ locally around $\bar x$. Consequently, $\cM$ is a $C^{p-1}$ smooth manifold and its tangent space at $\bar x$ is the graph of the trivial linear map $\nabla v(0) \colon u \rightarrow 0$, which is $\mathcal{T}\times\{0\}$. 
	\end{proof}
	
	The map $v(\cdot)$ in Lemma~\ref{lem:implicit_graph_rep} will be called the {\em graphical representation of $\cM$}. Next, it remains to establish the defining property \eqref{eqn:def_prop_ravine} for the retraction $$R(u,v)=(u,v(u))$$ at $\bar x$. This follows directly from the Morse lemma with parameters~\cite[Lemma C.6.1]{hormander2007analysis}.
	
	\begin{lemma}[Morse lemma with parameters]\label{lem: Morse}
		Let $\cM$ be the Morse ravine of a $C^{p}$-smooth function $f$ {\color{blue}$(p \ge 2)$} at a minimizer $\bar x=0$, and let $v\colon \cT\to \cT^{\perp}$ be a graphical representation of $\cM$. Then the equation holds:
		$$
		f(u,v) = f(u,v(u)) + \tfrac{1}{2}\dotp{\nabla^2_{vv} f(\bar x)w,w},
		$$ 
		where $w = v - v(u) + O(\|v - v(u)\| (\|u\|+ \|v\|))$ is a $C^{p-2}$-smooth function of $(u,v)$ at $(0,0)$.
	\end{lemma}

	The fact that the Morse ravine is a ravine is now immediate.
	
	\begin{corollary}[Existence of Morse ravine]\label{cor:morse_ravin_exist}
		The Morse ravine of a $C^p$-smooth function $f$ {\color{blue} $(p\ge 2)$} at a  minimizer $\bar x=0$ is a $C^{p-1}$-ravine of $f$ at $\bar x$.
	\end{corollary}
	\begin{proof}
		Lemma~\ref{lem:implicit_graph_rep} showed that the Morse ravine $\cM$ of $f$ at $\bar x=0$ is a $C^{p-1}$-smooth manifold around $\bar x$ with $T_{\cM}(\bar x)={\rm Null}(\nabla^2 f(\bar x))$. We let $v(\cdot)$ be the graphical representation of $\cM$ and define the map $R(u,v)=(u,v(u))$. Clearly $R$ is a $C^{p-1}$ retraction onto $\cM$ at $\bar x$.  Setting $\Delta=x-R(x)$, Lemma~\ref{lem: Morse}  implies 
		$$f(x)-f(R(x))=\tfrac{1}{2}\dotp{\nabla^2_{vv} f(\bar x)\Delta,\Delta} +o(1)\cdot \|\Delta\|^2.$$
		Note that $\Delta$ lies in $\cT^{\perp}$.
		Taking into account that $\nabla^2_{vv} f(\bar x)$ is nonsingular on $\mathcal{T}^{\perp}={\rm Range}(\nabla^2 f(\bar x))$, we deduce the estimates $c_1\|\Delta\|^2\leq \tfrac{1}{2}\dotp{\nabla^2_{vv} f(\bar x)\Delta,\Delta}\leq c_2\|\Delta\|^2$ for some constants $c_1,c_2>0$. Thus the proof is complete.
	\end{proof}
	
	It is worth noting that ravines are not unique, and the Morse ravine is just one possibility. For example, consider the function 
	$$f(z)=(\|z\|-1)^2+\left\|\tfrac{z}{\|z\|}-e_2\right\|^4,$$
	with $z\in\R^2$ and $e_2=(0,1)$. One can show that the unit circle $\cM_0=\{z: \|z\|=1\}$ is a ravine for $f$ at $\bar x=(0,1)$. On the other hand, a quick computation shows that the Morse ravine $\cM_1$ consists of all points $z=(x,y)$ satisfying the nonlinear equation:
	$$\|z\|^6y -y \|z\|^5 -x^2\|z\|^3+ (yx^2+2x^2y)\|z\|^2+(x^4-2x^2y^2)\|z\|=2x^4y.$$
	Indeed, the two sets $\cM_0$ and $\cM_1$ intersect only at $\bar x$; see Figure~\ref{fig:comp_ravine} for an illustration. Fortunately, this nonuniqueness will cause no issues for our adaptive gradient descent algorithm, since we will only utilize the existence of a ravine.

	\begin{figure}[H]
		\centering
		\begin{subfigure}[b]{0.55\textwidth}
			\includegraphics[width=\textwidth]{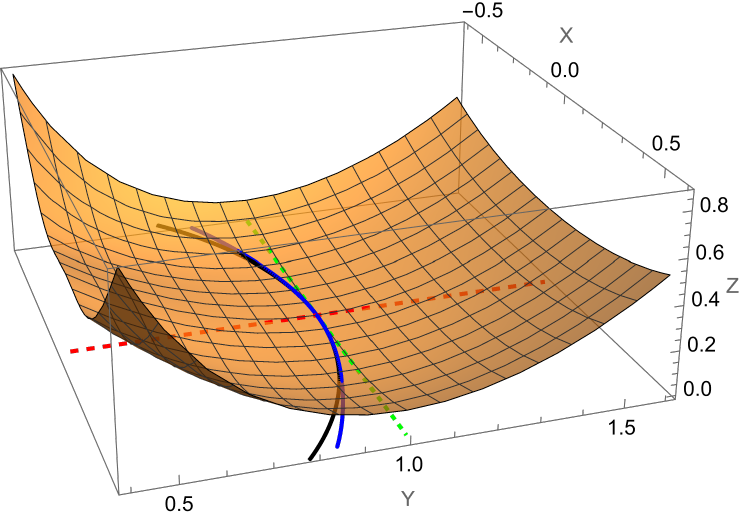}
			\caption{ravine $\cM_0$ (black) differs from the Morse ravine $\cM_1$ (blue); the tangent and normal space are depicted in green and red, respectively.}
			
		\end{subfigure}
		\hfill
		\begin{subfigure}[b]{0.4\textwidth}
			\includegraphics[width=\textwidth]{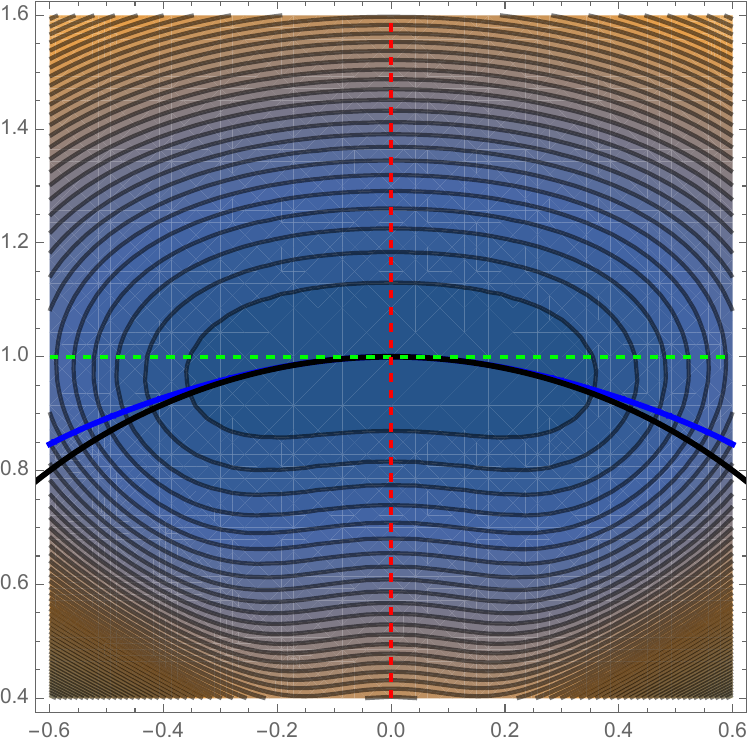}
			\caption{Contour plot}
			\begin{minipage}{.1cm}
				\vfill
			\end{minipage}
		\end{subfigure}
			\caption{The function $f(z)=(\|z\|-1)^2+\left\|\tfrac{z}{\|z\|}-e_2\right\|^4$ for $z\in \R^2$.}
			\label{fig:comp_ravine}
		\end{figure}
		\subsection{Constant rank, uniform ravines, and lower growth}

		When $f$  has multiple minimizers a certain technicality arises since a ravine at a given minimizer may not be a ravine at nearby minimizers. 
		Indeed, this makes the choice of the decomposition $f(x) = f_N(x) + f_T(x)$ ambiguous.
		Nonuniformity of the ravine already arises for the simple quartic function $f(x,y) = x^2y^2$. 
		At the origin, it has a unique ravine -- $\R^2$ -- while at all nearby minimizers, its ravines are one-dimensional. 
		Luckily, we can prevent this possibility by a simple assumption: $\nabla^2 f$ has a constant rank along the solution set.

		\begin{proposition}[Uniform ravines]\label{prop: uniform_ravine}
			Let $\cM$ be a $C^{p}$-ravine $(p\ge 2)$ of a function $f$ at a minimizer $\bar x$ and let $S$ be the set of minimizers of $f$. Then all points $x\in S$ sufficiently close to $\bar x$ lie in $\cM$ and satisfy the inclusion:
			\begin{equation}\label{eqn:null_tang}
				{\rm Null}(\nabla^2 f(x))\subset T_{\cM}(x).
			\end{equation}
			In particular, if the Hessian $\nabla^2 f$ has constant rank on $S$ near $\bar x$, then $\cM$ is a $C^{p}$-ravine of $f$ at all points $x\in S$ near $\bar x$.
		\end{proposition}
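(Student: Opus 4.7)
The plan is to establish the three claims in sequence. For the containment $S \cap U \subseteq \cM$, I would substitute $y = x \in S$ into the defining ravine inequality at $\bar x$: since $R(x) \in \cM$ and $x$ globally minimizes $f$, we have $f(x) \leq f(R(x))$, making the left-hand side nonpositive while $C_{\rm lb}\|x - R(x)\|^2$ is nonnegative; both must vanish, forcing $x = R(x) \in \cM$.

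For the inclusion $\mathrm{Null}(\nabla^2 f(x)) \subseteq T_\cM(x)$, fix $x \in S \cap \cM$ and any $v \in \mathrm{Null}(\nabla^2 f(x))$, and test the ravine inequality along the curve $y_t = x + tv$. Taylor expansion at $x$, using $\nabla f(x) = 0$ and $\nabla^2 f(x)v = 0$, gives $f(y_t) - f(x) = o(t^2)$, and since $f(R(y_t)) \geq f(x)$ this implies $f(y_t) - f(R(y_t)) \leq o(t^2)$. Expanding $R$ at $x \in \cM$ yields $y_t - R(y_t) = t(v - \nabla R(x)v) + O(t^2)$, so the ravine inequality reads $o(t^2) \geq C_{\rm lb} t^2 \|v - \nabla R(x)v\|^2 + O(t^3)$. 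Dividing by $t^2$ and letting $t \downarrow 0$ forces $v = \nabla R(x)v$, which lies in $T_\cM(x)$ by Proposition~\ref{prop:range}.

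For the ``in particular'' clause, the constant-rank assumption combined with the inclusion just established yields the equality $\mathrm{Null}(\nabla^2 f(x)) = T_\cM(x)$ by dimension matching (both spaces have dimension $\dim T_\cM(\bar x)$: the first by constant rank along $S$, the second by continuity of tangent spaces on $\cM$). This supplies the tangent-space condition in Definition~\ref{defn:ravine_g} at $x$. To produce a $C^p$-retraction, I would parameterize $\cM$ locally near $x$ as the graph of a $C^p$-smooth map $g\colon T_\cM(x) \to N_\cM(x)$ satisfying $g(0) = 0$ and $\nabla g(0) = 0$, and set
$$R_x(y) = x + P_{T_\cM(x)}(y-x) + g\bigl(P_{T_\cM(x)}(y-x)\bigr);$$
by construction this maps into $\cM$, fixes $\cM$ locally, and satisfies $\nabla R_x(x) = P_{T_\cM(x)}$.

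The main obstacle is verifying the quadratic growth $f(y) - f(R_x(y)) \geq C'_{\rm lb}\|y - R_x(y)\|^2$. The plan is a Taylor expansion at $R_x(y) \in \cM$: the displacement $y - R_x(y)$ lies in $N_\cM(x)$, on which $\nabla^2 f(x)$ is positive definite (using the tangent/null-space equality), so the quadratic term of the expansion produces a lower bound of the right order. The delicate piece is the first-order term $\nabla f(R_x(y))^\top(y - R_x(y))$: the gradient has size $O(\|R_x(y) - x\|^2)$ because $\nabla f$ vanishes at $x$ and the Hessian kills the dominant tangent component of $R_x(y) - x$, and an AM--GM split leaves a residual of order $\|u\|^4$ with $u = P_{T_\cM(x)}(y - x)$. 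I would absorb this residual by splitting on the size of the normal component $\|P_{N_\cM(x)}(y - x)\|$ versus $\|u\|^2$: in the regime where $y$ is not extremely close to $\cM$, the original ravine inequality at $\bar x$ already supplies a lower bound of order $\|P_{N_\cM(x)}(y - x)\|^2$ that dominates the residual, with the discrepancy $|f(R(y)) - f(R_x(y))|$ bounded using that the restriction of $f$ to $\cM$ has vanishing gradient at $x \in S$; in the complementary regime $\|y - R_x(y)\|$ is itself of order $\|u\|^2$, and the residual is automatically absorbed into the quadratic term.
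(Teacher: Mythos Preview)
Your arguments for the first two claims are correct and match the paper closely. For the inclusion $\mathrm{Null}(\nabla^2 f(x)) \subseteq T_\cM(x)$, the paper phrases the contradiction via the distance estimate $\dist(x + tv, \cM) = \Theta(t)$ when $v \notin T_\cM(x)$, combined with the chain
\[
o(t^2)=f(x+tv)-f(x)\ \ge\ f(x+tv)-f(R(x+tv))\ \ge\ \Theta(1)\cdot\dist^2(x+tv,\cM)=\Theta(t^2),
\]
whereas you expand $R$ directly at $x$ and force $v=\nabla R(x)v\in T_\cM(x)$. These are equivalent.

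For the final ``in particular'' clause, the paper is much terser than your proposal. After noting that constant rank gives $\dim\mathrm{Null}(\nabla^2 f(x))=\dim T_\cM(\bar x)=\dim T_\cM(x)$, so that the inclusion~\eqref{eqn:null_tang} becomes an equality, the paper simply stops: it treats the tangent-space identity as the only nontrivial obstruction and does not construct a new retraction at $x$ or re-derive the quadratic growth bound there. Your proposal goes further and attempts to supply a graph retraction $R_x$ and verify~\eqref{eqn:def_prop_ravine} at $x$ explicitly.

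That extra verification, however, has a real gap. Your key estimate $\|\nabla f(R_x(y))\|=O(\|R_x(y)-x\|^2)$ uses Lipschitz continuity of $\nabla^2 f$, which the hypothesis $p\ge 2$ does not guarantee. With mere continuity of the Hessian you only get $\nabla f(R_x(y))=\nabla^2 f(x)(u+g(u))+o(\|u\|)=o(\|u\|)$, where $u=P_{T_\cM(x)}(y-x)$. After the AM--GM split this leaves a residual of order $o(\|u\|^2)$ rather than $O(\|u\|^4)$, and in the regime $\|y-R_x(y)\|\ll\|u\|$ (points far along $\cM$ from $x$ but very close to $\cM$) that residual can dominate $\|y-R_x(y)\|^2$. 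Neither branch of your case split closes there: invoking the original ravine at $\bar x$ still leaves the discrepancy $|f(R(y))-f(R_x(y))|$, which is of order $o(\|u\|)\cdot\dist(y,\cM)$ and again need not be $o(\dist^2(y,\cM))$ in that regime. So as written, your growth argument works only for $p\ge 3$.
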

		\begin{proof}
			Since $\cM$ is a ravine, all $x\in S$ near $\cM$ satisfy $\Theta(1)\cdot \|x-R(x)\|^2=f^* -f(R(x)) \le 0$. Therefore, we deduce that $x$ coincides with $R(x)$ and hence lies in $\cM$, as claimed. 
			
			Next, by definition, a ravine of $f$ at a minimizer $\bar x$ satisfies the following equality: \break ${\rm Null}(\nabla^2 f(\bar x))= T_{\cM}(\bar x)$.  Suppose now, for the sake of contradiction, that there exists a point $x$ close to $\bar x$ and a unit vector $v\in {\rm Null}(\nabla^2 f(x))$ satisfying $v\notin T_{\cM}(x)$. Then the estimate $\dist(x+tv,\cM)=\Theta(1)\cdot t$ holds. Therefore, we deduce:
			\begin{align}
				o(t^2)=f(x+tv)-f(x)&\geq f(x+tv)-f(R(x+tv))\label{eqn:b1}\\
				&\geq \Theta(1)\cdot\|(x+tv)-R(x+tv)\|^2\label{eqn:b2}\\
				&=\Theta(1)\cdot \dist^2(x+tv,\cM)\label{eqn:b3}\\
				&=\Theta(1)\cdot t^2,
			\end{align}
			where \eqref{eqn:b1} follows from the assumption $\nabla^2 f(x)v=0$, the inequality \eqref{eqn:b2} uses the defining property of \eqref{eqn:def_prop_ravine}, and \eqref{eqn:b3} follows from Proposition~\ref{prop_dist_comp}. Dividing through by $t^2$ and letting $t$ tend to zero yields a contradiction. Thus, the claimed inclusion~\eqref{eqn:null_tang} holds. Finally, suppose that the Hessian $\nabla^2 f$ has a constant rank on $S$ near $\bar x$. Then equalities $\rank \nabla f^2(x)=\rank \nabla f^2(\bar x)=\dim T_{\cM}(\bar x)$ hold for all $x\in S$ near $\bar x$. Consequently, the inclusion \eqref{eqn:null_tang} holds as an equality for all  $x\in S$ near $\bar x$.
		\end{proof}
		
		We end the section by recording an intriguing property that further highlights the key role that the ravine plays on the local behavior of the function: the lower growth of $f$ along the ravine $\cM$ implies the same lower growth for $f$ outside of the ravine. Throughout the rest of the paper, we denote the minimum value of $f$ by $f^*$. 
		
		\begin{lemma}[Extending growth]
			Suppose that a function $f\colon\E\to\R$ admits a $C^2$ ravine at a minimizer $\bar x$. Let $S$ be the set of minimizers of $f$ and suppose that there exist constants 
			$\delta,\dlb > 0$ and $p>2$ such that the estimate 
			\begin{equation}\label{eqn:growth_on_man}
				f(y)  - f^*\ge \dlb\cdot\dist^p(y,S) \qquad \textrm{holds for all }y\in B_{\delta}(\bar x)\cap \cM.
			\end{equation}
			Then there exist constants $\delta',\dlb' > 0$ satisfying
			\begin{align*}
				f(x)  - f^*\ge \dlb'\cdot\dist^p(x,S) \qquad \textrm{holds for all }x\in B_{\delta'}(\bar x).
			\end{align*}	
		\end{lemma}
		\begin{proof}
			For any $x$ sufficiently close to $\bar x$ we compute:
			\begin{align*}
				\dist^p(x,S)&\leq 2^{p-1}\cdot \|x-R(x)\|^p+2^{p-1}\cdot \dist^p(R(x),S)\\
				&\leq o(1) (f(x)-f(R(x))) + \tfrac{2^{p-1}}{\dlb}\cdot (f(R(x))  - f^*)\\
				&\le  \tfrac{2^{p-1}}{\dlb}\cdot (f(x)  - f^* ),
			\end{align*}
			where the first inequality follows from the triangle inequality and the estimate $\frac{1}{2^{p-1}}(a+b)^p\leq a^p+b^p$, and the second inequality follows from the assumption \eqref{eqn:growth_on_man} and the defining property of the ravine \eqref{eqn:def_prop_ravine}.
		\end{proof}

		\section{Ravines: analytic properties}\label{sec: ravine}
		In this section, we derive several useful properties of ravines, with a particular view of how ravines influence algorithmic behavior.

		\subsection{Gradient control in tangent and normal directions.}
		Recall that the key property of ravine \eqref{eqn:def_prop_ravine} implies an upper bound on the Function gap $|f(x)-f(R(x))|= O(\dist^2(x,\cM))$. In this section, we discuss the consequences of this condition on first-order behavior, namely on the gradients $\nabla f$ and $\nabla(f\circ R)$. Estimating the deviations between these two gradients in tangent and normal directions will allow us to connect gradient-based methods for $f$ to gradient-based methods for the function $f\circ P_\cM$. In later sections, this connection will play a central role in designing and analyzing algorithms.

		We begin with the following lemma by ``formally differentiating'' \eqref{eqn:def_prop_ravine} and deducing an analogous bound on the gradients, $\|\nabla f(x)-\nabla (f\circ R)(x))\|= O(\dist(x,\cM))$.
		
		\begin{lemma}[Gradient control]\label{lem:grad_are_close}
			Let $\cM$ be a $C^2$-ravine for a function $f$ at $\bar x$. Then the estimate holds:
			$$\|\nabla f(x)-\nabla (f\circ R)(x)\|=O(\dist(x,\cM))\qquad \textrm{ as }x\textrm{ tends to }\bar x.$$  
		\end{lemma}
		\begin{proof}
			Proposition~\ref{prop_dist_comp} implies that on any sufficiently small neighborhood of $\bar x$ the estimate
			{\color{blue} 	\begin{equation}\label{eqn:sandwich}
					\|x-R(x)\|^2\leq c\cdot\dist^2(x,\cM),
				\end{equation}
				holds for some constants $c>0$.}  Define the function 
			{\color{blue} $$g(x):=f(x)-f(R(x)).$$ 
				The definition of the ravine along with \eqref{eqn:sandwich} imply the estimate
				$$0\leq g(x)\leq C \|x-R(x)\|^2\leq c\cdot C \cdot \dist^2(x,\cM),$$
				holds for some constant $C > 0$ and all $x$ near $\bar x$.} Since $\nabla g$ is $L$-Lipschitz continuous around $\bar x$ for some constant $L$, the standard descent lemma implies
			$$0\leq g\left(x-\tfrac{1}{L}\nabla g(x)\right)\leq g(x)-\tfrac{1}{2L}\|\nabla g(x)\|^2,$$
			for all $x$ near $\bar x$. Thus we conclude $\|\nabla g(x)\|^2=O(g(x))$. Taking into account that $g(x)=O(\dist^2(x,\cM))$, we deduce $\|\nabla g(x)\|=O(\dist(x,\cM))$. {\color{blue} This completes the proof since $\nabla g(x) = \nabla f(x)-\nabla (f\circ R)(x)$.}
		\end{proof}

		\subsection{Orthogonal decomposition of the function}
		In this section, we will pass from the retraction $R(\cdot)$ to the true projection $P_{\cM}(\bar x)$. The reason is simply that the analysis of algorithms typically relies on distances and projections, with retractions serving only as approximations of the true projection. To this end, given a $C^2$-ravine $\cM$ for a function $f$ at a point $\bar x$, we decompose $f$ into  {\em tangent} and {\em normal parts}:
		$$f(x)=f_{N}(x)+f_{T}(x),$$
		where we define $f_N(x):=f(x)-f(P_{\cM}(x))$ and $f_T(x):=f(P_{\cM}(x))$. The following theorem is the main result of the section. In short, it shows that the normal part $f_{N}(x)$ satisfies several desirable regularity conditions relative to the manifold $\cM$, such as quadratic growth and gradient aiming towards $\cM$, but only up to an error term that depends on the gradient of the tangent part $\|\nabla f_T\|$. The precise form of the error terms will be critically important in the sequel.
		
		\begin{theorem}[Key properties of the normal part]\label{thm:key_properties}
			Let $\cM$ be a $C^2$-ravine of a function $f$ at a minimizer $\bar x$ and suppose that $P_{\cM}$ is $C^2$-smooth near $\bar x$. Then, the following properties hold for all $x$ near $\bar x$.
			\begin{enumerate}
				\item {\bf (Projected gradient)}\label{it:strong_a_in_thm} $\qquad\|P_{T_{\cM}(P_{\cM}(x))} \nabla f_{N}(x)\| = o(1)\cdot \dist(x,\cM).$
				\item {\bf (Aiming)}\label{it:aiming} There exists a constant $C_{{\rm a}}>0$ such that the estimates hold:
				\begin{align}
					\langle \nabla f(x),x-R(x)\rangle&\geq C_{\rm a}\cdot \|x-R(x)\|^2,\label{eqn:aimin}\\
					\langle \nabla f_N(x),x-P_{\cM}(x)\rangle&\geq C_{\rm a}\cdot\dist^2(x,\cM)+o(1)\cdot\|\nabla f_T(x)\|\cdot \dist(x,\cM).\label{eqn:aim_main}
				\end{align}
				\item {\bf (Growth)}\label{it:grad_val_bound} 
				\begin{align}
					\|\nabla f_N(x)\|&= O(\dist(x,\cM))+o(1)\cdot\|\nabla f_T(x)\|,\label{eqn:grad_bound_norm}\\
					|f_N(x)|&= O(\dist^2(x,\cM))+o(1)\cdot\|\nabla f_T(x)\|\cdot \dist(x,\cM).\label{eqn:val_bound}
				\end{align}
			\end{enumerate}
		\end{theorem}

		A few comments are in order. The projected gradient expression shows that the gradient of $\nabla f_{N}(x)$ is small in tangent directions $P_{T_{\cM}(P_{\cM}(x))}$. 
		The companion growth bound \eqref{eqn:grad_bound_norm} shows that the gradient $\nabla f_{N}(x)$ in the normal directions $P_{N_{\cM}(P_{\cM}(x))}$ is tightly controlled by $\dist(x,\cM)$ and the gradient of the tangent part $\|\nabla f_T(x)\|$. The aiming condition \eqref{eqn:aim_main} shows that the negative gradient $-\nabla f_{N}(x)$ points towards the projected point $P_{\cM}(x)$ up to an error controlled by the distance $\dist(x,\cM)$ and the gradient of the tangent part $\|\nabla f_{T}(x)\|$.

		\begin{proof}We now establish the claimed properties in  Theorem~\ref{thm:key_properties} in order of appearence. 
			
			\paragraph{\normalfont{\em Item \eqref{it:strong_a_in_thm} (Projected Gradient):}}
			For the ease of notation, we set $y:=P_\cM(x)$  and $\Delta:=x - y$. 
			{\color{blue} We first note that 
				\begin{align}
					\|P_{T_{\cM}(y)} \nabla f_{N}(x)\| &=  \norm{P_{T_\cM(y)}(\nabla f(x) - \nabla P_\cM(x)^\top \nabla f(y))} \notag\\
					&\le   \norm{P_{T_\cM(y)}(\nabla f(x) -   \nabla P_\cM(y)^\top \nabla f(y))} +   \norm{P_{T_\cM(y)}(\nabla P_\cM(x) - \nabla P_\cM(y))^\top \nabla f(y))}\notag\\
					&= \norm{P_{T_\cM(y)}(\nabla f(x) -  \nabla f(y))} + o(1) \cdot \| \Delta\| \label{eq:stronga1_eq1}\\
					&= \norm{P_{T_\cM(y)}\int_{0}^{1} (\nabla^2 f(y+ t\Delta)) \Delta ~dt} + o(1)\cdot\|\Delta\|\notag\\
					&\leq \left\|P_{T_\cM(y)}\int_{0}^{1} \nabla^2 f(y+ t\Delta) ~dt\right\|_{\rm op}\cdot\|\Delta\| + o(1)\cdot\|\Delta\|, \label{eq:stronga1}
				\end{align}
				where the equality~\eqref{eq:stronga1_eq1} follows from the local Lipschitz continuity of $\nabla P_\cM$ and $\|\nabla f(y)\| = o(1)$.
			}
			Next, using the triangle inequality, we deduce
			\begin{align}
				\left\|P_{T_\cM(y)}\int_{0}^{1} \nabla^2 f(y+ t\Delta) ~dt\right\|_{\rm op}&\leq\underbrace{\left\|P_{T_\cM(\bar x)}\int_{0}^{1} \nabla^2 f(y+ t\Delta)~ dt\right\|_{\rm op}}_{{=o(1)}}\notag\\
				&\quad+\underbrace{\| P_{T_{\cM}(y)}-P_{T_\cM(\bar x)}\|_{\rm op}}_{=o(1)}\cdot \underbrace{\left\|\int_{0}^{1} (\nabla^2 f(y+ t\Delta))~ dt\right\|_{\rm op}}_{=O(1)},\label{eqn:esta2}
			\end{align}
			where the under-brace expressions, in order of appearance, follow from the fact that the range of $\nabla^2 f(\bar x)$ is orthogonal to $T_{\cM}(\bar x)$, Lipschitz continuity of $\nabla P_{\cM}(\cdot)$, and boundedness of the Hessian $\nabla^2 f(x)$ near $\bar x$. Combining \eqref{eq:stronga1} and \eqref{eqn:esta2} completes the proof.

			\paragraph{\normalfont{\em Item \eqref{it:aiming} (Aiming):}}
			Since $\bar x$ is a minimizer of $f$, for every $c>0$ there exists a convex neighborhood $U$ of $\bar x$ such that the estimate $\lambda_{\min}(\nabla^2 f(x))\geq -c$ holds for every $x\in U$. Consequently, for all  $x$ sufficiently close to $\bar x$, the estimate holds:
			$$f(R(x))\geq f(x)+\langle \nabla f(x),R(x)-x\rangle-c \|x-R(x)\|^2.$$
			Rearranging and using the defining property of the ravine \eqref{eqn:def_prop_ravine}, we therefore conclude
			\begin{align*}
				\langle \nabla f(x),x-R(x)\rangle&\geq f(x)-f(R(x))-c \|x-R(x)\|^2\\
				&\geq (C_{\rm lb}-c)\|x-R(x)\|^2.
			\end{align*}
			Since $c>0$ is arbitrary, the estimate \eqref{eqn:aimin} follows. 
			
			Next, fix a point $x$ near $\bar x$ and set $y:=P_{\cM}(x)$. Then we compute
			\begin{align}
				\langle \nabla f_N(x), x-P_{\cM}(x)\rangle&=\langle \nabla f(x)-\nabla P_{\cM}(x)^\top \nabla f(y), x-y\rangle\notag\\
				&=\langle \nabla f(x),x-y\rangle-\langle   \nabla f(y), {\nabla P_{\cM}(x) (x-y)}\rangle,\label{pass_tproj}
			\end{align}
			Using  \eqref{eqn:proj_null_needed}, we deduce that the last term in \eqref{pass_tproj} is zero. Therefore, we conclude
			\begin{align*}
				\langle \nabla f_N(x), x-P_{\cM}(x)\rangle&=\langle \nabla f(x),x-y\rangle\\
				&=\langle \nabla f(x),x-R(x)\rangle+\langle \nabla f(x),R(x)-P_{\cM}(x)\rangle\\
				&= \Theta(1)\cdot \underbrace{\|x-R(x)\|^2}_{=\theta(1)\cdot\dist^2(x,\cM))}+\langle \nabla f(x),R(x)-P_{\cM}(x)\rangle,
			\end{align*}
			where the last equality follows from \eqref{eqn:aimin} and the expression in the under-brace follows from Proposition~\ref{prop_dist_comp}.
			Next, we rewrite the remainder term:
			$$\langle \nabla f(x),R(x)-P_{\cM}(x)\rangle=\underbrace{\langle \nabla (f\circ P_{\cM})(x),R(x)-P_{\cM}(x)  \rangle}_{=o(1)\cdot\|\nabla (f\circ P_{\cM})(x)\|\cdot \dist(x,\cM)}+\langle \nabla f(x)-\nabla (f\circ P_{\cM})(x),R(x)-P_{\cM}(x)\rangle,$$
			where the expression in the under-brace follows from Proposition~\ref{prop_dist_comp}.
			Using Proposition~\ref{prop:tangent_close}, we may write $\frac{R(x)-P_{\cM}(x)}{\|R(x)-P_{\cM}(x)\|}=\omega+O(\|R(x)-P_{\cM}(x)\|)$ for some unit tangent vector $\omega\in T_{\cM}(P_{\cM}(x))$. Consequently, using \eqref{it:strong_a_in_thm} we deduce the estimate
			$$\langle \nabla f(x)-\nabla (f\circ P_{\cM})(x),R(x)-P_{\cM}(x)\rangle\leq o(1)\cdot\dist^2(x,\cM),$$
			thereby completing the proof.

			\paragraph{\normalfont{\em Item \eqref{it:grad_val_bound}  (Growth):}}
			We first prove \eqref{eqn:grad_bound_norm}. To simplify notation, set $y=R(x)$. The triangle inequality and Lemma~\ref{lem:grad_are_close} then directly imply
			\begin{align}
				\|\nabla f_N(x)\| &\leq \|\nabla f_N(x)-(\nabla f(x)-\nabla (f\circ R)(x))\|+\|\nabla f(x)-\nabla (f\circ R)(x)\|\notag \\
				&\leq \|\nabla( f\circ P_{\cM})(x)-\nabla(f\circ R)(x)\|+O(\dist(x,\cM))\notag\\
				&= \|\nabla f_T(x)-\nabla R(x)^\top\nabla f(y)\|+O(\dist(x,\cM)).\label{eqn:grad_bound1}
			\end{align}
			Next,  we compute
			\begin{align}
				\nabla R(x)^\top\nabla f(y)&=\nabla R(x)^\top (P_{N_{\cM}(y)}\nabla f(y)+P_{T_{\cM}(y)}\nabla f(y))\notag\\
				&\leq \underbrace{\nabla R(x)^\top P_{N_{\cM}(y)}\nabla f(y)}_{=0}+\nabla R(x)^\top P_{T_{\cM}(y)}\nabla f(y),\label{eqn:grad_bound2}
			\end{align}
			where the underbrace follows from the inclusion ${\rm Range}(\nabla R(x))\subseteq T_{\cM}(y)$. 
			Lipschitz continuity of $\nabla f$ and $\nabla P_{\cM}$ yield the estimate for the remainder: 
			\begin{align}
				\nabla R(x)^\top P_{T_{\cM}(y)}\nabla f(y)&=\nabla R(x)^\top P_{T_{\cM}(y)}\nabla f(P_{\cM}(x))+O(\|y-P_{\cM}(x)\|)\notag\\
				&=\nabla R(x)^\top \nabla P_{\cM}(x)^\top\nabla f(P_{\cM}(x))+O(\dist(x,\cM))\notag\\
				&=\nabla R(x)^\top \nabla f_T(x)+O(\dist(x,\cM))\notag\\
				&= P_{T_\cM(P_{\cM}(x))} \nabla f_T(x)+O(\dist(x,\cM))\notag\\
				&\quad+ O(\underbrace{\|\nabla R(x)-P_{T_\cM(P_{\cM}(x))}\|_{\rm op}}_{=o(1)}\cdot \|\nabla f_T(x)\|).\label{eqn:grad_bound3}
			\end{align}
			{\color{blue} Finally we compute
				\begin{align}
					\|P_{T_\cM(P_{\cM}(x))} \nabla f_T(x)-\nabla f_T(x)\|&= \|P_{N_\cM(P_{\cM}(x))} \nabla P_\cM(x)^\top \nabla f(y)\| \notag \\
					&= \|P_{N_\cM(P_{\cM}(x))} \nabla P_\cM(x)^\top P_{T_\cM(P_{\cM}(x))}\nabla f(P_{\cM}(x))\|\notag \\
					&= O(\dist(x,\cM)) \|P_{T_\cM(P_{\cM}(x))}\nabla f(P_{\cM}(x))\| \notag\\
					&= o(1) \|P_{T_\cM(P_{\cM}(x))}\nabla f(P_{\cM}(x))\| \notag \\
					&= o(1) \|\nabla f_T(P_{\cM}(x))\| \notag \\
					&=o(1)\cdot \|\nabla f_T(x)\| + O(\dist(x,\cM)),\label{eqn:grad_bound4}
				\end{align}
				where the first equality follows from the definition of $f_T$, the second equality follows from Proposition~1, the third equality follows from Lipschitz continuity of $\nabla P_\cM$ and $\nabla P_\cM(P_\cM(x)) = P_{T_\cM(P_{\cM}(x))}$, the fourth equality follows from the fact that $\dist(x,\cM)$ tends to zero as $x$ tends to $\bar x$, the fifth equality follows from the definition of $f_T$, and the final equality follows from the Lipschitz continuity of $\nabla f_T$.
			}
			Combining \eqref{eqn:grad_bound1}, \eqref{eqn:grad_bound2}, \eqref{eqn:grad_bound3}, and \eqref{eqn:grad_bound4} completes the proof of \eqref{eqn:grad_bound_norm}.

			Next, we establish \eqref{eqn:val_bound}. To this end, the definition of the ravine along with Proposition~\ref{prop_dist_comp} implies $|f(x)-f(R(x))|=O(\dist^2(x,\cM))$, and therefore we have
			\begin{align}
				|f_N(x)|&\le |f(R(x))-f(P_{\cM}(x))|+ |f(x)-f(R(x))| \notag\\
				&=|f(R(x))-f(P_{\cM}(x))| +O(\dist^2(x,\cM)).\label{eqn:growth_norm1}
			\end{align}
			Continuing, the smoothness of $f$ implies 
			\begin{equation}\label{eqn:growth_norm2}
				f(R(x))-f(P_{\cM}(x))=\langle \nabla f(P_{\cM}(x)), R(x)-P_{\cM}(x)\rangle+O(\|R(x)-P_{\cM}(x)\|^2).
			\end{equation}
			Using Proposition~\ref{prop:tangent_close},  we may write 
			$\frac{R(x)-P_{\cM}(x)}{\|R(x)-P_{\cM}(x)\|}=\omega+O(\|R(x)-P_{\cM}(x)\|)$ for some unit tangent vector $\omega\in T_{\cM}(P_{\cM}(x))$. Therefore, we compute
			\begin{align}
				\langle \nabla f(P_{\cM}(x)), R(x)-P_{\cM}(x)\rangle&=\langle \nabla f(P_{\cM}(x)),\omega\rangle\cdot \|R(x)-P_{\cM}(x)\|+o(1)\cdot \|R(x)-P_{\cM}(x)\|^2\notag\\
				&=\langle \nabla P_{\cM}(P_{\cM}(x))\nabla f(P_{\cM}(x)),\omega\rangle\cdot \|R(x)-P_{\cM}(x)\|\notag\\
				&~ +o(1)\cdot \|R(x)-P_{\cM}(x)\|^2\notag\\
				&=\langle \nabla f_T(x),\omega\rangle\cdot \|R(x)-P_{\cM}(x)\|\notag\\
				&~ +o(1)\cdot\dist(x,\cM)\cdot \|R(x)-P_{\cM}(x)\|+o(1)\cdot\|R(x)-P_{\cM}(x)\|^2\notag\\
				&=o(1)\cdot \|\nabla f_T(x)\|\cdot \dist(x,\cM)+o(1)\cdot\dist^2(x,\cM),\label{eqn:growth_norm3}
			\end{align}
			where the third equality follows Lipschitz continuity of $\nabla P_{\cM}$ and the fourth equality follows from Proposition~\ref{prop_dist_comp}.
			Combining \eqref{eqn:growth_norm1}, \eqref{eqn:growth_norm2}, and \eqref{eqn:growth_norm3} completes the proof of \eqref{eqn:val_bound}. 	
		\end{proof}

		The previous theorem shows that the normal function $f_N$ behaves very well relative to $\cM$ up to an error that is controlled by the gradient of the tangent part $\|\nabla f_T\|$. This regularity will allow us to argue that the constant stepsize gradient method will rapidly approach the ravine $\cM$ up to the point where the tangent part 
		$f_T$ dominates the normal part $f_N$. At this point, we will argue that taking a long (Polyak) gradient step will move the iterate significantly closer to the set of solutions. In order to facilitate this argument, we need to place some assumptions on $f_T$, which we now record. Most importantly, we will require $f_T$ to have constant order growth on $\cM$ away from the solution set. We record the relevant assumptions next.

		\begin{assumption}[Tangent part]\label{assum:tangent}
			Let $\cM$ be a $C^2$-ravine of a function $f$ at a minimizer $\bar x$, and let $S$ be the set of minimizers of $f$. Suppose that there exist constants $p>1$ and $\dlb, D_\ub>0$ such that the following properties hold for any $y\in \cM$ sufficiently close to $\bar x$:
			\begin{enumerate}
				\item {\bf (Growth condition)}\label{item: localgrowthconditionravine} 		\qquad	$
				\dlb\cdot \dist^p(y, S) \le f_T(y) - f^* \le D_\ub\cdot\dist^p(y, S). 
				$		\item {\bf(Aiming towards solution)}\label{item: localaimingtowardsolutionravine} 
				The estimate 
				$$
				f_T(y) - f^* \le \dotp{\nabla f_T(y), y - \bar y} + o(1)\cdot \dist^p(y, S),
				$$
				holds for any $\bar y\in P_{S}(y)$.
				
				\item {\bf(Size of gradients)}\label{item: localsizeofgdravine} 
				$
				\qquad		\|\nabla f_T(y)\| \le O(\dist^{p-1}(y, S)).
				$
			\end{enumerate}
		\end{assumption}

		At first sight, it may appear difficult to verify Assumption~\ref{assum:tangent}. On the contrary, the following theorem provides a convenient shortcut: if the function $f$ and the ravine $\cM$ are $C^{p+1}$-smooth, then the constant-order growth condition  \ref{item: localgrowthconditionravine}  automatically implies the other two regularity conditions \ref{item: localaimingtowardsolutionravine}  and \ref{item: localsizeofgdravine}. The following Lemma will be useful in proving the theorem.

		\begin{lemma}\label{lem:power_growth}
			Let $g$ be a $C^{p}$-smooth function {\color{blue} $(p\ge 2)$}. Let $S$ denote a closed subset of the minimizers of $g$. Fix $\bar x \in S$ and suppose that we have
			$$
			g(x) - \min g = O(\dist^p(x, S))
			$$
			for all $x$ near $\bar x$.  Then, for all $x$ near $\bar x$ 
			\begin{enumerate}
				\item {\bf(Aiming towards solution)}\label{item: localaimingtowardsolutionravine_sufficient} 
				The estimate 
				$$
				g(x) - \min g = \frac{1}{p}\dotp{\nabla g(x), x- {\color{blue}y} } +  o(1) \cdot \dist^{p}(x, S),
				$$
				{\color{blue}as $x$ tends to $\bar x$ and $y \in P_{S}(x)$.}
				
				\item {\bf(Size of gradients)}\label{item: localsizeofgdravine_sufficient} 
				$
				\qquad		\|\nabla g(x)\| = O(\dist(x, S)^{p-1}).
				$
			\end{enumerate}
		\end{lemma}
		\begin{proof}
			{\color{blue}
				We claim that $D^q g(y) = 0$ for all $y \in S$ near $\bar x$. To show this, first note that for any $y \in \mathbf{E}$, 
				we may form the Taylor expansion up to order $p-1$ as follows:	\begin{equation}\label{eqn:we_need_later}
					0 \leq g(x) - g(y)=\sum_{q=1}^{p-1}\frac{D^q g(y)[(x-y)^{\otimes q}] }{q!}+O(\|x-y\|^{p}).
				\end{equation}
				Now by assumption, there exists $C > 0$ such that for all $x\in \mathbf{E}$ and $y \in S$  near $\bar x$, we have 
				\begin{align}\label{eq:g_growth_rev}
					g(x) - \min g \leq C\dist^p(x,S) \leq C\|x - y\|^p
				\end{align} 
				From this we will deduce that $D^q g(y) [z^{\otimes q}]=0$ for all $q\in \{1,\ldots, p-1\}$ any unit vector $z$. 
				Indeed, define the curve $x(t) := y + tz$. 
				Then divide~\eqref{eqn:we_need_later} through by $\|x(t)-y\|^q$ with $q\in \{1,\ldots, p-1\}$ in increasing order and let $t$ tend to zero.
				From the bound~\eqref{eq:g_growth_rev}, we deduce $D^q g(y) [z^{\otimes q}]=0$. As shown in~\cite[Equation (1.2)]{nesterov2021implementable} and \cite[Theorem 1]{Banach1938}, this further implies that $D^q g(y) = 0$ for all $y \in S$ near $\bar x$.
			} 
			
			We will use this observation throughout the rest of the proof {\color{blue} for $y \in P_S(x)$ with $x$ near $\bar x$. This is justified since $y \rightarrow \bar x$ as $x \rightarrow \bar x$.}
			We  now establish the gradient size condition~\ref{item: localsizeofgdravine_sufficient} . To this end, for any vector $w$, we Taylor expand the gradient:
			{\color{blue}\begin{equation}\label{eqn:allign_taylor}
					\begin{aligned}
						\langle\nabla g(x),w\rangle&=\sum_{q=2}^p \frac{D^{q}g(\bar x)[(x-y)^{\otimes q-1}, w] }{(q-1)!}+ o(1)\dist(x, S)^{p-1}\cdot\|w\|\\
						&= \frac{D^{p}g(y)[(x-y)^{\otimes p-1}, w] }{(p-1)!}+ o(1)\dist(x,S)^{p-1}\cdot\|w\|\\
						&=O(\dist(x,S)^{p-1})\cdot \|w\|.
					\end{aligned}
				\end{equation}
				We thus deduce $\|\nabla g(x)\|=O(\dist(x,S)^{p-1})$.}
			
			Next, we argue the aiming condition \ref{item: localaimingtowardsolutionravine_sufficient}. Plugging in {\color{blue}$w=x-y$} in \eqref{eqn:allign_taylor}, and comparing the right side to a Taylor expansion of $g$ yields the equality
			{\color{blue}\begin{align*}
					\langle\nabla g(x),x-y\rangle&= \frac{D^{p}g(\bar x)[(x-y)^{\otimes p}] }{(p-1)!} +  o(1)\dist(x, S)^p\\
					&=p\cdot (g(x) -g(y))+  o(1)\dist(x,S)^{p}.
			\end{align*}}
			Rearranging gives 
			$$g(x) -g(y)=\frac{1}{p}\langle\nabla g(x),x-y\rangle+ o(1)\cdot\dist(x, S)^{p},$$
			as desired.

		\end{proof}
		
		\begin{theorem}[Growth and high-order smoothness]\label{thm:simplified_assumpt}
			Let $\cM$ be a $C^{p+1}$-ravine of a function $f$ at a minimizer $\bar x$  {\color{blue} $(p\ge 2)$}. Let $S$ be the set of minimizers of $f$. 
			Suppose that there exist constants $\dlb, D_\ub>0$ satisfying the growth condition 
			\begin{equation}\label{eqn:basic_growth_cor}
				\dlb\cdot \dist^p(y, S) \le f_T(y) - f^* \le D_\ub\cdot\dist^p(y, S),
			\end{equation}
			for all $y\in \cM$ near $\bar x$. Then Assumption~\ref{assum:tangent} holds.
		\end{theorem}
		\begin{proof}
			Note that since $S \subseteq \cM$ we have 
			$$
			\dist(P_{\cM}(x), S) \leq \dist(x,S) + \|x - P_{\cM}(x)\| \leq 2\dist(x, S). 
			$$
			Consequently, the function $f_T = f\circ P_\cM$ satisfies
			$$
			f_T(x) -  f^\ast = O(\dist^p(P_\cM(x), S)) = O(\dist^p(x, S)).
			$$
			Thus, by Lemma~\ref{lem:power_growth}, for all $x$ near $\bar x$ and $\hat x \in P_S(x)$,  we have 
			$$
			\|\nabla f_T(x)\| = O(\dist^{p-1}(x, S)) \qquad \text{ and } \qquad f_T(x)  - f^\ast \leq \frac{1}{p}\dotp{\nabla f_T(x), x - \hat x} + {\color{blue}o(1)}\cdot \dist^{p}(x, S).
			$$
			Finally, since $f_T(y) - f^\ast = \Theta(1)\cdot \dist^{p}(y, S)$ for all $y \in \cM$ near $\bar x$, we have $\dotp{\nabla f_T(y), y - \hat y} \geq 0$ for all $\hat y \in P_S(y)$. Consequently, 
			$$
			f_T(y)  - f^\ast \leq \dotp{\nabla f_T(y), y - \hat y} +  o(1)\cdot \dist^{p}(x, S),
			$$
			for all $y \in \cM$ near $\bar x$ and $\hat y \in P_S(y)$.
		\end{proof}

		In the special (and important!) case of $p=4$, meaning fourth-order growth, Assumption \ref{assum:tangent} can be simplified even further. The following proposition shows that if $\nabla^2 f$ has constant rank on the solution set and merely the left inequality holds in \eqref{eqn:basic_growth_cor}, then Assumption \ref{assum:tangent} holds automatically.

		\begin{proposition}[Ravine under fourth order growth]\label{prop:rav_four}
			Let $\cM$ be a $C^{5}$-ravine of a function $f$ at a  minimizer $\bar x$ and let $S$ be the set of minimizers of $f$. Suppose that $\nabla^2 f$ has constant rank on $S$ near $\bar x$ and that there exists a constant $\dlb >0$ satisfying the lower growth condition 
			\begin{equation}\label{eqn:basic_growth_cor_lower}
				\dlb\cdot \dist^4(y, S) \le f_T(y) - f^*,
			\end{equation}
			for all $y\in \cM$ near $\bar x$. Then Assumption~\ref{assum:tangent} holds.
		\end{proposition}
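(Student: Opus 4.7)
The plan is to apply Theorem~\ref{thm:simplified_assumpt} with $p=4$. The growth hypothesis already supplies the lower bound in~\eqref{eqn:basic_growth_cor_lower}, so the heart of the matter is to prove the matching upper bound $f_T(y)-f^*=O(\dist^4(y,S))$ for $y\in\cM$ near $\bar x$. By Proposition~\ref{prop: uniform_ravine} together with the constant-rank hypothesis, every $\hat y\in S$ near $\bar x$ lies in $\cM$ and satisfies $T_{\cM}(\hat y)={\rm Null}(\nabla^2 f(\hat y))$.

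Fix $y\in \cM$ close to $\bar x$ and $\hat y\in P_S(y)$, and set $h=y-\hat y$. Orthogonally decompose $h=u+v$ with $u\in T_{\cM}(\hat y)$ and $v\in N_{\cM}(\hat y)$; since both $y$ and $\hat y$ lie in $\cM$, Proposition~\ref{prop:tangent_close} gives $\|v\|=O(\|h\|^2)$. Expanding $f$ around $\hat y$ to fourth order and using $\nabla f(\hat y)=0$ together with $u\in {\rm Null}(\nabla^2 f(\hat y))$, the quadratic term reduces to $\tfrac12\nabla^2 f(\hat y)[v,v]=O(\|h\|^4)$, the quartic term is automatically $O(\|h\|^4)$, and the remainder is $O(\|h\|^5)$. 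The only nontrivial piece is the cubic term $\nabla^3 f(\hat y)[h,h,h]$: expanding trilinearly, every summand containing at least one factor of $v$ is $O(\|u\|^2\|v\|+\|u\|\|v\|^2+\|v\|^3)=O(\|h\|^4)$, so the desired upper bound reduces to showing that $\nabla^3 f(\hat y)[u,u,u]=0$ for every $u\in T_{\cM}(\hat y)$.

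This cubic vanishing is the technical heart of the proof. Given such a $u$, choose a smooth curve $\gamma\colon(-\varepsilon,\varepsilon)\to\cM$ with $\gamma(0)=\hat y$ and $\gamma'(0)=u$. Writing $\gamma(t)-\hat y=tu+O(t^2)$ and Taylor expanding $f$ at $\hat y$, the vanishing of $\nabla f(\hat y)$ together with the vanishing of $\nabla^2 f(\hat y)$ on $T_{\cM}(\hat y)$ yields $f(\gamma(t))-f(\hat y)=\tfrac{t^3}{6}\nabla^3 f(\hat y)[u,u,u]+O(t^4)$. Since $\hat y\in S$ is a global minimizer, the left-hand side is nonnegative for all sufficiently small $t$ of either sign, which forces the cubic coefficient to be zero. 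Assembling the four pieces gives $f_T(y)-f^*=f(y)-f(\hat y)=O(\|h\|^4)=O(\dist^4(y,S))$; combined with~\eqref{eqn:basic_growth_cor_lower}, the two-sided fourth-order growth of Theorem~\ref{thm:simplified_assumpt} holds, and that theorem (applicable because $\cM$ is a $C^5$-ravine) delivers Assumption~\ref{assum:tangent}. The main obstacle is precisely the cubic step: the Morse lemma is unavailable because $\cM$ is permitted to differ from the Morse ravine, and it is the curve-on-$\cM$ argument that allows us to exploit the minimality of $\hat y$ in order to kill the offending $t^3$ term.
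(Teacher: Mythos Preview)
Your argument is correct and reaches the same conclusion as the paper, but the execution differs. The paper works with the composite function $g := f\circ P_{\cM}-f^*$: since $\nabla P_{\cM}(\bar y)$ maps into $T_{\cM}(\bar y)={\rm Null}(\nabla^2 f(\bar y))$ (by Proposition~\ref{prop: uniform_ravine} and the constant-rank hypothesis), one gets $\nabla^2 g(\bar y)=0$ outright, so the Taylor expansion of $g$ around any $\bar y\in S$ begins at the cubic term, which is then killed by the same odd-symmetry/minimality argument you use. Your route stays with $f$ itself and compensates for its nonzero Hessian via the tangent--normal split $h=u+v$, controlling the surviving quadratic and mixed-cubic pieces through $\|v\|=O(\|h\|^2)$. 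Both proofs hinge on the identical cubic-vanishing observation; the paper's is slightly cleaner in that no geometric decomposition of $h$ is needed, while yours is more direct in that it avoids differentiating $P_{\cM}$. Incidentally, your curve $\gamma\subset\cM$ can be replaced by the straight line $t\mapsto\hat y+tu$: since $u\in{\rm Null}(\nabla^2 f(\hat y))$, the quadratic term along this line already vanishes, and the same sign argument forces $\nabla^3 f(\hat y)[u,u,u]=0$.
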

		\begin{proof}
			In light of Theorem~\ref{thm:simplified_assumpt}, it suffices to establish the bound $f_T(y) - f^*=O(\dist^4(y, S))$ for all $y\in \cM$ near $\bar x$. 
			To that end define $g = f\circ P_{\cM} - f^\ast$.
			Fix a point $\bar y\in S$ near $\bar x$. Observe the equalities $g(\overline y) = 0$, 
			$\nabla g(\bar y)=0$,
			and 
			$$\nabla^2g(\bar y)=P_{\cT_\cM(\bar y)}\nabla P_\cM(\bar y)^{\top}\nabla^2 f(\bar y)  \nabla P_\cM(\bar y) P_{\cT_\cM(\bar y)}=0,$$
			where we used the equality  ${\rm Null}(\nabla^2 f(\bar y))=T_{\cM}(\bar y)$ from Proposition~\ref{prop: uniform_ravine} and the inclusion ${\rm Range}(\nabla P(\bar y))\subset T_{\cM}(\bar y)$. 
			{\color{blue} We claim that $D^{3}g(\bar y) = 0$ as well.}
			Indeed, the Taylor expansion of $g$ around $\bar y$ takes the form
			$$g(x) = D^{3}g(\bar y)[(x-\bar y)^{\otimes 3}]+O(\|x-\bar y\|^4),$$
			for all $x$ near $\bar y$.
			Note the equality $D^{3}g(\bar y)[(-u)^{\otimes 3}]=-D^{3}g(\bar y)[u^{\otimes 3}]$ for all $u$. Therefore, taking into account that $\bar y$ is a minimizer of $g$ we deduce that $D^{3}g(\bar y)[u^{\otimes 3}]$ is zero for all $u$. As shown in~\cite[Equation (1.2)]{nesterov2021implementable} and \cite[Theorem 1]{Banach1938}, this further implies that $D^3 g(\bar y) = 0$.
			
			{\color{blue} Thus, we have shown that there exist $C > 0$ such that $g(x) = O(\|x - \bar y\|^4)$ for $x \in \mathbf{E}$ and $\bar y \in S$ near $\bar x$. Since $\bar y \in P_S(x) \rightarrow \bar x$ as $x \rightarrow \bar x$, it follows that $g(x) = O(\dist(x, S)^4)$ for all $x$ near $\bar x$. In particular, for all $y \in \cM$ near $\bar x$, we have
				$$f_T(y)-f^*= g(y)  = O(\dist^4(y, S)),$$
				as claimed.}
		\end{proof}
		
		Proposition~\ref{prop:rav_four} assumes that a ravine exists, but existence is automatic if $f$ is sufficiently smooth, as shown in Corollary~\ref{cor:morse_ravin_exist}. For ease of future reference, we record this corollary now: any sufficiently smooth function with constant rank Hessian on its solution set $S$, and fourth-order growth away from $S$ admits a (Morse) ravine satisfying Assumption~\ref{assum:tangent}.
		
		\begin{corollary}[Morse ravine under fourth order growth]\label{cor:morseravineexistsfourth}
			Let $f$ be a $C^{p+1}$-smooth function for $p\geq 5$ around a minimizer $\bar x$ and let $S$ be the set of minimizers of $f$. Suppose that $\nabla^2 f$ has constant rank on $S$ near $\bar x$ and that there exists a constant $\dlb >0$ satisfying the lower growth condition 
			\begin{equation}\label{eqn:basic_growth_cor_lower2}
				\dlb\cdot \dist^4(x, S) \le f(x) - f^*,
			\end{equation}
			for all $x$ near $\bar x$. Then $f$ admits a $C^p$ Morse ravine around $\bar x$ satisfying 
			Assumption~\ref{assum:tangent}.
		\end{corollary}
		\begin{proof}
			This follows directly from Corollary~\ref{cor:morse_ravin_exist} and Proposition~\ref{prop:rav_four}.
		\end{proof}

		In the final part of this section, we prove that the Assumption~\ref{assum:tangent} holds for $C^3$ semialgebraic functions defined on $\RR^2$, as long as the Hessian is nondegenerate at the minimizer. Recall that a function $f$ is semialgebraic if its graph can be written as a finite union of sets each cut out by finitely many polynomial inequalities. We refer the reader to the introductory lecture notes  \cite{Coste-semi} or the influential monograph \cite{bochnak2013real} on the subject.
		\begin{theorem}[Semialgebraic functions on $\RR^2$ admit a ravine]
			Let $f:\RR^2\to \RR$ be a $C^{\infty}$-smooth semialgebraic function and let $S$ be the set of minimizers of $f$.
			Then around any $\bar x \in S$ such that $\nabla^2 f(\bar x)$ is nonzero, function $f$ admits a $C^{\infty}$ ravine satisfying Assumption~\ref{assum:tangent}.
		\end{theorem}
		\begin{proof}
			The case where $\nabla^2 f(\bar x)$ is full rank is immediate, so we may assume that $\nabla^2 f(\bar x)$ is degenerate. 
			In this case, there exists a $1$-dimensional $C^{\infty}$ Morse ravine $\cM$ of $f$ at $\bar x$. By standard quantifier elimination, the Morse ravine $\cM$ is a semialgebraic set.
			Since $\cM$ is $1$-dimensional, there exists a $C^{\infty}$-smooth semialgebraic curve $\phi \colon \RR \rightarrow \cM$ such that $\phi(0) = \bar x$ and $\nabla \phi(t) \neq 0$ for all $t$ near $0$ that locally parametrizes $\cM$ near $\bar x$.
			In particular, we have 
			$$
			\|\phi(t) - \phi(s)\| = \Theta(1) \cdot |t-s|
			$$
			for all $t,s$ near $0$.
			Consequently, for all $t$ near $0$ and small neighborhood $X$ of $\bar x$ we have 
			\begin{align*}
				\dist(t, \phi^{-1}(S\cap X)) &= \min_{s\in \phi^{-1}(S\cap X)} |t-s| \\
				&=  \Theta(1) \cdot\min_{s\in \phi^{-1}(S\cap X)} \|\phi(t)-\phi(s)\| \\
				&= \Theta(1) \cdot \dist(\phi(t), S)
			\end{align*}
			Now notice that since $f$ is semialgebraic, and $g = f\circ \phi$ is semialgebraic, there exists a neighborhood $U$ of $0$ such that $J:=U\cap \phi^{-1}(S\cap X)$ is an interval containing $\{0\}$. If $0$ lies in the interior of $J$, then $S$ and $\cM$ locally coincide near $\bar x$, so that $f_T = f\circ P_\cM$ is constant near $\bar x$ and therefore Assumption~\ref{assum:tangent} is trivially satisfied. There are two remaining cases, namely that $J=\{0\}$ or that zero is a boundary point of $J$. We focus on the former since the argument for the latter is completely analogous.

			Thus for the remainder of the proof we suppose that  that $U \cap \phi^{-1}(S\cap X) = \{0\}$. Since $g$ is a $C^{\infty}$ semialgebraic function that is minimized at zero, there exists $c > 0$ and an even integer $p > 1$ satisfying 
			\begin{align*}
				g(t) = c t^p + o(1) \cdot t^{p}\\
				g'(t) = cp t^{p-1} + o(1) \cdot t^{p-1},	
			\end{align*}
			for all $t$ in $U$. Consequently, we have
			\begin{align*}
				f(\phi(t)) - f^* &= g(t) = c t^p + o(1) \cdot t^{p} = \Theta(1)\cdot \|\phi(t)\|^p + o(1) \cdot \|\phi(t)\|^{p}
			\end{align*}
			for all $t$ in $U$. We claim that this expansion implies that Assumption~\ref{assum:tangent} holds. Indeed, this equality is clearly a restatement of the growth condition in Part~\ref{item: localgrowthconditionravine} of Assumption~\ref{assum:tangent}. Thus, it remains to verify the final two conditions in Assumption~\ref{assum:tangent}.\footnote{One might be tempted here to apply Theorem~\ref{thm:simplified_assumpt} and conclude that Assumption~\ref{assum:tangent} holds. However, the ravine $\cM$ is not necessarily $C^{p+1}$ smooth, so we cannot apply it directly.}
			
			Second, we verify the gradient bound in Part~\ref{item: localsizeofgdravine} of Assumption~\ref{assum:tangent}. To that end, observe that $T_\cM(\phi(t)) = \range(\nabla \phi(t))$ and that $\sigma_{\min} (\nabla \phi(t))$ is bounded away from zero near for $t$ near zero. Consequently, since $\nabla f_T(\phi(t)) \in T_\cM(\phi(t))$ we have
			\begin{align*}
				\|\nabla f_T(\phi(t))\| &= \|P_{T_{\cM}(\phi(t))}\nabla f(\phi(t))\| \\
				&= O(\|\nabla \phi(t)^\top P_{T_{\cM}(\phi(t))} \nabla f(\phi(t))\|) \\
				&= O(\|\nabla \phi(t)^\top \nabla f(\phi(t))\|) \\
				&=  O(\|g'(t)\|)\\
				&= O(t^{p-1}) \\
				&= O(\|\phi(t)\|^{p-1})\\
				&= O(\dist^{p-1}(\phi(t), S))
			\end{align*}
			for all $t$ near $0$. This proves the gradient bound.
			
			Finally, we verify the aiming condition in Part~\ref{item: localaimingtowardsolutionravine} of Assumption~\ref{assum:tangent}.
			Indeed, we have $g'(t) = cp\cdot t^{p-1} + o(1)\cdot t^{p-1}$, and therefore 
			\begin{align*}
				g(t) &= c t^p + o(1) \cdot t^{p}\leq cp t^p+ o(1) \cdot t^{p} \leq g'(t)t + o(1) \cdot t^{p}.
			\end{align*}
			Thus, to prove the aiming condition, we relate $g'(t)t$ to $\dotp{\nabla f_T(\phi(t)), \phi(t)}$: 
			\begin{align*}
				g'(t)t &= \dotp{\nabla f(\phi(t)), \nabla \phi(t)t} \\
				&= \dotp{P_{\cT_{\cM}(\phi(t))}\nabla f(\phi(t)), \nabla \phi(t)t} \\
				&= \dotp{\nabla f_T(\phi(t)), \nabla \phi(t)t}\\
				&= \dotp{\nabla f_T(\phi(t)), \phi(t)} + \dotp{\nabla f_T(\phi(t)), \nabla \phi(t)t - \phi(t)} \\
				&= \dotp{\nabla f_T(\phi(t)), \phi(t)} + \dist^{p-1}(\phi(t), S) \cdot  O(t^2) \\
				&= \dotp{\nabla f_T(\phi(t)), \phi(t)} + O(\dist^{p+1}(\phi(t), S)).
			\end{align*}
			Putting it all together, we have 
			\begin{align*}
				f(\phi(t)) - f^\ast = g(t)	&\leq g'(t)t + o(1) \cdot t^{p} \\
				&= \dotp{\nabla f_T(\phi(t)), \phi(t)} + O(\dist^{p+1}(\phi(t), S)) + o(1) \cdot t^{p} \\
				&\leq \dotp{\nabla f_T(\phi(t)), \phi(t)} + o(1) \cdot \dist^{p}(\phi(t), S),
			\end{align*}
			for all $t$ near $0$, as desired.
		\end{proof}

		\section{Algorithms and main convergence theorem}
		We are now ready to state the gradient descent algorithm with adaptive stepsizes and analyze its performance. 
		This paper presents two variants of the algorithm: one which assumes knowledge of the optimal value $f^*$ {\color{blue}(i.e., Algorithm~\ref{alg:GD-Polyak})} and one which only assumes knowledge of a lower bound $f_0 \leq f^\ast$. 
		The former has better local oracle complexity than the latter -- $O(\log(1/\varepsilon)^2)$ versus $O(\log(1/\varepsilon)^3)$ -- but may not be implementable in general.
		
		\subsection{Algorithm with knowledge of $f^\ast$}
		
		%
		
		{\color{blue} When $f^\ast$ is known, we apply Algorithm~\ref{alg:GD-Polyak} from the introduction.} The intuition behind the algorithm is as follows. Due to the ravine's defining property, gradient descent approaches the ravine at a linear rate up to a tolerance controlled by the iterates' suboptimality. Once this tolerance is reached, the function behaves similarly to its tangent part $f\circ P_{\cM}$, which is assumed to have constant order growth on the manifold. The Polyak gradient step makes significant progress towards the optimal solution for such functions. To see this key point, let us look at a simplified setting of minimizing a smooth convex function $g$ on $\E$ that satisfies $g(x)- g^*=\Theta(1)\cdot\|x-\bar x\|^p$, where $\bar x$ is its minimizer, and $g^*$ is its minimal value. Then the  Polyak step $x^+:=x-\frac{g(x)-g^*}{\|\nabla g(x)\|^2}\nabla g(x)$ satisfies:
		\begin{align*}
			\tfrac{1}{2}\|x^+-\bar x\|^2&= \tfrac{1}{2}\|x-\bar x\|^2-\tfrac{g(x)-g^*}{\|\nabla g(x)\|^2}\langle \nabla g(x),x-\bar x\rangle+\tfrac{1}{2}\left(\frac{g(x)-g^*}{\|\nabla g(x)\|}\right)^2\\
			&\leq\tfrac{1}{2}\|x-\bar x\|^2-\tfrac{1}{2}\left(\frac{g(x)-g^*}{\|{\color{blue}\nabla  g(x)}\|}\right)^2.
		\end{align*}
		Now constant order growth implies $g(x)-g^*=\Theta(1)\cdot\|x-\bar x\|^p$ and $\|\nabla g(x)\|=O(\|x-\bar x\|^{p-1})$ and therefore there exists a constant $c\in (0,1)$ satisfying $\|x^+-\bar x\|\leq c\cdot \|x-\bar x\|$.
		That is, a single Polyak step shrinks the distance to the solution by a constant factor. Despite the simplicity of this argument, extending it in our setting presents numerous technical challenges: controlling the progress of the constant step gradient method towards the ravine, lack of convexity, constant order growth holding only along the ravine, etc. The end result is the following theorem—the main result of the paper.

		\begin{theorem}[Nearly linear rate]\label{thm: main_thm_intro}
			Let $f$ be a $C^{2}$-smooth function with a set of minimizers $S$. Suppose that for some point $\bar x\in S$,  Assumption~\ref{assum:tangent} holds with respect to a ravine $\cM$ and such that the projection $P_{\cM}$ is $C^2$-smooth near $\bar x$. Then there exist constants $\delta_0,\eta_0,c,C>0$ such that for any initial point $x_0\in {\color{blue} B_{\delta_0}(\bar x)}$, any stepsize $\eta\in (0,\eta_0)$, and any iteration counters $K, I\in \mathbb{N}$, the point $\xout=\gdpk(x_0, \gdstep, \gdrds, I)$ returned by Algorithm~\ref{alg:GD-Polyak} satisfies 
			$$
			f(\xout) - f^* \le Ce^{-c\cdot\eta \cdot\min\{K, I\}},
			$$
			and the total number of gradient and function evaluations is bounded by $I\cdot (K+1)$.
		\end{theorem}
		
		In particular, given a target accuracy $\varepsilon>0$, we may set $I=K=\lceil(c\eta)^{-1}\log( C/\varepsilon)\rceil$, and then Algorithm~\ref{alg:GD-Polyak} will find a point $\xout$ satisfying $f(\xout) - f^* \le \varepsilon$ after using at most $O(\log^2(\frac{1}{\varepsilon}))$ gradient and function evaluations.

		Finally, we note that when the solution set $S$ is compact, the conclusion of Theorem~\ref{thm: main_thm_intro} holds if we initialize in a sufficiently small tube $\dist(x_0, S) < \delta$ around the set of minimizers $S$, rather than in a ball around a fixed minimizer $\bar x$.

		\begin{corollary}[Nearly linear rate under compactness]\label{cor: compactravineconvergence}
			Let $f$ be a $C^{2}$-smooth function with a compact set of minimizers $S$. 
			Suppose that for any point $\bar x\in S$,  Assumption~\ref{assum:tangent} holds with respect to a ravine $\cM_{\bar x}$ and such that the projection $P_{\cM_{\bar x}}$ is $C^2$-smooth near $\bar x$. 
			Then there exist constants $\delta_0,\eta_0,c,C>0$ such that for any initial point $x_0$ satisfying $\dist(x_0,S)\leq \delta_0$, any stepsize $\eta \le \eta_0$, and any iteration counters $K, I\in \mathbb{N}$, the point $\xout=\gdpk(x_0, \gdstep, \gdrds, I)$ returned by Algorithm~\ref{alg:GD-Polyak} satisfies 
			$$
			f(\xout) - f^* \le Ce^{-c\cdot\eta \cdot\min\{K,I\}},
			$$
			and the total number of gradient and function evaluations is bounded by $I\cdot (K+1)$.
		\end{corollary}

		\subsection{Algorithm with a lower bound on $f^\ast$}
		
		While precise knowledge of $f^\ast$ may be unavailable, we often know a lower bound $f_0 \leq f^\ast$ on the minimal objective value.
		For example, in data fitting or machine learning problems, the function $f$ may represent a measurement misfit penalty or a proxy for model accuracy, which always has a lower bound $0 \leq f^\ast$.
		Following the technique introduced in~\cite{hazan2019revisiting}, we now introduce an adaptive version of $\gdpk$, which only requires a lower estimate $f_0 \le  f^\ast$. 
		The algorithm restarts a modified version of $\gdpk$ with a new estimate $f_n$ of $f^\ast$ at each step. 
		The modified version of $\gdpk$ is identical to the original except that we use a Polyak stepsize in which $f^\ast$ is replaced by the current estimate $f_n$ and the overall stepsize is scaled down by 2.
		
		\begin{algorithm}[H]
			\caption{$\gdpklb(x_{0}, \gdstep, \gdrds, I, J, f_0$)}
			\label{alg:GD-Polyak_LB}
			\begin{algorithmic}[1]
				\State {\bfseries Input}   $x_0, \gdstep,  K, I, J, f_0$.
				\For {$j = 1,\ldots, J$}
				\State $x_{0,j-1} = x_0$
				\For {$i = 1,\ldots, I$}
				\State $\tilde x_{i, j-1} = \gd(x_{i-1,j-1}, \gdstep, \gdrds)$
				\State $x_{i,j-1} = \tilde x_{i,j-1} -\frac{f(\tilde x_{i,j-1}) - f_{j-1}}{2\|\nabla f(\tilde x_{i,j-1})\|^2} \nabla f(\tilde x_{i,j-1})$.
				\EndFor
				\State $x_{j} = \argmin\{f(x_{i,j-1}), f(\tilde x_{i,j-1}) \colon i = 1,\ldots, I\}$
				\State $f_j = \frac{f_{j-1} + f(x_j)}{2}$.
				\EndFor
				\State $x_{\mathtt{out}} = \argmin\{f(x_j) \colon j = 1,\ldots, J\}$
				\State \Return $\xout$
			\end{algorithmic}
		\end{algorithm}
		
		Let us briefly explain the principle underlying Algorithm~\ref{alg:GD-Polyak_LB}. 
		First, it is straightforward to check from the proof Theorem~\ref{thm: main_thm_intro}, that if one runs the $\gdpk$ with an approximation of the true Polyak stepsize, which is at most the Polyak stepsize and at least half the Polyak stepsize, then the guarantees of Theorem~\ref{thm: main_thm_intro} continue to hold. 
		Thus, within the context of Algorithm~\ref{alg:GD-Polyak_LB}, if for some $j$, we have
		\begin{align}\label{eq:polyak_gap_estimate}
			\frac{f(\tilde x_{i,j-1}) - f^\ast }{2\|\nabla f(\tilde x_{i,j-1})\|^2}
			\leq \frac{f(\tilde x_{i,j-1}) - f_{j-1}}{2\|\nabla f(\tilde x_{i,j-1})\|^2} \leq \frac{f(\tilde x_{i,j-1}) - f^\ast }{\|\nabla f(\tilde x_{i,j-1})\|^2} \qquad  \text{for $i =1, \ldots, I$},
		\end{align}
		then $f(x_n) - f^* \le Ce^{-c\cdot\eta \cdot\min\{K,T\}}$ for appropriate constants $C, c > 0$.
		
		We claim that even if~\eqref{eq:polyak_gap_estimate} fails for all $j$, we can still infer the following bound: 
		$$
		f(x_{\mathtt{out}}) - f^\ast \leq 2^{-(J-1)}(f^\ast - f_0).
		$$
		Indeed, let us suppose that there is no $j$ for which~\eqref{eq:polyak_gap_estimate} holds. 
		We claim that $f_j \leq f^\ast$ for all $1 \leq j \leq J$. If not, there exists a first index $j$ such that $f_{j} > f^\ast$. 
		By definition, the lower bound in~\eqref{eq:polyak_gap_estimate} holds for index $j-1$ since $f_{j-1} \leq f^\ast$. 
		In addition, since~\eqref{eq:polyak_gap_estimate} fails, there exist $1 \leq i \leq I$ such that 
		$$
		f(\tilde x_{i,j-1}) - f_{j-1} > 2(f(\tilde x_{i,j-1}) - f^\ast).
		$$
		Consequently, $f^\ast > (f(\tilde x_{i,j-1}) + f_{j-1})/2 \geq f_{j}$, which is a contradiction.
		Thus, we have $f_{j} \leq f^\ast$ for all $1 \leq j \leq J$. 
		Now since $f_{j-1} \leq f^\ast \leq f(x_j)$, we have 
		$
		0 \leq f^\ast - f_{j}  \leq (f^\ast - f_{j-1})/2
		$
		for each $n$.
		Iterating this bound, we find that 
		$$
		f(x_{\mathtt{out}}) \leq f(x_J) = 2f_{J} - f_{J-1} \leq f^\ast + (f^\ast - f_{J-1}) \leq f^\ast + 2^{-(J-1)}(f^\ast - f_{0}).
		$$
		Thus, we have proved the desired bound.

		Therefore, this argument shows that Algorithm~\ref{eq:polyak_gap_estimate} exhibits the following local behavior. We omit the formal proof for simplicity. 
		
		\begin{theorem}[Nearly linear rate]\label{thm: main_thm_intro_adaptive}
			Let $f$ be a $C^{2}$-smooth function with a set of minimizers $S$. Fix a lower bound $f_0 \leq f^\ast$. Suppose that for some point $\bar x\in S$,  Assumption~\ref{assum:tangent} holds with respect to a ravine $\cM$ and such that the projection $P_{\cM}$ is $C^2$-smooth near $\bar x$. Then there exist constants $\delta_0,\eta_0,c,C>0$ such that for any initial point $x_0\in B_{\delta}(\bar x)$, any stepsize $\eta\in (0,\eta_0)$, and any iteration counters $K,I,J\in \mathbb{N}$, the point $\xout=\gdpklb(x_0, \gdstep, \gdrds, I, J, f_0)$ returned by Algorithm~\ref{alg:GD-Polyak_LB} satisfies 
			$$
			f(\xout) - f^* \le \max\{Ce^{-c\cdot\eta \cdot\min\{K,I\}}, 2^{-(J-1)}(f^\ast - f_0)\},
			$$
			and the total number of gradient and function evaluations is bounded by $K\cdot J\cdot (I+1)$.
		\end{theorem}
		
		In addition to the above theorem, we could state and prove a result similar to Corollary~\ref{cor: compactravineconvergence} for Algorithm~\ref{alg:GD-Polyak_LB}. For brevity, we omit the statement.

		The rest of the section is devoted to proving Theorem~\ref{thm: main_thm_intro} and Corollary~\ref{cor: compactravineconvergence}. 
		
		\subsection{Key ingredients and proof of Theorem~\ref{thm: main_thm_intro} and Corollary~\ref{cor: compactravineconvergence}}
		This section contains the key ingredients we will need for the proof of Theorem~\ref{thm: mainconvergencelocal}. {\color{blue}In order to state the theorem, we must first state the following corollary, which includes key constants appearing in the theorem. The corollary is essentially a combination of Assumption~\ref{assum:tangent} and Theorem~\ref{thm:key_properties}.} Namely every occurrence of $\|\nabla f_T(x)\|$ in may be replaced by $\dist^{p-1}(P_{\cM}(x),S)$, due to Assumption~\ref{assum:tangent}. We record this observation in the following proposition for ease of reference.
		
		\begin{corollary}\label{cor: localconditionsforgdpolyak}(Ravines satisfying Assumption A)
			Let $f:\RR^d \rightarrow \RR$ be a function with a non-empty set of minimizer $S$. Suppose that Assumption~\ref{assum:tangent} holds for a fixed minimizer $\bar x \in S$ and a ravine $\cM$. Assume, moreover, that $P_\cM$ is $C^2$ smooth near $\bar x$. Then there exists a neighborhood $U$ of $\bar x$ such that the following holds:
			\begin{enumerate}
				\item {\bf (Aiming towards ravine)}\label{item: localaimingtowardmanifold} There exists $\gammalb >0$ such that for any $x\in U$,
				\begin{align}\label{eqn: aiming_to_ravine_cor_I}
					\dotp{\nabla f_N(x), x - P_\cM(x)} \ge \gammalb \dist^2(x,\cM) + o(1) \cdot\dist(x,\cM)\cdot\dist^{p-1}(P_\cM(x),S)
				\end{align}
				and 
				\begin{align}\label{eqn: aiming_to_ravine_cor_II}
					\dotp{\nabla f_N(x), x - P_\cM(x)} \ge \gammalb \dist^2(x,\cM) + o(1) \cdot\dist^{2p-2}(P_\cM(x),S).
				\end{align}
				\item {\bf(Aiming towards solution)}\label{item: localaimingtowardsolution} For any $ y \in U \cap \cM$, we have
				\begin{equation}\label{eqn:est_taylor}
					f_T(y) - f^* \le \dotp{\nabla f_T(y), y - \bar y} + o(1)\cdot \dist^p(y, S),
				\end{equation}
				holds for any $\bar y\in P_{S}(y)$.
				\item {\bf(Size of gradients)}\label{item: localsizeofgd} There exists $\gammaub >0$ such that for any $x\in U$, it holds:
				$$
				\|\nabla f_N(x)\| \le \gammaub \dist(x,\cM) + o(1)\cdot \dist^{p-1}(P_\cM(x), S).
				$$
				There exist $\betalb,\betaub >0$ such that for any $y \in U \cap \cM$, we have
				\begin{equation}\label{eqn:control_tan_grad}
					\betalb \dist^{p-1}(y, S)\le \|\nabla f_T(y)\| \le \betaub \dist^{p-1}(y, S).
				\end{equation}
				\item {\bf(Projected gradient)}\label{item: localstronga} For any $x \in U$, we have
				$$
				\|P_{T_{\cM(P_\cM(x))}} \nabla f_N(x)\| = o(1)\cdot  \dist(x,\cM).
				$$
				\item {\bf (Growth condition)}\label{item: localgrowthcondition} There exist $C_\ub > 0$ such that for any $x \in U$  we have
				$$ |f_N(x)| \le C_\ub\dist^2(x, \cM) +o(1)\cdot\dist(x,\cM)\dist^{p-1}(P_\cM(x), S). 
				$$
				There exist $\dlb$ and $D_\ub$ such that for any $y \in U\cap \cM$  we have
				\begin{equation}\label{eqn:func_control}
					\dlb\dist^p(y, S)\leq f_T(y) - f^* \le D_\ub\dist^p(y, S). 
				\end{equation}
			\end{enumerate}
		\end{corollary}
		\begin{proof}
			First setting $y=P_{\cM}(x)$ and using boundedness of $\nabla P_{\cM}$ we compute
			\begin{align}
				\|\nabla f_T(x)\|&=\|\nabla P_{\cM}(x)\nabla f(y)\|\notag\\
				&\leq \|\nabla f_T(y)\|+\|(\nabla P_{\cM}(y)-\nabla P_{\cM}(x))\nabla f(y)\|\label{eqn:triangle2}\\
				&= \|\nabla f_T(y)\|+\|(\nabla P_{\cM}(y)-\nabla P_{\cM}(x)) \nabla f_T(y)\|\label{eqn:insert_proj}\\
				&=O(\|\nabla f_T(y)\|)\notag\\
				&=O(\dist^{p-1}(y,S)), \label{eqn:dist_soln_grad}
			\end{align}
			where \eqref{eqn:triangle2} follows from the triangle inequality, equation \eqref{eqn:insert_proj} follows from the expressions $\nabla P_{\cM}(y)N_{\cM}(y)=\nabla P_{\cM}(x)N_{\cM}(y)=0$, and \eqref{eqn:dist_soln_grad}
			follows from Assumption~\ref{assum:tangent}(\ref{item: localsizeofgdravine}). Combining this estimate,   Assumption~\ref{assum:tangent}, and Theorem~\ref{thm:key_properties} yields all of the claims, except~\eqref{eqn: aiming_to_ravine_cor_II} and the left side of \eqref{eqn:control_tan_grad}. Let us argue these two claims separately. The claim \eqref{eqn: aiming_to_ravine_cor_II} follows directly from equation~\eqref{eqn: aiming_to_ravine_cor_I} and Young's inequality:
			$$
			\dist(x,\cM) \dist^{p-1}(P_\cM(x),S) \le \frac{1}{2}\paren{\dist^2(x,\cM) + \dist^{2p-2}(P_\cM(x),S)}.
			$$
			The left side of 	\eqref{eqn:control_tan_grad} follows from Item~\ref{item: localaimingtowardsolution}, the Cauchy-Schwartz inequality, and the assumption $f(y)-f^*=\Theta(1)\cdot \dist^p(y,S)$.
			Thus, the proof is complete.
		\end{proof}
		
		{\color{blue} We are now ready to state our main convergence result (Theorem~\ref{thm: mainconvergencelocal}), which immediately implies the announced Theorem~\ref{thm: main_thm_intro}. A complication in the proof is that we must argue that the iterates stay near the initial point, which we do through a careful inductive argument. 
			
			\begin{theorem}[Main convergence theorem]\label{thm: mainconvergencelocal}
				Suppose that Assumption~\ref{assum:tangent} holds for some fixed minimizer $\bar x$ of $f$ and $P_\cM$ is $C^2$ smooth near $\bar x$. Let $\gammalb, \gammaub,\betaub, C_\ub, \dlb, \dub$ be parameters from Corollary~\ref{cor: localconditionsforgdpolyak}.  Let $\deltaU >0$ be such that  $U = B(\bar x, \deltaU)$ satisfies the requirements from Lemma~\ref{lem: innerlooplocal}. Let $\ckl$ be the constant  from Lemma~\ref{lem: KLlocal}, and let $\lipf$ and $L$ be the Lipschitz constants of $f$ and $\nabla f$ on $U$, respectively.  Let $K,T,\eta$ be our algorithm parameters and $\tilde x_i, x_i, \xout$ be defined as in Algorithm~\ref{alg:GD-Polyak} when we set input to be $(x_0, \eta, K,I)$.  Suppose that $\eta \le \min\left\{\frac{\gammalb}{2\gammaub^2}, \frac{1}{\betaub}, \frac{1}{L}\right\}$. Then there exists a constant $\deltagdpk > 0$ such that if $x_0 \in B(\bar x, \deltagdpk)$, 
				at least one of the following holds.
				\begin{enumerate}
					\item \label{item: mainthmitem1local}$f(\xout) - f^* \le 162C_\ub\paren{1- \frac{\eta\gammalb}{4}}^{2K} \deltagdpk^2 + D_\ub\paren{1-\frac{\eta \gammalb}{4}}^{\frac{Kp}{p-1}} \paren{\frac{1800\gammaub\deltagdpk}{\dlb}}^{\frac{p}{p-1}}.$
					\item \label{item: mainthmitem2local} $f(\xout) - f^* \le  2\cdot 3^{2p-2} \cdot C_\ub\paren{\frac{\dlb}{200\gammaub}}^2 \paren{1- \frac{\dlb^2}{20\betaub^2}}^{(2p-2)(I-1)}\deltagdpk^{2p-2} + 3^p\cdot D_\ub \paren{1- \frac{\dlb^2}{20\betaub^2}}^{p(I-1)}\deltagdpk^p$.
				\end{enumerate}
			\end{theorem}
			
			With this result in hand, the proof of Corollary~\ref{cor: compactravineconvergence} is now immediate.
			
			\begin{proof}[Proof of Corollary~\ref{cor: compactravineconvergence}]
				By Theorem~\ref{thm: mainconvergencelocal}, for any $\bar x\in S$, there exists a radius $\delta_{\bar x}$ such that when $\|x_0 -\bar x\| \le  \delta_{\bar x}$, the desired conclusion holds with constants $C_{\bar x}$, $c_{\bar x}$, and $\eta_{\bar x}$. By compactness of $S$, there exists a finite index set $I\subset S$ and $\delta>0$ such that 
				$$
				S \subset \{x \colon \dist(x,S) < \delta\} \subset \bigcup_{\bar x \in I} B(\bar x, \delta_{\bar x}).
				$$
				We take $C = \max_{\bar x \in I} C_{\bar x} > 0$, $c = \min_{\bar x \in I} c_{\bar x} >0$, and $\eta_0 = \min_{\bar x \in I} \eta_{\bar x} > 0$. For any $x_0$ with $\dist(x_0, S) < \delta$, there exists some $\bar x \in I$ such that $\|x_0 - \bar x\| \le \delta_{\bar x}$. Therefore, for any $\eta \le \eta_0 \le \eta_{\bar x}$, we have
				$$
				f(\xout) - f^* \le C_{\bar x} e^{-c_{\bar x} \eta \min\{K,I\}} \le C e^{-c \eta \min\{K,I\}},
				$$
				as desired. 
			\end{proof}
			
			We now turn our attention to the proof of Theorem~\ref{thm: mainconvergencelocal}. In our proof of the result, we need several auxiliary lemmas, which we now state. Afterwards, we prove Theorem~\ref{thm: mainconvergencelocal} in Section~\ref{sec:proofmaintheorem}
		}
		\subsubsection{Auxiliary lemmas for Theorem~\ref{thm: mainconvergencelocal}}
		
		In the rest of this section, we assume the setting of Corollary~\ref{cor: localconditionsforgdpolyak} and fix all the constants and neighborhoods in the Corollary. We also assume that $P_\cM$ is $C^2$ smooth on $U$, and we let $C_\cM$ be the Lipschitz constant of $\nabla P_\cM$ on $U$ with respect to the operator norm. We begin with the following useful lemma. The proofs of the rest of the results are deferred to Section~\ref{sec:proofs_technical_lemmas}.

		\begin{lemma}\label{lem: propertyofdecomplocal}
			
			The following properties hold after shrinking $U$ if necessary.
			\begin{enumerate}
				
				\item \label{item: propertyofdecomplipschitzgradientlocal}For any $x \in U$, we have 
				$$
				\|\nabla f_T(x) - \nabla f_T(P_\cM(x))\| \le C_\cM\cdot \dist(x,\cM)\cdot \|\nabla f_T(P_\cM(x))\|.
				$$
				\item  \label{item: propertyofdecompsizeofgradientlocal} For any $x \in U$, we have 
				$$\frac{99}{100} \|\nabla f_T(P_\cM(x))\| \le \|\nabla f_T(x)\| \le  \frac{101}{100} \|\nabla f_T( P_{\cM}(x))\|.$$
				\item \label{item: propertyofdecompbregularitylocal}For any $y \in U\cap \cM$, we have $\betaub \ge \frac{9}{10} \dlb$ and
				$$
				\|\nabla f_T(y)\| \|y - \bar y\|\ge \dotp{\nabla f_T(y), y - \bar y} \ge  \frac{9}{10}(f(y) - f^*)
				$$
				for any $\bar y\in P_S(y)$.
				\item \label{it:lower_bound_grad_norm} For any $x \in U$, we have
				$$ \dist(x,\cM) \le \Theta(1) \|\nabla f_N(x)\| + o(1) \dist^{p-1} (P_{\cM}(x),S).$$
				
				\item\label{it:norm_comp}  For any $x \in U$, we have\begin{equation*}
					\tfrac{1}{1+o(1)}\cdot \|\nabla f(x)\|^2 = \|\nabla f_N(x)\|^2 + \|\nabla f_T(x)\|^2.
				\end{equation*}
			\end{enumerate}
			
		\end{lemma}
		
		\begin{lemma}\label{lem: KLlocal}
			The function $f$ satisfies {\L}ojasiewicz inequality with exponent $\frac{p-1}{p}$                                                                                  at $\bar x$, that is there exists a constant $\ckl$ such that by shrinking $U$ if necessary, we have 
			\begin{align*}
				\ckl \|\nabla f(x)\| \ge (f(x) - f^*)^{\frac{p-1}{p}}, \qquad \forall x\in U.
			\end{align*}
		\end{lemma}

		The following lemma shows that the Polyak step can effectively reduce the distance from $y$ to the solution set when a point is close to the ravine. The Polyak step, however, can overshoot in the normal direction to the ravine. The lemma also shows that the iterate will not go too far from the manifold.
		\begin{lemma}\label{lem: progressinypolyaklocal}
			For any $x \in U$, define the points $x_+ = x -   \frac{f(x) - f^*}{\|\nabla f(x)\|^2} \nabla f(x)$, $y = P_\cM(x)$ and  $y_+ = P_\cM(x_+)$. Suppose that the inequality $\|\nabla f_N(x)\|\le \frac{1}{100}\|\nabla f_T(y)\|$ holds. Then after shrinking $U$ if necessary,  the following estimates hold.
			\begin{enumerate}
				\item (Progress in tangent direction) \label{item: progressinypolyaklocal}	$$\dist(y_+, S) \le  \paren{1-  \frac{1}{10} \frac{\dlb^2}{\betaub^2}}\dist(y, S).$$
				\item  (Bound in normal direction) \label{item: xMafterpolyaklocal}
				$$
				\dist(x_+, \cM) \le  3\cdot\dist(y, S).
				$$
			\end{enumerate}
		\end{lemma}

		%
		%
		%
		%

		The following lemma shows that the distance to the ravine shrinks geometrically until the norms of the gradients of $f_N(x)$ and $f_T(x)$ balance out. Meanwhile, the constant size gradient descent steps do not interfere much with the progress toward the set of minimizers.
		\begin{lemma}\label{lem: innerlooplocal}
			For all sufficiently small $\deltaU > 0$, the following holds.\footnote{Specifically, one must choose $\deltaU$ small enough that the conclusions of Lemmas~\ref{lem: propertyofdecomplocal},~\ref{lem: KLlocal},~\ref{lem: progressinypolyaklocal},~\ref{lem: polyakstepsizeboundlocal},~\ref{lem: GDshrinksdisttoMlocal},~\ref{lem: ychangeingdlocal},~\ref{lem: finitelength} hold.} L Let $U = B(\bar x, \deltaU)$, let $\ckl$ be the constant  from Lemma~\ref{lem: KLlocal}, and let $L$ be the Lipschitz constant of $\nabla f$ in $U$. Choose a stepsize $\eta \le \min\left\{\frac{\gammalb}{2\gammaub^2}, \frac{1}{\betaub}, \frac{1}{L}\right\}$. Consider the sequence $\{x_k\}_{k=0}^{K}$ generated by gradient descent with stepsize $\eta$. Define the projected points $y_k = P_\cM(x_k)$. Then there exists $\rho_0 >0$ such that for any $x_0 \in B(\bar x, \rho_0)$, the following statements are true.
			\begin{enumerate}
				\item \label{item: stayinneighborhood}
				For each $0\le k \le K$, the iterate $x_k$ lies in $U$ and satisfies
				\begin{equation}\label{eqn:KL_bound_needed}
					\|x_k - x_0\| \le \frac{2}{L} \|\nabla f(x_0)\| + 4\ckl p \cdot (f(x_0) - f^*)^{1/p}.
				\end{equation}
				\item \label{item: induction1local} For each $0 \le k \le K$, at least one of the following is true. 
				\begin{enumerate}
					\item \label{item: induction1partalocal}$\dist(x_k, \cM) \le  \frac{\dlb}{200\gammaub} \dist^{p-1}(y_k,S)$.
					\item \label{item: induction1partblocal}$\dist(x_k, \cM) \le \paren{1 - \frac{\eta \gammalb}{4}}^k \dist(x_0,\cM)$.
				\end{enumerate}
				\item \label{item: induction2local}For any $1\le k \le K$, we have $\dist(y_k, S) \le \dist(y_0,S) + \frac{1}{60}\frac{\dlb^2}{\betaub^2} \dist(x_0,\cM)$. 
			\end{enumerate}
			Moreover, item \eqref{item: induction1partalocal} implies $\|\nabla f_N(x_k)\| \le \frac{1}{100} \|\nabla f_T(y_k)\|$.
		\end{lemma}

		We are now ready to prove Theorem~\ref{thm: mainconvergencelocal}.
		\subsubsection{Proof of Theorem~\ref{thm: mainconvergencelocal}}\label{sec:proofmaintheorem}
		\begin{proof}
			By Item~\ref{item: localgrowthcondition} of Corollary~\ref{cor: localconditionsforgdpolyak}, decreasing $\deltaU$ if necessary, we assume that 
			\begin{align}\label{eqn: nonasymptoticupperf1local}
				f_N(x) \le C_\ub \dist^2(x,\cM) + \frac{C_\ub \dlb}{200\gammaub} \dist(x,\cM) \dist^{p-1}(P_\cM(x),S), \qquad \forall x \in B(\bar x, \deltaU).
			\end{align}
			Let $\rho_0$ be from Lemma~\ref{lem: innerlooplocal}. Note that by Lemma~\ref{lem: KLlocal}, for any $x \in U$, we have
			\begin{align}\label{eqn: KLpolyak}
				\frac{f(x) - f^*}{\|\nabla f(x)\|} \le  \ckl (f(x) - f^*)^{1/p}.
			\end{align}
			So there exists $\rho_1 >0$ such that for any $x \in B(\bar x, \rho_1)$ the point $x_+ = x - \frac{f(x) - f^*}{\|\nabla f(x)\|^2}  \nabla f(x)$ lies in  $B(\bar x, \rho_0)$. We set $\deltagdpk$ small enough so that  
			\begin{align}
				3\deltagdpk + 4\ckl p\lipf^{1/p}\deltagdpk^{1/p} + \sum_{l=0}^{\infty}\left( 10\ckl^{1/p}\lipf^{1/p+ 1/p^2}   \paren{1- \frac{\dlb^2}{20\betaub^2}}^{l/p^2}(6\deltagdpk)^{1/p^2}\right) < \min\{\rho_1, \deltaU\}.\label{eqn:delta_small}
			\end{align}
			Suppose that Item~\ref{item: mainthmitem1local} does not hold. {\color{blue} By our choice of $\xout$ in Algorithm~\ref{alg:GD-Polyak}, failure of Item~\ref{item: mainthmitem1local} implies that for any $1\le i \le I$,
				\begin{align}\label{eqn:contradiction_funciton_gap_lb}
					f(\tilde x_i) - f^* > 162C_\ub\paren{1- \frac{\eta\gammalb}{4}}^{2K} \deltagdpk^2 + D_\ub\paren{1-\frac{\eta \gammalb}{4}}^{\frac{Kp}{p-1}} \paren{\frac{1800\gammaub\deltagdpk}{\dlb}}^{\frac{p}{p-1}}.
				\end{align}
			} 
			We will show that Item~\ref{item: mainthmitem2local} holds, which will complete the proof. We set $\tilde y_i := P_\cM(\tilde x_i)$ and $y_i := P_\cM(x_i)$ throughout the proof. 	We will now apply induction to show that for any $1\le i\le I$, we have
			\begin{align}\label{eqn: inductioncontraction}
				\dist(\tilde x_i, \cM) \le  \frac{\dlb}{200\gammaub} \dist^{p-1}(\tilde y_{i},S) \leq	\dist(\tilde y_i, S) \le 3 \paren{1- \frac{\dlb^2}{20\betaub^2}}^{i-1}\deltagdpk,
			\end{align}
			
			\begin{align}\label{eqn: inductiongradientcomparison}
				\|\nabla f_N(\tilde x_i)\| \le \frac{1}{100}\|\nabla f_T(\tilde y_i)\|,
			\end{align}
			and 
			\begin{align}\label{eqn: inductionfinitelength}
				\|\tilde x_i - \bar x\| &\le  3\deltagdpk + 4\ckl p \lipf^{1/p}\deltagdpk^{1/p} + \sum_{l=0}^{i-2} \left(10\ckl^{1/p}\lipf^{1/p+ 1/p^2}  \paren{1- \frac{\dlb^2}{20\betaub^2}}^{l/p^2}(6\deltagdpk)^{1/p^2}\right) \notag\\
				& < \rho_1. 
			\end{align}
			Let us verify the base case $i=1$. To this end, suppose for the sake of contradiction that the first inequality in \eqref{eqn: inductioncontraction} fails, that is:
			\begin{align}\label{eqn: mainfirstlooplocal}
				\dist(\tilde x_1, \cM) > \frac{\dlb}{200\gammaub} \dist^{p-1}(\tilde y_1,S).
			\end{align} 
			Then by Lemma~\ref{lem: innerlooplocal}, we have
			$$
			\dist(\tilde x_1, \cM) \le \paren{1- \frac{\eta\gammalb}{4}}^{K} \dist(x_0,\cM) \le \paren{1- \frac{\eta\gammalb}{4}}^{K} \deltagdpk,
			$$
			and therefore \eqref{eqn: mainfirstlooplocal}  implies
			$$
			\dist(\tilde y_1, S) \le \paren{\frac{200\gammaub\dist(\tilde x_1,\cM)}{\dlb}}^{\frac{1}{p-1}} \le \paren{1-\frac{\eta \gammalb}{4}}^{\frac{K}{p-1}} \paren{\frac{200\gammaub\deltagdpk}{\dlb}}^{\frac{1}{p-1}}.
			$$
			As a result,  we have 
			\begin{align*}
				f(\tilde x_1)- f^* &= f_N(\tilde x_1) + f_T(\tilde x_1) - f^*\\
				&\le C_\ub \dist^2(\tilde x_1, \cM) +\frac{C_\ub \dlb}{200\gammaub} \dist(\tilde x_1,\cM) \dist^{p-1}(\tilde y_1,S) + D_\ub \dist^{p}(\tilde y_1, S)\\
				&\le 2C_\ub\paren{1- \frac{\eta\gammalb}{4}}^{2K} \deltagdpk^2 + D_\ub\paren{1-\frac{\eta \gammalb}{4}}^{\frac{Kp}{p-1}} \paren{\frac{200\gammaub\deltagdpk}{\dlb}}^{\frac{p}{p-1}},
			\end{align*}
			where the first inequality follows from \eqref{eqn: nonasymptoticupperf1local} and \eqref{eqn:func_control}. This implies that {\color{blue} the estimate~\eqref{eqn:contradiction_funciton_gap_lb}} holds, which is a contradiction. So~\eqref{eqn: mainfirstlooplocal} indeed fails to hold. On the other hand, Item~\ref{item: induction2local} of Lemma~\ref{lem: innerlooplocal} and the bound $D_{\lb}^2/60\beta_\ub^2 \leq 1$ from Item~\ref{item: propertyofdecompbregularitylocal} of Lemma~\ref{lem: propertyofdecomplocal}, we have
			$$
			\dist(\tilde y_1, S) \le \dist(y_0, S) + \frac{D_{\lb}^2}{60\beta_\ub^2}\dist(x_0,\cM) \le 3\deltagdpk.
			$$
			Therefore by the failure of~\eqref{eqn: mainfirstlooplocal} and by shrinking $\deltaU$ if necessary, we have 
			$$
			\dist(\tilde x_1, \cM) \le  \frac{\dlb}{200\gammaub} \dist^{p-1}(\tilde y_1,S)  \le \dist(\tilde y_1, S) \le 3\deltagdpk,
			$$
			thereby verifying \eqref{eqn: inductioncontraction}.
			Note moreover that Lemma~\ref{lem: innerlooplocal} ensures the inequality
			$$
			\|\nabla f_N(\tilde x_1)\|  \le \frac{1}{100}\|\nabla f_T(\tilde y_1)\|,
			$$
			thereby verifying \eqref{eqn: inductiongradientcomparison}.
			Moreover, using the triangle inequality, we deduce
			\begin{align*}
				\|\tilde x_1 - \bar x\|&\le \|x_0 -\bar x\| + \|\tilde x_1 - x_0\|\\
				&\le \deltagdpk + \frac{2}{L}\|\nabla f(x_0)\| + 4\ckl p \cdot (f(x_0) - f^*)^{1/p}\\
				&\le 3\deltagdpk + 4\ckl p \lipf^{1/p}\deltagdpk^{1/p},
			\end{align*}
			where the second inequality follows from \eqref{eqn:KL_bound_needed} and the last inequality follows from Lipschitz continuity of $f$ and $\nabla f$. Thus, we have verified all the claims \eqref{eqn: inductioncontraction},\eqref{eqn: inductiongradientcomparison}, and \eqref{eqn: inductionfinitelength}  for the base case $i=1$.
			
			Suppose now as the inductive assumption that \eqref{eqn: inductioncontraction},\eqref{eqn: inductiongradientcomparison}, and \eqref{eqn: inductionfinitelength} hold for any $1\le i \leq i_0$ and will show that they continue to hold for $i=i_0+1$. By the inductive assumption $\|\tilde x_{i_0} - \bar x\| \le \rho_1$ and our choice of $\rho_1$, we have $\|x_{i_0} - \bar x\| \le \rho_0$. So Lemma~\ref{lem: innerlooplocal} applies to the gradient descent sequence initialized at $x_{i_0}$. 
			Consequently, we deduce
			\begin{align*}
				\dist(\tilde y_{{i_0}+1}, S) &\le \dist(y_{i_0},S) + \frac{1}{60}\frac{\dlb^2}{\betaub^2} \dist(x_{i_0},\cM)\\
				&\le \paren{1- \frac{1}{10} \frac{\dlb^2}{\betaub^2}} \dist(\tilde y_{i_0}, S) + \frac{1}{20}\frac{\dlb^2}{\betaub^2} \dist(\tilde y_{i_0},S)\\
				&\le  \paren{1- \frac{1}{20} \frac{\dlb^2}{\betaub^2}} \dist(\tilde y_{i_0}, S)\\
				&\le  3 \paren{1- \frac{\dlb^2}{20\betaub^2}}^{{\color{blue}i_0}}\deltagdpk,
			\end{align*}
			where the first inequality follows from Lemma~\ref{lem: innerlooplocal}, the second inequality follows from Lemma~\ref{lem: progressinypolyaklocal}, and the last inequality follows by the inductive hypothesis \eqref{eqn: inductioncontraction}.
			By Lemma~\ref{lem: progressinypolyaklocal} and the inductive hypothesis \eqref{eqn: inductioncontraction}, we have $\dist(x_{i_0},\cM) \le 3\dist(\tilde y_{i_0}, S)\le 9 \deltagdpk$. Applying exactly the same argument that established the first inequality in \eqref{eqn: inductioncontraction} in the base case but using the inductive assumption instead yields
			$$
			\dist(\tilde x_{i_0+1}, \cM) \le  \frac{\dlb}{200\gammaub} \dist^{p-1}(\tilde y_{i_0+1},S)  \le \dist(\tilde y_{i_0+1}, S)
			$$
			and 
			$$
			\|\nabla f_N(\tilde x_{i_0+1})\| \le \frac{1}{100}\|\nabla f_T(\tilde y_{i_0+1})\|.
			$$
			Therefore, both~\eqref{eqn: inductioncontraction} and~\eqref{eqn: inductiongradientcomparison} hold for $i_0+1$. To show~\eqref{eqn: inductionfinitelength} for $i_0+1$, we note that by shrinking $U$ if necessary, we have
			\begin{align}\label{eqn: xtSandtildextS}
				\dist(x_{i_0}, S)&\le \dist(\tilde x_{i_0}, S) + \|x_{i_0} - \tilde x_{i_0}\| \notag \\
				&\le \dist(\tilde x_{i_0}, S) + \ckl (f(\tilde x_{i_0}) - f^*)^{1/p} \notag\\
				&\le \dist(\tilde x_{i_0}, S) + \ckl \lipf^{1/p} \dist^{1/p}(\tilde x_{i_0},S) \notag\\
				&\le 2\ckl \lipf^{1/p} \dist^{1/p}(\tilde x_{i_0},S),
			\end{align}
			where the second equality follows from~\eqref{eqn: KLpolyak}, the third inequality follows from Lipschitz continuity of $f$, and the last inequality follows by shrinking $U$ if necessary. Therefore, using the triangle inequality, we successively estimate 
			\begin{align*}
				\dist(\tilde x_{i_0+1},S) &\le \dist(\tilde x_{i_0},S) + \|x_{i_0} - \tilde x_{i_0}\| + \|\tilde x_{i_0+1} - x_{i_0}\|\\
				&\le \dist(\tilde x_{i_0},S) + \ckl (f(\tilde x_{i_0}) -f^*)^{1/p} + \frac{2}{L}\|\nabla f(x_{i_0})\| + 4\ckl p \cdot (f(x_{i_0}) - f^*)^{1/p}\\
				&\le \dist(\tilde x_{i_0},S) + \ckl \lipf^{1/p}\dist^{1/p}(\tilde x_{i_0}, S) + 2\dist(x_{i_0},S) + 4\ckl p \lipf^{1/p} \dist^{1/p}(x_{i_0},S)\\
				&\le \dist(\tilde x_{i_0},S) + 5\ckl\lipf^{1/p}\dist^{1/p}(\tilde x_{i_0}, S) +  8p\ckl^{1/p + 1/p^2} \lipf^{1/p+ 1/p^2}\dist^{1/p^2}(\tilde x_{i_0}, S)\\
				&\le \dist(\tilde x_{i_0},S) + 10p\ckl^{1/p+1/p^2} \lipf^{1/p+ 1/p^2}\dist^{1/p^2}(\tilde x_{i_0}, S),
			\end{align*}
			where the second inequality follows from~\eqref{eqn: KLpolyak} and \eqref{eqn:KL_bound_needed}, the fourth inequality follows from~\eqref{eqn: xtSandtildextS}, and the last inequality follows by shrinking $U$ if necessary. 
			Consequently,
			\begin{align*}
				\dist(\tilde x_{i_0+1},S) &\le  \dist(\tilde x_{i_0},S) +  10p\ckl^{1/p+1/p^2} \lipf^{1/p+ 1/p^2}  \paren{1- \frac{\dlb^2}{20\betaub^2}}^{(i_0-1)/p^2}(6\deltagdpk)^{1/p^2}\\
				&\le 3\deltagdpk + 4\lipf^{1/p} +  \sum_{l=0}^{i_0-1}  10p\ckl^{1/p+1/p^2} \lipf^{1/p+ 1/p^2}  \paren{1- \frac{\dlb^2}{20\betaub^2}}^{l/p^2}(6\deltagdpk)^{1/p^2},
			\end{align*}
			where the first inequality follows from $\dist(\tilde x_{i_0}, S) \le \dist(\tilde x_{i_0}, \cM) + \dist(\tilde y_{i_0}, S)$ and the inductive hypothesis, and the second inequality follows from the inductive hypothesis. The above term is smaller than $\rho_1$ by the initial estimate \eqref{eqn:delta_small}. Thus, \eqref{eqn: inductionfinitelength} holds, and the induction is complete. 
			In particular, we have 
			\begin{align}\label{eqn: finalbound}
				\dist(\tilde x_I, \cM) \le \frac{\dlb}{200\gammaub} \dist^{p-1}(\tilde y_I, S)\leq\dist(\tilde y_I, S) \le 3 \paren{1- \frac{\dlb^2}{20\betaub^2}}^{I-1}\deltagdpk.
			\end{align}
			{\color{blue}	Finally,  we have
				\begin{align*}
					f(\xout) - f^\ast &\le f(\tilde x_I) - f^\ast \\
					& \le C_\ub \dist^2(\tilde x_I, \cM) + \frac{C_\ub \dlb}{200\gammaub} \dist(\tilde x_I,\cM) \dist^{p-1}(\tilde y_{I},S) + D_\ub \dist^p(\tilde y_I, S) \\
					&\le 2C_\ub\paren{\frac{\dlb}{200\gammaub}}^2\dist^{2p-2}(\tilde y_{I},S)+ D_\ub \dist^p(\tilde y_I, S),
				\end{align*}
				where the second inequality follows from~\eqref{eqn: nonasymptoticupperf1local} and Item~\ref{item: localgrowthconditionravine} of Assumption~\ref{assum:tangent}, and the last inequality follows from~\eqref{eqn: finalbound}. Item~\ref{item: mainthmitem2local} follows by replacing $\dist(\tilde y_{T},S)$ with the upper bound we obtained in equation~\eqref{eqn: finalbound}.
			}
		\end{proof}

		%
		
		\section{Proof of technical lemmas}\label{sec:proofs_technical_lemmas}
		\subsection{Proof of Lemma~\ref{lem: propertyofdecomplocal}}
		\begin{proof}
			We first prove Item \ref{item: propertyofdecomplipschitzgradientlocal}. To this end,  setting $y = P_\cM(x)$, we compute
			\begin{align*}
				\|\nabla f_T(x)-\nabla f_T(y)\|&=\|(\nabla P_{\cM}(x)-\nabla P_{\cM}(y)) \nabla f(y)\|\\
				&=\|(\nabla P_{\cM}(x)-\nabla P_{\cM}(y)) \nabla f_T(y)\|\\
				&\leq C_{\cM}\|x-y\|\cdot \|\nabla f_T(y)\|.
			\end{align*}
			where the second equality follows from the identities $\nabla P_{\cM}(y)N_{\cM}(y)=\nabla P_{\cM}(x)N_{\cM}(y)=0$. Thus, item  \ref{item: propertyofdecomplipschitzgradientlocal} is proved.
			Item~\ref{item: propertyofdecompsizeofgradientlocal} follows directly from Item \ref{item: propertyofdecomplipschitzgradientlocal}. Item~\ref{item: propertyofdecompbregularitylocal} follows from the Cauchy-Schwarz inequality, Item \ref{item: localaimingtowardsolution} of Corollary~\ref{cor: localconditionsforgdpolyak}, and the assumption $f_T(y)-f^*=\Theta(1)\cdot \dist^p(y,S)$. Item~\ref{it:lower_bound_grad_norm} follows directly from Item~\ref{item: localaimingtowardmanifold} of Corollary~\ref{cor: localconditionsforgdpolyak} and the Cauchy-Schwarz inequality.
			
			It remains to verify Item~\ref{it:norm_comp}. We begin by writing
			$$\|\nabla f(x)\|^2 = \|\nabla f_N(x)\|^2 + \|\nabla f_T(x)\|^2 + 2\dotp{\nabla f_N(x), \nabla f_T(x)}.$$
			Next,  we successively compute 
			\begin{align*}
				|\dotp{\nabla f_N(x), \nabla f_T(x)}| &\leq \|P_{T_\cM(y)}\nabla f_N(x)\|\|\nabla f_T(x)\|\notag \\
				&\leq o(1)\cdot \dist(x,\cM) \|\nabla f_T(x)\| \notag \\
				&\leq o(1)\cdot (\dist^2(x,\cM) + \|\nabla f_T(x)\|^2) \\
				&\leq o(1)\cdot (\|\nabla f_N(x)\|^2+ \|\nabla f_T(x)\|^2),
			\end{align*}
			where the first inequality follows from the inclusion $\nabla f_T(y) \in T_{\cM}(y)$ and Cauchy-Schwarz, the second inequality follows from Item~\ref{item: localstronga} of Corollary~\ref{cor: localconditionsforgdpolyak}, the third inequality follows from Young's inequality, and the final inequality follows from Item~\ref{it:lower_bound_grad_norm} of the present lemma, using the estimates $\|\nabla f_T(x)\|=\Theta(1)\cdot \|\nabla f_T(y)\|=\Theta(1)\cdot \dist^{p-1}(y,S)$, which follow directly from Item \ref{item: propertyofdecompsizeofgradientlocal} and equation \eqref{eqn:control_tan_grad}. This completes the proof.

		\end{proof}
		\subsection{Proof of Lemma~\ref{lem: KLlocal}}
		To simplify notation, we write $y = P_{\cM}(x)$. Using Item~\ref{it:norm_comp} of Lemma~\ref{lem: propertyofdecomplocal}  we obtain 
		\begin{equation}\label{eqn:yeahboy}
			\|\nabla f(x)\|^2 = (1+ o(1)) \|\nabla f_N(x)\|^2 + (1+o(1)) \|\nabla f_T(x)\|^2.\notag
		\end{equation}
		Note the estimates $\|\nabla f_T(x)\|=\Theta(1)\cdot \dist^{p-1}(y,S) =\Theta(1) \cdot (f_T(x) - f^*)^{(p-1)/p}$.
		Moreover, Item \ref{it:lower_bound_grad_norm} of Lemma~\ref{lem: propertyofdecomplocal} reads as: 
		\begin{align*}
			\|\nabla f_N(x)\|&\geq \Theta(1)\cdot \|x-y\|+o(1)\cdot\dist^{p-1}(y,S).
		\end{align*}
		Therefore, continuing with \eqref{eqn:yeahboy} we deduce
		\begin{align*}
			\|\nabla f(x)\|^2&\geq \Theta(1)\left(\|x-y\|^2+ o(1)\|x-y\|\dist^{p-1}(y,S)\right)+\Theta(1)\dist^{2p-2}(y,S)\\
			&\geq \Theta(1)(\|x-y\|^2+\dist^{2p-2}(y,S))\\
			&\geq \Theta(1)(|f_N(x)|+(f_T(x)-f^*)^{\frac{2p-2}{p}})\\
			&\geq \Theta(1)\left(|f_N(x)|^{\frac{2p-2}{p}}+(f_T(x)-f^*)^{\frac{2p-2}{p}}\right)\\
			&\geq \Theta(1) (f(x) - f^*)^{\frac{2p-2}{p}},
		\end{align*}
		where the second inequality follows from Young's inequality, and the third follows from Item~\ref{item: localgrowthcondition} of Corollary~\ref{cor: localconditionsforgdpolyak} and Young's inequality. Taking square roots of both sides completes the proof.

		\subsection{Proof of Lemma~\ref{lem: progressinypolyaklocal}}
		{\color{blue}
			In order to prove this lemma, we will first state and prove the following auxiliary lemma, which shows that the Polyak stepsizes induced by $f$ and $f_T$ are almost the same when the point is close to the ravine.
			\begin{lemma}\label{lem: polyakstepsizeboundlocal}
				For any $x \in U$,  define $y := P_{\cM}(x)$.  Suppose that the inequality $\|\nabla f_N(x)\| \le \frac{1}{100}\|\nabla f_T(y)\|$ holds.  Then, by shrinking $U$ if necessary,  the inequalities hold:
				\begin{align}
					\|x-y\|&=O( \dist^{p-1}(y,S))\label{eqn:place_back_later}\\
					\tfrac{39}{40}(f_T(y) - f^*) &\le f(x) - f^* \le \tfrac{41}{40}(f_T(y) - f^*),\label{it:1_bit_suffice}\\
					\tfrac{19}{20}\|\nabla f_T(y)\|^2 &\le \|\nabla f(x)\|^2 \le \tfrac{21}{20}\|\nabla f_T(y)\|^2. \label{it:2_bit_suffice}
				\end{align}
				Consequently, the Polyak stepsize satisfies 
				\begin{equation}\label{eqn:polyak_comp}
					\frac{9}{10}\frac{f_T(y) - f^*}{\|\nabla f_T(y)\|^2} \le \frac{f(x) - f^*}{\|\nabla f(x)\|^2} \le \frac{11}{10}\frac{f_T(y) - f^*}{\|\nabla f_T(y)\|^2}.
				\end{equation}
			\end{lemma}
			\begin{proof}
				The estimate \eqref{eqn:polyak_comp} follows immediately from \eqref{it:1_bit_suffice} and \eqref{it:2_bit_suffice} through elementary algebraic manipulations. Therefore, we focus on proving the estimates \eqref{eqn:place_back_later}-\eqref{it:2_bit_suffice}.
				
				We begin by proving \eqref{eqn:place_back_later} and \eqref{it:1_bit_suffice}. To this end, we note that
				\begin{equation}\label{eqn:ref_later}
					\begin{aligned}
						\gammalb \| x-y\| +o(1)\dist^{p-1}(y,S) &\le \|\nabla f_N(x)\|\\
						&\le \frac{1}{100}\|\nabla f_T(y)\|\\
						&\le \frac{\betaub}{100} \dist^{p-1}(y,S),
					\end{aligned}
				\end{equation}
				where the first inequality follows from Item~\ref{item: localaimingtowardmanifold} of Corollary~\ref{cor: localconditionsforgdpolyak} and Cauchy-Schwarz, and the last inequality follows from \eqref{eqn:control_tan_grad}. Therefore, the claim \eqref{eqn:place_back_later} holds, that is $\|x-y\| = O(\dist^{p-1}(y,S))$.
				As a result, by shrinking $U$ if necessary, we have
				\begin{align}
					|f_N(x)| &\le C_\ub \|x -y\|^2 + o(1)\|x-y\|\dist^{p-1}(y,S)\notag\\
					&=O(\dist^{2p-2}(y,S))\notag\\
					&\le \frac{\dlb}{40} \dist^p(y,S)\notag\\
					&\le \frac{1}{40}(f_T(y) - f^*),\label{eqn:broop}
				\end{align}
				where the first and the last inequalities follow from Item~\ref{item: localgrowthcondition} of Corollary~\ref{cor: localconditionsforgdpolyak}. Writing $f=f_T+f_N$ and using the estimate \eqref{eqn:broop} directly yields
				$$\frac{39}{40}(f_T(y) - f^*)\leq f(x) - f^* \le \frac{41}{40}(f_T(y) - f^*),$$
				thereby completing the proof of Claim~\ref{it:1_bit_suffice}.
				
				Next, we prove Claim~\ref{it:2_bit_suffice}. By shrinking $U$ if necessary, we assume that Item~\ref{item: propertyofdecompsizeofgradientlocal} of Lemma~\ref{lem: propertyofdecomplocal} holds. Combining this with the assumption $\|\nabla f_N(x)\| \le \frac{1}{100}\|\nabla f_T(y)\|$, we have
				\begin{align*}
					\|\nabla f(x)\| \ge  \| \nabla f_T(x)\| - \|\nabla f_N(x)\|
					\ge \frac{49}{50} \|\nabla f_T(y)\|.
				\end{align*}
				Squaring both sides, we obtain the lower bound. Similarly, by Item~\ref{item: propertyofdecompsizeofgradientlocal} of Lemma~\ref{lem: propertyofdecomplocal} and our assumption that $\|\nabla f_N(x)\| \le \frac{1}{100}\|\nabla f_T(y)\|$, we have
				\begin{align}\label{eqn: boundongradusingnablaf2}
					\|\nabla f(x)\|\le \|\nabla f_N(x)\| + \|\nabla f_T(x)\|
					\le \frac{51}{50} \|\nabla f_T(y)\|.
				\end{align}
				Squaring both sides, we obtain the upper bound. 
		\end{proof}}

		{\color{blue}We now prove Lemma~\ref{lem: progressinypolyaklocal}.}
		
		\begin{proof}[Proof of Lemma~\ref{lem: progressinypolyaklocal}]
			To simplify notation, we set $\etax := \frac{f(x) - f^*}{\|\nabla f(x)\|^2}$ and $\etay := \frac{f(y) - f^*}{\|\nabla f_T(y)\|^2}$. By shrinking $U$ if necessary, we suppose that the conclusions of  Lemma~\ref{lem: polyakstepsizeboundlocal} hold. Consequently, we have
			\begin{align}\label{eqn: etaxbound}
				\frac{9}{10}\etay \le \etax \le \frac{11}{10}\etay,  \qquad  \|\nabla f(x)\|^2 \le \frac{21}{20}\|\nabla f_T(y)\|^2.
			\end{align}
			By shrinking $U$ if necessary, we assume that the conclusions of Lemma~\ref{lem: propertyofdecomplocal} hold as well. Choosing any $\bar y\in P_S(y)$, we now  estimate:
			\begin{align*}
				\dist^2(y - \etax \nabla f_T(y), S) &\le \|y -  \etax \nabla f_T(y) - \bar y\|^2\\
				&\le \|y - \bar y\|^2 - \frac{9}{5}\etay \dotp{\nabla f_T(y), y - \bar y} + \frac{121}{100}\etay^2\|\nabla f_T(y)\|^2\\
				&\le \|y - \bar y\|^2 -\frac{81}{50}\etay(f(y)- f^*) + \frac{121}{100}\etay^2\|\nabla f_T(y)\|^2\\
				&\le \|y - \bar y\|^2 -\frac{2}{5}  \frac{(f(y) - f^*)^2}{\|\nabla f_T(y)\|^2} \\
				&\le \paren{1- \frac{2}{5} \frac{\dlb^2}{\betaub^2}} \| y -\bar y\|^2,
			\end{align*}
			where the second inequality follows from~\eqref{eqn: etaxbound} and the fact that $\dotp{\nabla f_T(y), y - \bar y}$  is positive, the third inequality follows from Item~\ref{item: propertyofdecompbregularitylocal} of Lemma~\ref{lem: propertyofdecomplocal}, the fourth inequality follows from the definition of $\etay$, and the last inequality follows from Item~\ref{item: localsizeofgd} and Item~\ref{item: localgrowthcondition} of Corollary~\ref{cor: localconditionsforgdpolyak}. Taking the square root of both sides, we have
			\begin{align}\label{eqn: dist(y,s)boundintermediate}
				\dist(y - \etax \nabla f_T(y), S) \le \paren{1-  \frac{1}{5} \frac{\dlb^2}{\betaub^2}} \dist(y, S).
			\end{align}
			By shrinking $U$ if necessary, we assume that $P_\cM$ is well-defined and $C^1$ at $x - \etax \nabla f(x)$ for any $x\in U$. A first-order expansion of $P_\cM$ at $x$ yields 
			\begin{align}
				P_\cM(x - \etax \nabla f(x)) = y -  \etax P_{T_\cM(y)}(\nabla f_T(x)) -  \etax P_{T_\cM(y)}(\nabla f_N(x)) + \mathtt{error} \label{eqn: ypluesexpansionpolyak}.
			\end{align}
			Here, we have
			\begin{align*}
				\|\mathtt{error}\| &\le   \etax \|(\nabla P_\cM(x) -P_{T_\cM(y)})\nabla f(x)\|+  C_\cM \|   \etax \nabla f(x) \|^2\\
				& \le 2\etay C_\cM \| x-y\| \|\nabla f_T(y)\| + 2 \etay^2 C_\cM \|\nabla f_T(y)\|^2\\
				&= 2C_\cM \|x-y\| \frac{f(y) - f^*}{\|\nabla f_T(y)\|} + 2C_\cM \frac{(f(y) - f^*)^2}{\|\nabla f_T(y)\|^2}\\
				&\le \frac{20 C_\cM }{9} \| x-y\| \dist(y, S) + \frac{200 C_\cM}{81} \dist^2(y,S)\\
				&\le o(1)\dist(y, S),
			\end{align*}
			where the second inequality follows from~\eqref{eqn: etaxbound}, the third inequality follows from~\eqref{eqn: etaxbound} and Item~\ref{item: propertyofdecompbregularitylocal} of  Lemma~\ref{lem: propertyofdecomplocal}. In addition, we have
			\begin{align}
				\etax \|P_{T_\cM(y)}(\nabla f_T(x) - \nabla f_T(y))\| &\le \frac{11}{10}\etay C_\cM \| x-y\| \|\nabla f_T(y)\|\notag\\
				&= \frac{11C_\cM}{10} \| x-y\| \frac{f_T(y) - f^*}{\|\nabla f_T(y)\|}\notag\\
				&\le \frac{11C_\cM}{9} \| x-y\| \dist(y, S)\notag\\
				&= o(1)\dist(y, S),\label{eqn:getirdone}
			\end{align}
			where the first inequality follows from Item~\ref{item: propertyofdecomplipschitzgradientlocal} of Lemma~\ref{lem: propertyofdecomplocal} and~\eqref{eqn: etaxbound}, the second inequality follows from Item~\ref{item: propertyofdecompbregularitylocal} of Lemma~\ref{lem: propertyofdecomplocal}. Moreover, by \eqref{eqn:place_back_later} we have $\|x-y\| = O(\dist^{p-1}(y,S))$. Consequently, we deduce
			\begin{align}
				\etax \|P_{T_{\cM}(y)} \nabla f_N(x)\| & =o(1)\etay \|x-y\|\notag\\
				&= o(1)\dist^{p-1}(y,S)\frac{f_T(y) - f^*}{\|\nabla f_T(y)\|^2} \notag \\
				&=  o(1)\dist(y, S),\label{eqn:wegonnaneed}
			\end{align}
			where the first inequality follows from Item~\ref{item: localstronga} of Corollary~\ref{cor: localconditionsforgdpolyak}, and the second and third follow from \eqref{eqn:control_tan_grad} and \eqref{eqn:func_control}.
			Combining all the estimates we have
			\begin{align*}
				\dist(y_+,S) &= \dist(P_\cM(x-  \etax \nabla f(x)), S)\\
				&\le \dist(y - \etax \nabla f_T(y), S) + \etax\|P_{T_\cM(y)}(\nabla f_N(x)) \| +  \etax\|P_{T_\cM(y)}(\nabla f_T(x)) - \nabla f_T(y)\|  + \|\mathtt{error}\|\\
				&\le \paren{1-  \frac{1}{5} \frac{\dlb^2}{\betaub^2}}\dist(y, S) + o(1)\dist(y,S),
			\end{align*}
			where the first inequality follows from the triangle inequality and \eqref{eqn: ypluesexpansionpolyak}, and the second inequality follows from \eqref{eqn: dist(y,s)boundintermediate}, \eqref{eqn:getirdone}, and \eqref{eqn:wegonnaneed}. Therefore, Item~\ref{item: progressinypolyaklocal} holds by shrinking $U$ if necessary. Moreover,  
			\begin{align*}
				\|\etax \nabla f(x)\| &\le \frac{3}{2} \etay\|\nabla f_T(y)\|\\
				&=\frac{3}{2}\frac{f_T(y) - f^*}{\|\nabla f_T(y)\|}\\
				&\le 2\dist(y,S),
			\end{align*}
			where the first inequality follows from~\eqref{eqn: etaxbound} and the second inequality follows from Item~\ref{item: propertyofdecompbregularitylocal} of Lemma~\ref{lem: propertyofdecomplocal}. Thus, we deduce
			$$
			\dist(x_+,\cM) \le \dist(x,\cM) + \|\etax \nabla f(x)\| \leq 3\dist(y,S),
			$$
			thereby completing the proof of Item~\ref{item: xMafterpolyaklocal}.
		\end{proof}

		\subsection{Proof of Lemma~\ref{lem: innerlooplocal}}
		
		{\color{blue} Before proving Lemma~\ref{lem: innerlooplocal}, we state and prove three auxiliary lemmas. 
			
			\subsubsection{Auxiliary lemmas}
			The following lemma shows that constant size gradient descent steps can shrink the distance to the ravine at a linear rate up to the tolerance $\dist(x,\cM) = o(1)\cdot \dist^{p-1} (P_{\cM}(x), S)$.
			\begin{lemma}\label{lem: GDshrinksdisttoMlocal}
				Suppose that $\eta \le \frac{\gammalb}{2\gammaub^2}$. By shrinking $U$ if necessary, there exists $\delta >0$ such that for any $x \in U$, we have 
				$$
				\dist\left(x - \eta \nabla f(x), \cM\right) \le \left(1- \frac{\eta \gammalb}{2}\right) \dist(x, \cM) +o(1)\cdot\eta\cdot \dist^{p-1}(P_\cM(x),\cM).
				$$
			\end{lemma}
			\begin{proof}
				Setting $y:= P_\cM(x)$, we first estimate:
				\begin{align*}
					\| x - \eta \nabla f_N(x) - y\|^2 &= \|x - y\|^2 - 2\eta \dotp{\nabla f_N(x), x-y} + \eta^2 \|\nabla f_N(x)\|^2 \\
					&\le (1- 2\eta\gammalb+2\eta^2 \gammaub^2) \|x - y\|^2  + o(1)\eta^2\dist^{2p-2}(y,S)\\
					&\le (1- \eta \gammalb)\|x-y\|^2 + o(1)\eta^2\dist^{2p-2}(y,S),
				\end{align*}
				where the first inequality follows from Item~\ref{item: localaimingtowardmanifold} and Item~\ref{item: localsizeofgd} of Corollary~\ref{cor: localconditionsforgdpolyak}, and the last inequality follows from our assumption that $\eta \le \frac{\gammalb}{2\gammaub^2}$.  Taking square root, we have
				\begin{equation}\label{eqn:main_contr}
					\| x - \eta \nabla f_N(x) - y\| \le \paren{1- \frac{\eta \gammalb}{2}} \dist(x,\cM) + o(1)\cdot \eta\cdot\dist^{p-1}(y,S),
				\end{equation}
				where we use the inequality $\sqrt{1-x} \leq \sqrt{1-x/2}$.
				On the other hand, since the projection $\nabla P_\cM$ is $C_{\cM}$-Lipschitz,  $P_\cM(y) = y$, and $\nabla P_\cM(y)\nabla f_T(y) = \nabla f_T(y)$, we have
				\begin{align}\label{eqn: projectiontaylor}
					\|P_\cM(y - \eta \nabla f_T(y)) - (y - \eta \nabla f_T(y))\| \le \frac{C_\cM \eta^2}{2} \|\nabla f_T(y)\|^2.
				\end{align}
				Combining, we have 
				\begin{align*}
					\dist(x - \eta \nabla f(x), \cM) &\le \| x- \eta \nabla f_N(x) - \eta \nabla f_T(x) - P_\cM(y- \eta \nabla f_T(y)) \|\\
					&\le \|x- \eta \nabla f_N(x) - \eta \nabla f_T(x)  -(y - \eta \nabla f_T(y))\| + \frac{C_\cM \eta^2}{2} \|\nabla f_T(y)\|^2\\
					& \le \|x- \eta \nabla f_N(x) - y\| + \eta \|\nabla f_T(x) - \nabla f_T(y)\| + \frac{C_\cM \eta^2}{2} \|\nabla f_T(y)\|^2\\
					&\le \left(1- \frac{\eta \gammalb}{2}\right)\|x-y\|+o(1)\cdot\eta\cdot\dist^{p-1}(y,S))\\
					&\quad + C_\cM\cdot\eta\cdot \|x-y\|\cdot \|\nabla f_T(y)\|+ \frac{C_{\cM}\eta^2}{2}\|\nabla f_T(y)\|^2\\
					&= \left(1- \frac{\eta \gammalb}{2}\right)\|x-y\|+ o(1)\cdot \eta\cdot\dist^{p-1}(y,S),
				\end{align*}
				where the second inequality follows from~\eqref{eqn: projectiontaylor}, the fourth inequality follows from \eqref{eqn:main_contr} and Item \ref{item: propertyofdecomplipschitzgradientlocal} of Lemma~\ref{lem: propertyofdecomplocal}, and the last equality follows from Item~\ref{item: localsizeofgd} of Corollary~\ref{cor: localconditionsforgdpolyak}.
			\end{proof}
			
			Lemma~\ref{lem: GDshrinksdisttoMlocal} shows that gradient descent with constant stepsize rapidly approach the ravine. In the process, however, the iterates may move away from the set of minimizers $S$. The following lemma provides a bound on this negative effect.
			\begin{lemma}\label{lem: ychangeingdlocal}
				Suppose that we set $\eta \le \frac{1}{\betaub}$. Then for any $x$ sufficiently close to $\bar x$, the points $y = P_\cM(x)$ and $y_+ = P_\cM(x- \eta \nabla f(x))$  satisfy
				\begin{equation}\label{eqn:base_ineq_srror}
					\dist(y_+, S) \le \dist(y,S)\paren{1 - \tfrac{\eta \dlb}{2} \dist^{p-2}(y,S)} + o(1)\cdot\eta \dist(x, \cM) ,
				\end{equation}
				and 
				\begin{equation}\label{eqn:lower_bound_lem}
					\dist(y_+, S) \ge \paren{1- \eta o(1)}\cdot\dist(y,S) + o(1) \cdot \gdstep \dist(x,\cM).
				\end{equation}
			\end{lemma}
			\begin{proof}
				By shrinking $U$ if necessary, we may assume that all items in Lemma~\ref{lem: propertyofdecomplocal} hold. 
				We will first show the estimate:
				\begin{equation}\label{eqn:intermediate}
					\dist(y - \eta \nabla f_T(y), S) \le \dist(y,S)\paren{1- 0.9\eta \dlb\dist(y,S)^{p-2}} + o(1)\eta \dist^{p-1}(y,S).
				\end{equation}
				To this end, for any $\bar y\in P_S(y)$ we successively estimate
				\begin{align}
					\dist^2(y - \eta \nabla f_T(y), S) &\le \|y - \eta \nabla f_T(y) - \bar y\|^2\notag\\
					&= \|y - \bar y\|^2 - 2\eta \dotp{\nabla f_T(y), y -\bar y}  + \eta^2 \|\nabla f_T(y)\|^2 \notag\\
					&\le \|y - \bar y\|^2 - \frac{9}{5}\eta (f_T(y) - f^*)  + \eta^2 \|\nabla f_T(y)\|^2 \label{eqn: ychangeeq2}\\
					&\le \|y - \bar y\|^2 - \frac{9}{5}\eta \dlb \| y - \bar y\|^p +\eta^2 \betaub^2\|y- \bar y\|^{2p-2}\label{eqn: ychangeeq3}\\
					&= \|y - \bar y\|^2\left(1- \frac{9}{5}\eta \dlb \| y-\bar y\|^{p-2} + o(1)\eta\|y- \bar y\|^{p-2}\right),\notag 
				\end{align}
				where the estimate~\eqref{eqn: ychangeeq2} follows from Item~\ref{item: propertyofdecompbregularitylocal} of Lemma~\ref{lem: propertyofdecomplocal}, and the estimate~\eqref{eqn: ychangeeq3} follows from \eqref{eqn:control_tan_grad} and \eqref{eqn:func_control} and the inequality $p> 1$.
				By shrinking $U$ if necessary and taking the square root, we have
				$$
				\dist(y - \eta \nabla f_T(y), S) \le \|y - \bar y\| \paren{1 - 0.9 \eta \dlb\|y - \bar y\|^{p-2}} + o(1)\eta \dist^{p-1}(y,S),
				$$
				thereby verifying the claimed estimate \eqref{eqn:intermediate}. Additionally, Item \ref{item: propertyofdecompsizeofgradientlocal} of Lemma~\ref{lem: propertyofdecomplocal} yields:
				\begin{align}\label{eqn: upperboundnabla2f(x)}
					\|\nabla f_T(x)\| =O(\|\nabla f_T(y)\|).
				\end{align}
				Next, a Taylor expansion of $P_\cM$ at $x$ yields
				\begin{align}
					P_\cM(x -\eta \nabla f(x)) = y -  \eta P_{T_\cM(y)}(\nabla f_T(x))-\eta P_{T_\cM(y)}(\nabla f_N(x)) + \mathtt{error} \label{eqn: ypluesexpansion}.
				\end{align}
				Here,
				\begin{align*}
					\|\mathtt{error}\| &\le \eta \|(\nabla P_\cM(x) -P_{T_\cM(y)})\nabla f(x)\|+  \frac{\eta^2C_\cM}{2} \| \nabla f(x) \|^2\\
					& \le \eta C_\cM \| x-y\|\|\nabla f(x)\| + \eta^2C_\cM  (\|\nabla f_N(x)\|^2 + \|\nabla f_T(x)\|^2)\\
					&= o(1)\eta \|x-y\| + o(1)\eta\dist^{p-1}(y,S),
				\end{align*}
				where the second inequality follows from Lipschitz continuity of $\nabla P_\cM$ and the equality follows from~\eqref{eqn: upperboundnabla2f(x)} and Item~\ref{item: localsizeofgd} of Corollary~\ref{cor: localconditionsforgdpolyak} Additionally, using Item \ref{item: propertyofdecomplipschitzgradientlocal} of Lemma~\ref{lem: propertyofdecomplocal} we estimate
				\begin{align*}
					\|P_{T_\cM(y)}(\nabla f_T(x)) - \nabla f_T(y)\|\leq  \|\nabla f_T(x) - \nabla f_T(y)\|&\le C_\cM \| x-y\| \|\nabla f_T(y)\|\\
					&= o(1)\dist^{p-1}(y, S).
				\end{align*}
				Moreover, Item~\ref{item: localstronga} of Corollary~\ref{cor: localconditionsforgdpolyak} shows $
				P_{T_\cM(y)}(\nabla f_N(x)) =o(1)\|x-y\|.
				$
				Combining with \eqref{eqn: ypluesexpansion} and using the triangle inequality, we successively compute:
				\begin{align*}
					\dist(y_+,S) &= \dist(P_\cM(x- \eta \nabla f(x)), S)\\
					&\leq \dist(y-\eta P_{T_\cM(y)}(\nabla f_T(x)), S)+\eta\|P_{T_\cM(y)}(\nabla f_N(x))\|+\|\mathtt{error}\|\\
					&\le \dist(y - \eta \nabla f_T(y), S) + \eta\|P_{T_\cM(y)}(\nabla f_N(x)) \| + \eta \|P_{T_\cM(y)}(\nabla f_T(x)) - \nabla f_T(y)\|  + \|\mathtt{error}\|\\
					&\le \|y - \bar y\| \paren{1 - 0.9 \eta \dlb \|y - \bar y\|^{p-2}} + o(1)\eta \| x-y\| + o(1)\eta \dist^{p-1}(y, S)\\
					&\le \|y - \bar y\| \paren{1 - \frac{\eta \dlb}{2} \|y - \bar y\|^{p-2}} + o(1)\eta \| x-y\|
				\end{align*}
				thus completing the proof of \eqref{eqn:base_ineq_srror}.	
				
				We next prove \eqref{eqn:lower_bound_lem}. To this end, using \eqref{eqn: ypluesexpansion} again we compute 
				\begin{align*}
					\dist(y_+,S) &= \dist(P_\cM(x- \eta \nabla f(x)), S)\\
					&\ge \dist(y - \eta \nabla f_T(y), S) - \eta\|P_{T_\cM(y)}(\nabla f_N(x)) \| -\eta \|P_{T_\cM(y)}(\nabla f_T(x)) - \nabla f_T(y)\|  - \|\mathtt{error}\|\\
					&\ge \dist(y,S) - \eta \|\nabla f_T(y)\| +  o(1)\eta \| x-y\| + o(1)\eta \dist^{p-1}(y, S)\\
					&\ge \paren{1 - \eta o(1) }\dist(y,S) + o(1)\eta \| x-y\|,
				\end{align*}
				which completes the proof of \eqref{eqn:lower_bound_lem}.
			\end{proof}

			An important step of our proof is to show that when initialized sufficiently close to a minimizer $\bar x$ of $f$, the iterates of Algorithm~\ref{alg:GD-Polyak} stay with a neighborhood of $\bar x$. This can be guaranteed by the {\L}ojasiewicz  property (Lemma~\ref{lem: KLlocal}) and a standard finite length gurarantee~\cite[Lemma 2.6]{attouch2013convergence}, which we now record. We provide a proof sketch for completeness.

			\begin{lemma}[Finite length]\label{lem: finitelength}
				Suppose that the $C^1$-smooth function $f$ has {\L}ojasiewicz property with exponent $\alpha$ at a minimizer $\bar x \in \RR^d$, that is there exists a constant $\ckl >0$ and a neighborhood $U$ of $\bar x$ such that for all $x \in U$, we have 
				\begin{align}\label{eqn: KLdef}
					\ckl\|\nabla f(x)\| \ge  (f(x) - f^*)^{\alpha}\qquad \forall x\in U.
				\end{align} 
				Suppose, moreover, that $\nabla f$ is $L$-Lipschitz continuous on some neighborhood $U$ where~\eqref{eqn: KLdef} is satisfied. Let $\{x_k\}_{k\geq 0}$ be a sequence generated by gradient descent with constant stepsize $\eta \le \frac{1}{L}$. Then there exists a constant $\rho_0$ such that for all initialization $x_0 \in B(\bar x, \rho_0)$, the following holds for all $k \ge 1$:
				\begin{align}\label{eqn: finitelength}
					\|x_k - x_0\| \le \frac{2}{L} \|\nabla f(x_0)\| + \frac{4\ckl}{1-\alpha} (f(x_0) - f^*)^{1-\alpha}.
				\end{align}
				Moreover, $x_i \in U$ for all $k\ge 1$.
			\end{lemma}
			\begin{proof}
				Without loss of generality, we assume that $U = B(\bar x, \delta)$ for some $\delta >0$. Let $\rho_1$ be small enough so that $\|\nabla f(x)\| \le \frac{\delta L}{2}$ for all $x \in B(\bar x, \rho_1)$. Let $\rho_0$ be small enough such that for any $x \in B(\bar x, \rho_0)$, we have
				\begin{align}\label{eqn: initialization}
					\rho_0 + \frac{2}{L} \|\nabla f(x)\| + \frac{4\ckl}{1-\alpha}(f(x) - f^*)^{1-\alpha} < \min\left\{\rho_1, \frac{\delta}{2}\right\}.
				\end{align}
				We prove the result by induction on $k$. First, we have 
				\begin{align*}
					\|x_1 - \bar x\| &\le \|x_0 - \bar x\| + \|x_1 - x_0\|\\
					&\le \rho_0  + \eta \|\nabla f(x_0)\|\\
					&\le  \delta,
				\end{align*}
				where the last inequality follows from $\eta \le \frac{1}{L}$ and~\eqref{eqn: initialization}. So~\eqref{eqn: finitelength} holds for $k = 1$. 
				
				Now suppose that~\eqref{eqn: finitelength} holds for all $k \le k_0$. Note that for any $k \le k_0$, we automatically have $x_k \in B(\bar x, \rho_1)$ and	
				\begin{align*}
					\|x_{k_0+1} - \bar x\| &\le \|x_{k_0} - \bar x\| + \|x_{k_0+1}- x_{k_0}\|\\
					&\le  \rho_0 + \frac{2}{L} \|\nabla f(x_0)\| + \frac{4\ckl}{1-\alpha}(f(x_0) - f^*)^{1-\alpha} + \frac{1}{L}\|\nabla f(x_{k_0})\|\\
					&\le \delta,
				\end{align*}
				where the last inequality follows from~\eqref{eqn: initialization} and our choice of $\rho_1$. Therefore, $x_{k_0+1} \in B(\bar x, \delta)$. By the $L$-smoothness, for any $0\le k \le k_0$, we have
				\begin{align}\label{eqn: sufficientdescent}
					f(x_{k+1}) &\le f(x_k) - \eta \|\nabla f(x_k)\|^2 + \frac{L\eta^2}{2}\|\nabla f(x_k)\|^2 \notag \\
					&\le f(x_k) - \frac{\eta}{2} \|\nabla f(x_k)\|^2 \notag \\
					&= f(x_k) - \frac{1}{2\eta} \|x_{k+1} - x_k\|^2,
				\end{align} 
				where the  second inequality follows from $\eta \le \frac{1}{L}$. 
				Additionally, for any $0\le k\le k_0$, we have
				\begin{align}\label{eqn: relativeerror}
					\|\nabla f(x_{k+1})\| &\le L\|x_{k+1} - x_k\| + \|\nabla f(x_k)\| = \left(L + \frac{1}{\eta}\right) \|x_{k+1} - x_k\|.
				\end{align}
				Repeating the same argument as~\cite[Lemma 2.6]{attouch2013convergence}, we obtain 
				\begin{align*}
					\sum_{k=1}^{k_0} \|x_{k+1} - x_k\| &\le \|x_1 - x_0\| + \frac{2(L\eta + 1)\ckl}{1-\alpha} (f(x_1) - f^*)^{1-\alpha}\\
					&\le \|x_1 - x_0\| + \frac{4\ckl}{1-\alpha} (f(x_0) - f^*)^{1-\alpha},
				\end{align*}
				where the last inequality follows from $\eta \le \frac{1}{L}$ and $f(x_1) \le f(x_0)$. Using the triangle inequality, we obtain
				\begin{align*}
					\|x_{k_0+1} - x_0\| &\le \|x_1 -x_0\| + \sum_{k=1}^{k_0}\|x_{k+1} - x_k\|\\
					&\le   2\|x_1 - x_0\| + \frac{4\ckl}{1-\alpha} (f(x_0) - f^*)^{1-\alpha}\\
					&\le \frac{2}{L} \|\nabla f(x_0)\| + \frac{4\ckl}{1-\alpha} (f(x_0) - f^*)^{1-\alpha}.
				\end{align*}
				The result now follows by induction.
			\end{proof}
		}
		
		\subsubsection{Proof of Lemma~\ref{lem: innerlooplocal} using auxiliary lemmas}
		\begin{proof}
			First note that by applying Item~\ref{item: localaimingtowardsolution}, Item~\ref{item: localsizeofgd}, and Item~\ref{item: localgrowthcondition} of Corollary~\ref{cor: localconditionsforgdpolyak} and decreasing $\deltaU$ if necessary,  Item~\ref{item: induction1partalocal}  implies $\|\nabla f_N(x_k)\| \le \frac{1}{100} \|\nabla f_T(y_k)\|$ as claimed.
			Next, observe that  Item~\ref{item: stayinneighborhood} holds by Lemma~\ref{lem: KLlocal} and Lemma~\ref{lem: finitelength}. If $\dist(x_k, \cM) \le \frac{\dlb}{400\gammaub} \dist^{p-1}(y_k,S)$ holds for some $0\le k \le K$, we let $k_0$ be the smallest such index. Otherwise, we define $k_0 = K$. The rest of the proof consists of two steps.
			\begin{enumerate}
				\item[(i).]\label{item:astepinproof} We show that Item~\ref{item: induction1partblocal} and Item~\ref{item: induction2local} hold for any $0\le k \le k_0$. The conclusion holds trivially for $k = 0$. Suppose that the conclusion holds for all indices less or equal to $k$ for some $0 \le k \le k_0-1$. By Item~\ref{item: propertyofdecompbregularitylocal} in Lemma~\ref{lem: propertyofdecomplocal}, we have $\frac{\dlb}{\betaub} \le \frac{10}{9}$.  By Lemma~\ref{lem: GDshrinksdisttoMlocal} and decreasing $\deltaU$ if necessary, we have 
				\begin{align}\label{eqn: inductiondistshrinkifbig}
					\dist\left(x_{k+1}, \cM\right) &\le \left(1- \frac{\eta \gammalb}{2}\right) \dist(x_k, \cM) +o(1)\cdot \eta \dist^{p-1}(y_k,S) \notag\\
					&\le  \left(1- \frac{\eta \gammalb}{4}\right) \dist(x_k, \cM),
				\end{align}
				where the last inequality follows from our assumption that $\dist(x_k, \cM) \ge \frac{\dlb}{400\gammaub} \dist^{p-1}(y_k,S)$. On the other hand, by decreasing $\deltaU$ if necessary, we have 
				\begin{align}
					\dist(y_{k+1}, S) &\le \dist(y_k, S) + \gdstep\frac{\dlb^2 \gammalb}{240\betaub^2}\dist(x_k,\cM)\notag\\
					&\le \dist(y_0, S) + \sum_{l=0}^{k} \gdstep\frac{\dlb^2 \gammalb}{240\betaub^2}\dist(x_l,\cM) \notag\\
					&\le \dist(y_0, S) + \sum_{l=0}^{k} \gdstep\frac{\dlb^2 \gammalb}{240\betaub^2}\paren{1- \frac{\eta \gammalb}{4}}^l\dist(x_0,\cM)\notag \\
					&\le \dist(y_0,S) + \frac{1}{60}\frac{\dlb^2}{\betaub^2} \dist(x_0,\cM), \label{eqn:firsthalfdistance}
				\end{align}
				where the first inequality follows from Lemma~\ref{lem: ychangeingdlocal}, the second inequality follows by applying the first inequality recursively, the third inequality follows from the induction hypothesis, and the last inequality follows by bounding the geometric series.
				\item[(ii).]  We show that Item~\ref{item: induction1partalocal}  and Item~\ref{item: induction2local} hold for any $k_0\le k \le K$.  By part (i) of this proof and the definition of $k_0$, the conclusion holds trivially for $k = k_0$. Suppose that the conclusion holds for all indices less or equal to $k$ for some $k_0 \le k \le K-1$. By applying equation~\eqref{eqn:lower_bound_lem} in Lemma~\ref{lem: ychangeingdlocal} to $y= y_k$, our induction hypothesis that $\dist(x_k, \cM) \le  \frac{\dlb}{200\gammaub} \dist^{p-1}(y_k,S)$, and decreasing $\deltaU$ if necessary, we have
				\begin{align}\label{eqn: lbdist(y,S)}
					\dist(y_{k+1}, S) \ge \max\left \{\paren{1 - \frac{\eta \gammalb}{4} }^{1/(p-1)} , \left(\frac{2}{3}\right)^{1/(p-1)}\right\}\dist(y_k,S).
				\end{align}
				(The motivation for the precise terms in this maximum will be clear momentarily.) We now consider two cases:
				\begin{enumerate}
					\item Suppose $\dist(x_k, \cM) \ge \frac{\dlb}{400\gammaub} \dist^{p-1}(y_k,S)$. Then we have
					\begin{align*}
						\dist(x_{k+1},\cM) &\le \paren{1 - \frac{\eta \gammalb}{4} }\dist(x_k, \cM)\\
						&\le  \paren{1 - \frac{\eta \gammalb}{4} }\frac{\dlb}{200\gammaub} \dist^{p-1}(y_k,S)\\
						&\le \frac{\dlb}{200\gammaub} \dist^{p-1}(y_{k+1},S),
					\end{align*}
					where the first inequality follows by Lemma~\ref{lem: GDshrinksdisttoMlocal} and decreasing $\deltaU$ if necessary, the second inequality follows from the induction hypothesis that $\dist(x_i, \cM) \le  \frac{\dlb}{200\gammaub} \dist^{p-1}(y_i,S)$, and the last inequality follows from~\eqref{eqn: lbdist(y,S)}. 
					\item  Suppose that $\dist(x_k, \cM) \le  \frac{\dlb}{400\gammaub} \dist^{p-1}(y_k,S)$. Then we have
					\begin{align*}
						\dist(x_{k+1},\cM) &\le \dist(x_k, \cM) + 	o(1)\dist^{p-1}(y_k,S)\\
						&\le \frac{\dlb}{300\gammaub} \dist^{p-1}(y_k,S)\\
						&\le \frac{\dlb}{200\gammaub} \dist^{p-1}(y_{k+1},S),
					\end{align*}
					where the first inequality follows from  Lemma~\ref{lem: GDshrinksdisttoMlocal}, the second inequality follows by decreasing $\deltaU$ if necessary, and the last inequality follows from equation~\ref{eqn: lbdist(y,S)} and the estimate
					$$\dist^{p-1}(y_{k+1},S)  \ge \frac{2}{3}\dist^{p-1}(y_k,S).$$ 
				\end{enumerate}
				Consequently, Item~\ref{item: induction1partalocal} holds for index $k+1$. To see Item~\ref{item: induction2local}, we note that when $\deltaU$ is sufficiently small, by Lemma~\ref{lem: ychangeingdlocal}, for any $k_0\le l\le k$, we have
				$$
				\dist(y_{l+1}, S) \le \dist(y_l, S)\paren{1- \frac{\eta \dlb}{4} \dist(y_l,S)^{p-2}} + \gdstep \gammaub \dist(x_l, \cM).
				$$
				Now recall that we have $\dist(x_l, \cM) \le \frac{\dlb}{200\gammaub} \dist^{p-1}(y_l,S)$ for any $k_0\le l \le k$ by our induction hypothesis. As a result, 
				$$
				\dist(y_{l+1}, S) \le \dist(y_{l}, S)\paren{1- \frac{\eta \dlb}{4} \dist(y_l,S)^{p-2}} + \frac{\eta \dlb}{200} \dist^{p-1}(y_l,S) \le \dist(y_l,S).
				$$
				Therefore, we have $\dist(y_{k+1},S) \le \dist(y_{k_0}, S) \le \dist(y_0,S) + \frac{1}{60}\frac{\dlb^2}{\betaub^2} \dist(x_0,\cM),$ where the final inequality follows from~\eqref{eqn:firsthalfdistance}. Thus, Item~\ref{item: induction2local} holds.
			\end{enumerate}	
			The proof is complete.
			
		\end{proof}
		
		\section{Examples}
		In this section, we show that $\gdpk$ (Algorithm~\ref{alg:GD-Polyak}) has a local nearly linear rate for the two main examples discussed in the introduction: overparameterized matrix sensing and for learning a single neuron. More precisely, we will show that Assumption~\ref{assum:tangent} holds, and therefore, our main result (Theorem~\ref{thm: main_thm_intro}) is applicable.

		\subsection{Overparametrized matrix factorization}\label{section: matrix factorization}
		We begin with a simplified problem of overparameterized matrix factorization to build intuition for the more complicated matrix sensing problem. Namely, overparameterized matrix factorization is the optimization problem:
		\begin{align}\label{eqn: matrixfactorization}
			\min_{B\in \RR^{d\times k}}~f(B) = \|BB^\top -X \|_F^2,
		\end{align}
		where $X\in \R^{d\times d}$ is a symmetrix positive definite rank $r$ matrix, for some $r<k$. In particular, the optimal value of the problem \eqref{eqn: matrixfactorization} is zero. The main difficulty of the optimization problem \eqref{eqn: matrixfactorization} is that it exhibits a mixture of quadratic and quartic growth. To see this, without loss of generality, we may assume that $X$ takes the form  
		$X=
		\begin{pmatrix}
			D & 0 \\
			0 & 0
		\end{pmatrix},
		$
		where $D \in \RR^{r\times r}$ is a diagonal matrix with positive diagonal elements. We let $\sigma_1$ and $\sigma_r$ be the largest and smallest eigenvalues of $D$. We write the variable $B$ in block form $B = \begin{pmatrix}
			P \\ Q
		\end{pmatrix}$, for  $P \in \RR^{r\times k}$ and $Q \in \RR^{(d-r)\times k}$. With this notation, the objective function takes the form
		\begin{align*}
			f(B) = \norm{\begin{pmatrix}
					PP^\top - D & PQ^\top \\
					QP^\top & QQ^\top
			\end{pmatrix}}_F^2 = \|	PP^\top - D\|_F^2 + 2 \|PQ^\top\|_F^2 + \|QQ^\top\|_F^2.
		\end{align*}
		Clearly, the set of minimizers has the form
		$$S := \left\{\begin{pmatrix}
			P\\
			0
		\end{pmatrix} \colon PP^\top = D \right\}.$$ 
		Define the following set 
		$$\cM := \left\{B = \begin{pmatrix}
			P\\
			Q
		\end{pmatrix} \colon PP^\top = D, PQ^\top=0 \right\}.$$
		Clearly, $\cM$ contains $S$ and is, in fact, a smooth manifold. The proofs of all results in this section appear in Section~\ref{sec:proof_overparam_sensing}.
		
		\begin{theorem}[Smoothness]\label{thm:ravineissmooth}
			The set $\cM$ is a $C^{\infty}$ smooth manifold.
		\end{theorem}

		Next, the following theorem shows that $\cM$ is a ravine with respect to the nearest point projection $P_{\cM}$. 
		
		\begin{theorem}[Ravine]\label{thm: factorization_uniform_ravine}
			There exists a constant $\delta >0$ such that the estimate
			\begin{align*}
				\frac{\sigma_r}{8}\|B -P_\cM(B)\|_F^2 \le f(B) - f(P_\cM(B)) \le  18 \sigma_1\|B -P_\cM(B)\|_F^2,
			\end{align*}
			holds for any $B$ with $\dist_F(B,S) < \delta$.
			In particular, $\cM$ is a $C^{\infty}$ ravine for $f$ at any $\bar B\in S$.
		\end{theorem}

		Next, we verify the constant-order quartic growth of $f$ on $\cM$.
		
		\begin{lemma}[Constant order growth on $\cM$]\label{lem: matrixfac_fourthorder}
			For any $B \in \cM$, we have 
			\begin{equation}\label{eqn:const_growth_man}
				\frac{1}{k} \dist^4(B, S) \le f(B) \le  \dist^4(B, S),
			\end{equation}
			and consequently Assumption~\ref{assum:tangent} holds with $p=4$. 
		\end{lemma}

		Thus, Corollary~\ref{cor: compactravineconvergence} directly applies and shows that Algorithm~\ref{alg:GD-Polyak} converges almost linearly when initialized sufficiently close to the solution set $S$.

		\subsection{Overparametrized matrix sensing}
		As discussed in the introduction, the symmetric matrix sensing problem is given by 
		\begin{equation}\label{eqn: matrixsensingobj}
			\min_{B\in \R^{d\times k}}f(B) = \frac{1}{m}\sum_{i=1}^{m} (y_i - \dotp{A_i, BB^\top})^2,
		\end{equation}
		where $A_1,\ldots, A_m \in \RR^{d\times d}$ are fixed (measurement) matrices and equalities $y_i = \dotp{A_i, X}$ hold for some symmetric positive semi-definite matrix $X\in \RR^{d\times d}$ with rank $r$. We assume that the problem is rank overparameterized, that is $k \ge r$.
		Our main result will hold under the standard restricted isometry property. Namely, define the sensing linear map by 
		$$
		\cA(X) := [m^{-1/2}\dotp{A_i, X} ]_{1\le i \le m}.
		$$
		
		\begin{definition}[Restricted Isometry Property(\cite{candes2008restricted})]\label{def: RIP}{\rm
				The  map $\cA: \RR^{n\times n} \rightarrow \RR^{m}$ satisfies the {\em Restricted Isometry Property (RIP) of rank $l$ with constant $\delta >0$} if the estimate	$$
				(1-\delta) \|Z\|_F^2 \le \|\cA(Z)\|^2 \le (1+\delta)\|Z\|_F^2.
				$$
				holds for all matrices $Z$ with rank at most $l$.}
		\end{definition}
		
		For various random measurement models (e.g. Gaussian), RIP holds with high probability~\cite{candes2011tight, recht2010guaranteed} and with $\delta$ arbitrarily small.   The following theorem is the main result of the section. The proof appears in Section~\ref{sec:proof_mat-sense}.
		\begin{theorem}[Ravine for matrix sensing]\label{thm: sensinggrowth}
			Suppose that for measurement operator $\cA$ satisfies RIP of rank $l = k+r$ and with some $\delta  \le \frac{1}{2}$. Then the set of minimizer of $f$ in~\eqref{eqn: matrixsensingobj} is given by $S = \{B\colon BB^\top =X\}$. Moreover, $\nabla^2 f$ has constant rank on $S$ and there exist constants $\delta_0 >0$ and $\dlb >0$ such that for any $B$ with $\dist(B,S) \le \delta_0$, we have
			\begin{equation}\label{eqn:quart_lower_mat_sense}
				f(B) \ge \dlb \dist^4 (B,S).
			\end{equation}
			Consequently, Assumption~\ref{assum:tangent} holds for the $C^{\infty}$ Morse ravine at any  $\bar B\in S$ with $p=4$.
		\end{theorem}

		Thus Corollary~\ref{cor: compactravineconvergence} directly applies and shows that Algorithm~\ref{alg:GD-Polyak} converges almost linearly when initialized sufficiently close to the solution set $S$.
		
		\subsection{Overparametrized neural network}
		Our final example is the problem of a single neuron in the overpartametrized regime. That is, following~\cite{xu2023over}, we consider the problem
		$$
		f(w) = \EE_{x\sim N(0,I)}\left[ \frac{1}{2}\left(\sum_{i=1}^{2}[w_i^\top x]_+ - [v^\top x]_+ \right)^2\right],
		$$
		where $w = (w_1^\top, w_2^\top)^\top \in \RR^{2d}$ denotes the parameter vector. The paper~\cite{xu2023over} showed that gradient descent with constant stepsize converges at a sublinear rate. In contrast, we will now show that  one can achieve a local nearly linear rate of convergence by using the adaptive gradient method (Algorithm~\ref{alg:GD-Polyak}). Define the set
		\begin{align}\label{eq:solutionsetdu}
			S = \left\{w \colon w_1+w_2 = v, \dotp{w_i, v} = \|w_i\|\|v\| \text{, and } \frac{\|v\|}{8} \le \|w_i\| \le  2\|v\| \text{ for $i=1,2$} \right\}.
		\end{align}
		It is known that $S$ is a strict subset of minimizers of $f$. We work with this subset rather than the entire set of minimizers because the three-phase analysis in~\cite{xu2023over} shows that gradient descent from small random initialization converges to the vicinity of $S$ and the local geometry at $S$ slows down gradient decent.  	Note that \cite[Lemma 1]{safran2018spurious} shows that the function $f$ is $C^2$-smooth on a neighborhood of $S$. As we will see, the linear subspace $\cM = \{w \colon w_1 + w_2 = v\}$ comprises a ravine around any minimizer $w\in \cM\cap S$. The following is the main result of the section. The proof appears in Section~\ref{sec:proofs_overparamNNs}.

		\begin{theorem}[Ravine for learning a single layer neural network]\label{thm: simonduravine}
			For any $\bar w \in S$, the set $\cM$ is a $C^2$ ravine for $f$ at $\bar w$. Moreover, the projection map $P_\cM$ is $C^\infty$ smooth near $\bar w$, and Assumption~\ref{assum:tangent} holds for $\cM$ at $\bar w$ with $p=3$.
		\end{theorem}
		
		Thus Corollary~\ref{cor: compactravineconvergence} directly applies and shows that Algorithm~\ref{alg:GD-Polyak} converges almost linearly when initialized sufficiently close to any point in $S$.

		\bibliographystyle{unsrt}
		\bibliography{bibliography}
		
		\appendix

		\section{Proofs for overparametrized matrix factorization}\label{sec:proof_overparam_sensing}

		\subsection{Proof of Theorem~\ref{thm:ravineissmooth}}
		The proof proceeds by an application of the implicit function theorem. To this end, we may write $\cM$ as the zero set of the map $F: \RR^{d\times k} \rightarrow \mathbb{S}^{r\times r} \times \RR^{r\times (d-r)}$ defined by setting $F(P,Q)
		= \begin{pmatrix}
			PP^\top - D
			\\ PQ^\top
		\end{pmatrix}$.  We claim that $\nabla F$ is surjective at any point $\begin{pmatrix}
			P\\
			Q
		\end{pmatrix} \in \cM$. To see this, a simple calculation yields the expression	$$\nabla F{(P,Q)}\left[\begin{pmatrix}
			R\\
			S
		\end{pmatrix}\right] = \begin{pmatrix}
			PR^\top + RP^\top \\ PS^\top + RQ^\top
		\end{pmatrix}. $$
		Note that the first block on the right side depends only on $R$ and not on $S$.	
		A quick computation shows that for any symmetric matrix $A \in \mathbb{S}^{r\times r}$, equality
		$A=PR^\top + RP^\top$ holds for the matrix $R=\frac{1}{2} AD^{-1}P$. Thus the map $R \rightarrow PR^\top + RP^\top$ is surjective from $\RR^{r\times k}$ to $\mathbb{S}^{r\times r}$. Looking at the second block, the equality $PP^\top= D$ implies that $P$ has full row-rank and therefore $S \rightarrow PS^\top$ is surjective from $\RR^{(d-r)\times k}$ to $\RR^{(d-r)\times r}$. 
		Thus we conclude that the Jacobian $\nabla F{\begin{pmatrix}
				P\\
				Q
		\end{pmatrix}}$ is surjective. Note that $F$ is $C^\infty$ and $\nabla F$ is surjective at every point in $F^{-1}(0) = \cM$, and therefore $\cM$ is a $C^\infty$-smooth manifold.		
		
		\subsection{Proof of Theorem~\ref{thm: factorization_uniform_ravine}}\label{sec:proof_of tangent}
		Denote the projection of $B$ onto $\cM$ by 
		$P_\cM(B)=\begin{pmatrix}
			\hat P \\ \hat Q
		\end{pmatrix}$. 
		We begin by writing
		\begin{align}
			f(B) - f(P_\cM(B)) &=f(P,Q)-f(\hat P,\hat Q)\notag\\
			&= \|PP^\top -D\|_F^2 + 2 \|PQ^\top\|_F^2 + \|QQ^\top\|_F^2\notag \\
			&~~-(\|\hat P\hat P^\top -D\|_F^2 + 2 \|\hat P \hat Q^\top\|_F^2 + \|\hat Q \hat Q^\top\|_F^2)\notag\\
			&=\|PP^\top -D\|_F^2 + 2 \|PQ^\top\|_F^2 + \|QQ^\top\|_F^2-\|\hat Q \hat Q^\top\|_F^2,\label{eqn:multline}
		\end{align}
		where  \eqref{eqn:multline} follows from the definition of $\cM$. The remainder of the proof estimates each term on the right side of \eqref{eqn:multline}.

		We denote the nearest-point projection of $P$ onto $\{P \colon PP^\top = D\}$ by $\tilde P$ and the projection of $Q$ onto $\{Q \colon \tilde{P}Q^\top = 0\}$ by $\tilde Q$.	 The result~\cite[Lemma 35]{ma2018implicit} shows that the matrix $\tilde P^\top P$ is symmetric and positive semidefinite. By taking $\delta$ sufficiently small, we may assume that $\sigma_r(P) \ge \sqrt{\frac{\sigma_r(D)}{2}} =  \sqrt{\frac{\sigma_r}{2}}$, and $\max\{\sigma_1(P), \sigma_1(\hat P), \sigma_1(Q)\} \le 2\sqrt{\sigma_1}.$ Consequently, we deduce
		\begin{align}\label{eqn: PtotildeP}
			\|PP^\top - D\|_F^2 &= \|P(P - \tilde P)^\top + (P-\tilde P) \tilde P^\top\|_F^2 \notag\\
			&= \fnorm{P(P - \tilde P)^\top}^2 + \fnorm{(P-\tilde P) \tilde P^\top}^2 + 2\cdot \trace{P(P - \tilde P)^\top \tilde P (P-\tilde P)^\top} \notag\\
			&\ge 2\fnorm{P(P - \tilde P)^\top}^2 + 2\cdot \trace{P(P - \tilde P)^\top  P (P-\tilde P)^\top}+ o(1)\|P-\tilde P\|_F^2 \notag\\
			&\ge 2\fnorm{P(P - \tilde P)^\top}^2 + o(1)\|P-\tilde P\|_F^2 \notag\\
			&\ge \frac{\sigma_r}{2} \fnorm{P-\tilde P}^2,
		\end{align}
		where the second inequality follows from the observation that $\tilde P^\top P$ is symmetric.
		On the other hand, we also have:
		\begin{align}\label{eqn: PP-D_upper_bound}
			\fnorm{PP^\top -D} &= \|P(P - \hat P)^\top + (P-\hat P) \hat P^\top\|_F \le  4 \sqrt{\sigma_1} \|P - \hat P\|_F.
		\end{align}
		Next, note that we may write
		\begin{align}
			\|PQ^\top \|_F^2 &= \|(P- \tilde P)Q^\top + \tilde{P}(Q-\tilde Q)^\top\|_F^2  \label{eqn: STlowerboundeq1}\\
			&\ge  \sigma_r^2(\tilde P)\|Q-\tilde Q\|_F^2 - 2\opnorm{\tilde P} \opnorm{ Q} \|P-\tilde P\|_F\|Q-\tilde Q\|_F \label{eqn: STlowerboundeq2} \\
			&= \sigma_r \|Q - \tilde Q\|_F^2 + o(1) \|P-\tilde P\|_F\|Q-\tilde Q\|_F\label{eqn: STlowerboundeq3}\\
			&=\sigma_r \|Q - \tilde Q\|_F^2 + o(1) (\|P-\tilde P\|_F^2+\|Q-\tilde Q\|_F^2) \label{eqn: STlowerbound}
		\end{align}
		where the equation~\ref{eqn: STlowerboundeq1} follows from the defining equation $\tilde P \tilde Q^\top =0$, the estimate~\ref{eqn: STlowerboundeq2} follows from expanding the square, the equation~\ref{eqn: STlowerboundeq3} follows from the estimate $Q=o(1)$, and  the equation~\ref{eqn: STlowerbound} follows from Young's inequality.
		Note that $\hat Q$ is the projection of $Q$ onto the subspace $\{Q \colon \hat P  Q^\top = 0\}$. In particular, we project each row of $Q$ onto $\ker(\hat P)$  to obtain $\hat Q$, which implies   $\|\hat Q\|_F\le \|Q\|_F$ and $(Q-\hat Q){\hat Q}^\top = 0$. Therefore, we deduce
		\begin{align}
			\abs{\|QQ^\top\|_F^2 - \|\hat Q {\hat Q}^\top\|_F^2} &= \abs{\|\hat Q \hat Q^\top + (Q-\hat Q){\hat Q}^\top + Q(Q-\hat Q )^\top\|_F^2 - \|\hat Q{\hat Q}^\top\|_F^2} \notag \\
			&=\abs{2\langle \hat Q \hat Q^\top,(Q-\hat Q){\hat Q}^\top + Q(Q-\hat Q )^\top\rangle+\|(Q-\hat Q){\hat Q}^\top + Q(Q-\hat Q )^\top\|^2_F}\notag\\
			&\le \|Q(\hat Q - Q)^\top\|_F^2  + 2\abs{\dotp{\hat Q{\hat Q}^\top,   Q(Q-\hat Q)^\top}} \label{eqn:jeb1}\\
			&= o(1)\|Q-\hat Q\|_F^2  +  2\abs{\dotp{\hat Q{\hat Q}^\top,(Q-\hat Q)(Q - \hat Q)^\top}} \label{eqn:jeb2}\\
			&= o(1)\|Q-\hat Q\|_F^2,\label{eqn: boundonTT}
		\end{align}
		where \eqref{eqn:jeb1} and \eqref{eqn:jeb2} follow from the equation $(Q-\hat Q){\hat Q}^\top = 0$. 
		Therefore, by taking $\delta$ sufficiently small, returning to \eqref{eqn:multline} we 
		compute
		\begin{align*}
			f(B) - f(P_\cM(B))	&=\|PP^\top -D\|_F^2 + 2 \|PQ^\top\|_F^2 + \|QQ^\top\|_F^2-\|\hat Q \hat Q^\top\|_F^2\\
			&\ge \frac{\sigma_r}{4} (\| P -\tilde P\|_F^2 + \|Q-\tilde  Q\|_F^2) + o(1)\|Q-\hat Q\|_F^2 \\
			&\ge \frac{\sigma_r}{8}\|B -P_\cM(B)\|_F^2,
		\end{align*}
		where the first inequality follows from equations~\eqref{eqn: PtotildeP}, \eqref{eqn: STlowerbound} and~\eqref{eqn: boundonTT}, and the second inequality follows from the fact that $(\tilde P,\tilde Q)$ lies in $\cM$ and we have
		$\fnorm{Q -\hat Q}\le \|B- P_\cM(B)\|_F \le \fnorm{B - \begin{pmatrix}
				\tilde P\\
				\tilde Q
		\end{pmatrix}}$. To see the reverse inequality, we compute
		\begin{align*}
			f(B) - f(P_\cM(B)) &= \|PP^\top -D\|_F^2 + 2 \|PQ^\top\|_F^2 + \|QQ^\top\|_F^2-\|\hat Q \hat Q^\top\|_F^2\\
			&\le 16 \sigma_1 \|P - \hat P\|_F^2 +2\|(P- \hat P)Q^\top + \hat P(Q-\hat Q)^\top\|_F^2+ o(1)\|Q -\hat Q\|_F^2\\
			&\le 18\sigma_1 \|P-\hat P\|_F^2 + 2\sigma_1 \|Q-\hat Q\|_F^2 + o(1)\|Q -\hat Q\|_F^2\\
			&\le 18\sigma_1 \|B - P_\cM(B)\|_F^2, 
		\end{align*}
		where the first inequality follows from equations~\eqref{eqn: PP-D_upper_bound} and \eqref{eqn: boundonTT}.
		The proof is complete.
		
		\subsection{Proof of Lemma~\ref{lem: matrixfac_fourthorder}}
		\begin{proof}
			Firstly, we note that for any $B \in \cM$, we have $\dist(B,S) = \|Q\|_F$. Let $\sigma_i(Q)$ be the $i$-th largest singular values of $Q$. We compute 
			\begin{align*}
				f(B)=\|QQ^\top\|_F^2 &= \sum_{i=1}^{k} \sigma_i^4(Q)\ge \frac{1}{k} \paren{\sum_{i=1}^{k}  \sigma_i^2(Q)}^2= \frac{1}{k}\|Q\|_F^4,
			\end{align*}
			Therefore, for any $B \in \cM$, we have
			\begin{align*}
				f(B) = \|QQ^\top\|_F^2 \ge \frac{1}{k} \|Q\|_F^4 = \frac{1}{k}\dist^4(B,\cS). 
			\end{align*} 
			Conversely, we have
			\begin{align*}
				f(B) = \|QQ^\top\|_F^2 \le \fnorm{Q}^4 = \dist^4(B,S),
			\end{align*} 
			which completes the proof of \eqref{eqn:const_growth_man}. Appealing to Theorem~\ref{thm:simplified_assumpt}, we conclude that Assumption~\ref{assum:tangent} indeed holds.
		\end{proof}
		
		\subsection{Explicit estimates in Assumption~\ref{assum:tangent} and constant rank}
		In this section, we verify directly items 2 and 3 of Assumption~\ref{assum:tangent} with explicit constants and show that the Hessian $\nabla^2 f$ has constant rank on $S$. These results are not needed for the rest of the arguments, and we include them here for completeness.

		\begin{lemma}[Aiming towards solution and the gradient bound]\label{lem: matrixfacbregularity}
			For any $B \in \cM$, we have 
			\begin{align}
				\dotp{\nabla f(B), B-P_S(B)} &\ge f(B),\label{eqn:firs_aim}\\
				4 \cdot \dist^3(B,S)&\geq \|\nabla f(B)\|_F.\label{eqn:grad_bound}
			\end{align}
		\end{lemma}
		\begin{proof}
			First, elementary calculations show that for any $B$, we have
			\begin{align*}
				\nabla f(B) = 4 \cdot \begin{pmatrix}
					(PP^\top -D)P + PQ^\top Q \\
					Q P^\top P + QQ^\top Q 
				\end{pmatrix}.
			\end{align*}
			In particular, for any $B \in \cM$, we have $
			\nabla f(B) =  \begin{pmatrix}
				0 \\
				4QQ^\top Q 
			\end{pmatrix}$. On the other hand, for $B \in \cM$, we know that $B - P_S(B) = (0,Q)$. Consequently, we deduce
			\begin{align*}
				\dotp{\nabla f(B), B- P_\cS(B)} &= 4 \dotp{QQ^\top Q, Q}= 4 \|QQ^\top\|_F^2=4 f(B),
			\end{align*}
			which completes the proof of \eqref{eqn:firs_aim}. Finally, we compute
			$$
			\|\nabla f(B)\|_F = 4\|QQ^\top Q\|_F \le 4\|Q\|_F^3 = 4\cdot \dist^3(B, S). 
			$$
			thereby verifying \eqref{eqn:grad_bound}.
		\end{proof}

		We next show that $\nabla^2 f$ has constant rank on $S$.

		\begin{theorem}[Constant rank]\label{thm:factorization_fourth_growth}
			Fix a $C^2$-smooth function $h\colon\R^{d\times d}\to \R$ and define the function 
			$g\colon\R^{d\times k}\to \R$ by setting $g(B)=h(BB^\top)$. Then the Hessian $\nabla^2 g$ has constant rank on any set of the form $S=\{B:BB^\top=X\}$.
		\end{theorem}
		\begin{proof}
			Note that the group of orthogonal $k\times k$ matrices acts transitively on $S$ by right multiplication, and moreover, $g$ is invariant under this group action. Define for any $k\times k$ orthogonal matrix $V$ the map $\varphi_V(B)=BV$. Clearly, $\varphi_V$ is a linear isomorphism, and equality holds:
			$$f=f\circ\varphi_V.$$
			Consequently, differentiating twice yields the expression
			$\nabla^2 f(B)= \varphi_V^*\nabla^2 f(BV)\varphi_V$, and therefore $\nabla^2 f$ has constant rank on $S$ as claimed.
		\end{proof}

		\section{Proofs for overparametrized matrix sensing}\label{sec:proof_mat-sense}
		\subsection{Proof of Theorem~\ref{thm: sensinggrowth}}
		\begin{proof}
			By Definition~\ref{def: RIP} and the fact that $\rank(BB^\top - X) \le r+k$, we have
			$$
			\frac{1}{2}\|BB^\top - X\|_F^2 \le f(B) \le \frac{1}{2} \|BB^\top - X\|_F^2.
			$$
			Consequently, the set $S=\{B: BB^\top=X\}$ coincides with the set of minimizers of $f$. Moreover, the quartic growth \eqref{eqn:quart_lower_mat_sense} follows immediately from the lower bound	$f(B) \ge \frac{1}{2}\|BB^\top - X\|_F^2 $ and Lemma~\ref{lem: matrixfac_fourthorder}. The fact that $\nabla^2 f$ has constant rank on $S$ follows from Theorem~\ref{thm:factorization_fourth_growth}. Finally, applying Corollary~\ref{cor:morseravineexistsfourth} we deduce that Assumption~\ref{assum:tangent} holds for the $C^{\infty}$ smooth ravine around any point in $S$. 
		\end{proof}

		\section{Proofs for overparametrized neural network}\label{sec:proofs_overparamNNs}
		We begin with some notation. Let $\theta_{12}$ denote the angle between $w_1$ and $w_2$, and $\theta_i$ denotes the angle between $w_i$ and $v$ for $i=1,2$. For each $w_i$, we decompose it into $w_i = w_i^\parallel + w_i^\perp$, where $w_i^\parallel= \proj_{\spann\{v\}}(w_i)$ and $w_i^\perp = \proj_{\spann\{v\}^\perp}(w_i)$. Moreover, we denote the normal and tangent space of $\cM$ by $N_\cM$ and $T_\cM$, which do not depend on the base point since $\cM$ is affine. It has been shown in~\cite{safran2018spurious} that the closed form expression for $f$ is 
		\begin{align}\label{eqn: dufunction}
			f(w) = \frac{1}{4}\|w_1 + w_2 - v\|^2 + \frac{1}{2\pi}\left[ (\sin \theta_{12} - \theta_{12}\cos \theta_{12}) \|w_1\|\|w_2\| - \sum_{i=1}^{2} (\sin \theta_i - \theta_i \cos \theta_i) \|w_i\| \|v\|\right],
		\end{align}
		Also, the closed-form expression for the gradient $\nabla f$ is the following:
		\begin{align}\label{eqn: gradientformula}
			\nabla_{w_i} f(w) = \frac{1}{2} \left(\sum_{i=1}^{n} w_i - v\right) + \frac{1}{2\pi}\left[ \left( \sum_{j\neq i}\|w_j\|\sin \theta_{ij} -\|v\|\sin \theta_i \right) \bar w_i  - \sum_{j\neq i} \theta_{ij} w_{j} +\theta_i v\right],
		\end{align}
		where $\bar w_i = \frac{w_i}{\|w_i\|}$. We start with several technical lemmas that serve as stepping stones for proving the main theorem.
		\begin{lemma}[{\cite[Lemma 18]{xu2023over}}]\label{lem: anglelb}  For any $w$, we have
			$$
			\|w_i\|^2 \theta_i^3 \le 30\pi f(w),  \quad \text{for $i=1,2$.}
			$$
		\end{lemma}
		\begin{lemma}\label{lem: thetaandorthogonalpart}
			For $w$ sufficiently close to $S$, we have $\theta_i = \Theta(\|w_i^\perp\|)$.
		\end{lemma}
		\begin{proof}
			For $w$ sufficiently close to $S$, the angle $\theta_i$ is sufficiently small and we have 
			$$\theta_i = \Theta(\tan \theta_i) = \Theta\paren{\frac{\|w_i^\perp\|}{\|w_i^\parallel\|}} = \Theta(\|w_i^\perp\|),$$
			which completes the proof. 
		\end{proof}
		
		\begin{lemma}[{\cite[Lemma 19]{xu2023over}}]\label{lem: llbbyr}
			There exists $\delta>0$ and $C>0$ such that for all $w$ with $\frac{\|v\|}{16} \le \|w_i\| \le 4\|v\|$ for $i=1,2$ and $L(w) \le \delta$, we have 
			$$
			\left\|w_1 + w_2 - v \right\| \le C f^{\frac{1}{2}}(w).
			$$
		\end{lemma}
		\begin{proof}
			The proof is the same as the proof~\cite[Lemma 19]{xu2023over}, except that we have a slightly more relaxed range of $w_i$. 
		\end{proof}

		\begin{lemma}[Gradients on manifold]\label{lem: gradientonmanifold}
			For any $w \in \cM$ sufficiently close to $S$, we have 
			\begin{align}\label{eqn: nngradonmanifold}
				\nabla_{w_1} f(w) = \frac{1}{2\pi}\paren{\theta_1 w_1 - \theta_2 w_2}, \qquad \nabla_{w_2}f(w) = \frac{1}{2\pi}\paren{\theta_2 w_2 - \theta_1 w_1}.
			\end{align}
			In particular, we have $\nabla f(w) \in  T_\cM$.
		\end{lemma}
		\begin{proof}
			
			By the gradient formula, we have 
			$$
			\nabla_{w_1}f(w) = \frac{1}{2} \left(\sum_{i=1}^{2} w_i - v\right) + \frac{1}{2\pi}\left[ \left( \|w_2\|\sin \theta_{12} -\|v\|\sin \theta_1 \right) \bar w_1  -  \theta_{12} w_{2} +\theta_1 v\right],
			$$
			and
			$$
			\nabla_{w_2} f(w) = \frac{1}{2} \left(\sum_{i=1}^{2} w_i - v\right) + \frac{1}{2\pi}\left[ \left( \|w_1\|\sin \theta_{12} -\|v\|\sin \theta_2 \right) \bar w_2  -  \theta_{12} w_{1} +\theta_2 v\right].
			$$
			For any $w \in \cM$ sufficiently close to $S$, we have $w_1 + w_2 = v$, so we have $\theta_{12} = \theta_1 + \theta_2$,
			\begin{align}\label{eqn: propertyonmanifoldnormal}
				\|w_1\|\sin \theta_1 = \|w_2\| \sin \theta_2
			\end{align}
			and
			\begin{align}\label{eqn: propertyonmanifoldtangent}
				\|w_1 \|\cos \theta_1 + \|w_2\|\cos \theta_2 = v.
			\end{align}
			We successively compute
			\begin{align*}
				\|v\| \sin \theta_1 \bar w_1 &= (\|w_1 \|\cos \theta_1 + \|w_2\|\cos \theta_2)\sin \theta_1 \frac{w_1}{\|w_1\|}\\
				&= \cos \theta_1 \sin \theta_1 w_1 + \frac{\|w_2\|}{\|w_1\|}\cos \theta_2 \sin \theta_1 w_1\\
				&= \frac{\|w_2\|}{\|w_1\|}\cos \theta_1 \sin \theta_2 w_1 + \frac{\|w_2\|}{\|w_1\|}\cos \theta_2 \sin \theta_1 w_1\\
				&= \|w_2\| \sin (\theta_1 + \theta_2) \bar w_1,
			\end{align*}
			where the first equality follows from~\eqref{eqn: propertyonmanifoldtangent} and the third equality follows from~\eqref{eqn: propertyonmanifoldnormal}. As a result, we obtain 
			$$
			\nabla_{w_1}f(w) = \frac{1}{2\pi}\paren{\theta_1 w_1 - \theta_2 w_2}.
			$$
			By the same argument, we can show 
			$$
			\nabla_{w_2}f(w) = \frac{1}{2\pi}\paren{\theta_2 w_2 - \theta_1 w_1}.
			$$
			Combining, we have
			$$
			\nabla_{w_1}f(w) + \nabla_{w_2}f(w)  = 0,
			$$
			which implies the inclusion $\nabla f(w) \in T_\cM$.
		\end{proof}

		\begin{lemma}\label{lem: nnthirdordergrowth}
			For any $w \in \cM$ sufficiently close to $S$, we have 
			$$f(w) = \Theta(\|w - P_S(w)\|^3).$$
		\end{lemma}
		\begin{proof}
			For  $ w \in \cM$ sufficiently close to $S$, we have $w_1 + w_2 = v$ and $\theta_{12} = \theta_1 + \theta_2$. Note that 
			\begin{align*}
				f(w) &= \frac{1}{2\pi}\left[ (\sin \theta_{12} - \theta_{12}\cos \theta_{12}) \|w_1\|\|w_2\| - \sum_{i=1}^{2} (\sin \theta_i - \theta_i \cos \theta_i) \|w_i\| \|v\|\right]\\
				&= O(\theta_{12}^3 + \theta_1^3 + \theta_2^3)\\
				&= O(\theta_1^3 + \theta_2^3)\\
				&= O(\|w_1^\perp\|^3 + \|w_2^\perp\|^3)\\
				&= O(\dist^3(w,S)),
			\end{align*}
			where the second equality follows from the bound on the size of $\|w_1\|$ and $\|w_2\|$ implied by the definition of $S$ and  $\sin \theta -\theta \cos \theta = \Theta(\theta^3)$ as $\theta \rightarrow 0$, the fourth equality follows from Lemma~\ref{lem: thetaandorthogonalpart}, and the last equality follows from the fact that $w - P_S(w) = \begin{pmatrix}
				w_1^\perp\\
				w_2^\perp
			\end{pmatrix}.$
			On the other hand, by Lemma~\ref{lem: anglelb}, for $w \in \cM$ sufficiently close to $S$, we have $\theta_1^3 + \theta_2^3 = O(f(w))$. By a similar calculation as above, we obtain
			$$
			\dist^3 (w,S) = O(\theta_1^3 + \theta_2^3) = O(f(w)).
			$$
			Therefore, for $w \in \cM$ sufficiently close to $S$, we have 
			$$
			f(w) = \Theta(\dist^3(w,S)),
			$$
			which completes the proof.
		\end{proof}
		
		\begin{lemma}\label{lem: nnbregularity}
			For any $w\in U \cap \cM$ sufficiently close to $S$, we have 
			$$
			\dotp{\nabla f(w), w - P_S(w)} \ge f(w).
			$$
		\end{lemma}
		\begin{proof}
			First, we note that for $w \in U\cap \cM$,
			$$
			w - P_S(w) = \begin{pmatrix}
				w_1^\perp\\
				w_2^\perp
			\end{pmatrix}.
			$$
			Note that
			\begin{align*}
				2\pi\dotp{\nabla f(w), w - P_S(w)} &= \theta_1 \|w_1^\perp\|^2 + \theta_2 \|w_2^\perp\|^2 + (\theta_1+\theta_2) \|w_1^\perp\|\|w_2^\perp\|\\
				&= \theta_1 \sin^2 \theta_1 \|w_1\|^2 + \theta_2 \sin^2 \theta_2 \|w_2\|^2 + (\theta_1+\theta_2)\sin \theta_1 \sin \theta_2 \|w_1\|\|w_2\|,
			\end{align*}
			where the first equality follows from~Lemma~\ref{lem: gradientonmanifold} and the second  follows from the definition of $\theta_1$ and $\theta_2$. On the other hand, by equation~\ref{eqn: dufunction},  for $w \in \cM$ sufficiently close to $S$, we have
			\begin{align*}
				2\pi L(w) &= \paren{\sin(\theta_1+\theta_2) - (\theta_1+\theta_2)\cos(\theta_1+\theta_2)}\|w_1\|\|w_2\| - \sum_{i=1}^{2}\paren{\sin\theta_i - \theta_i \cos \theta_i}\|w_i\|\|v\|\\
				&=  \paren{\sin(\theta_1+\theta_2) - (\theta_1+\theta_2)\cos(\theta_1+\theta_2)}\|w_1\|\|w_2\|\\
				&\qquad  - \sum_{i=1}^{2}\paren{\sin\theta_i - \theta_i \cos \theta_i}\|w_i\|(\|w_1\|\cos \theta_1 + \|w_2\|\cos \theta_2)\\
				&= \paren{\sin(\theta_1+\theta_2) - (\theta_1+\theta_2)\cos(\theta_1+\theta_2)}\|w_1\|\|w_2\|\\
				&\qquad- \paren{\sin \theta_1 \cos \theta_2 +\sin \theta_2 \cos \theta_1 - \theta_1 \cos \theta_1\cos \theta_2 - \theta_2 \cos\theta_2\cos\theta_1} \|w_1\|\|w_2\| \\
				&\qquad  - \paren{\sin \theta_1 \cos \theta_1- \theta_1\cos^2 \theta_1} \|w_1\|^2 - \paren{\sin \theta_2 \cos \theta_2- \theta_2\cos^2 \theta_2} \|w_2\|^2\\
				&= (\theta_1+\theta_2)\sin \theta_1 \sin\theta_2 \|w_1\|\|w_2\|\\
				&\qquad  - \paren{\sin \theta_1 \cos \theta_1- \theta_1\cos^2 \theta_1} \|w_1\|^2 - \paren{\sin \theta_2 \cos \theta_2- \theta_2\cos^2 \theta_2} \|w_2\|^2,
			\end{align*}
			where the second equality follows from $\|v\| = \|w_1\|\cos \theta_1 + \|w_2\|\cos \theta_2$, the third equality follows from direct expansion, and the fourth equality follows from basic properties of $\sin$ and $\cos$ functions. Consequently, we have
			\begin{align*}
				2\pi \dotp{\nabla L(w), w -P_S(w)} - 2\pi L(w) = \sum_{i=1}^{2} \paren{\theta_i \sin^2 \theta_i+ \sin\theta_i \cos \theta_i - \theta_i \cos^2 \theta_i} \|w_i\|^2.
			\end{align*}
			By Taylor expansion of $\sin \theta$ and $\cos \theta$, we can easily show that the right-hand side is nonnegative when $\theta_i$ is small. So, the result follows.
		\end{proof}
		
		\begin{lemma}\label{lem: nngdsizebound}
			For any $w \in \cM$ sufficiently close to $S$, we have 
			$$
			\|\nabla L(w)\| = O(\|w - P_S(w)\|^2).
			$$
		\end{lemma}
		\begin{proof}
			First, we note that for $w \in U\cap \cM$, we have
			$$
			w - P_S(w) = \begin{pmatrix}
				w_1^\perp\\
				w_2^\perp
			\end{pmatrix}.
			$$
			Therefore, we deduce 
			\begin{align*}
				\|\nabla_{w_1} f(w)\| &= \|\theta_1 w_1 - \theta_2 w_2\|\\
				&=   \|\theta_1 w_1^\parallel - \theta_2 w_2^\parallel\| + O(\|w_1^\perp\|^2 + \|w_2^\perp\|^2) \\
				&=  \|\tan(\theta_1) w_1^\parallel - \tan(\theta_2)w_2^\parallel\| + O(\|w_1^\perp\|^2 + \|w_2^\perp\|^2) \\
				&= |\|w_1^\perp\| - \|w_2^\perp\|| + O(\|w_1^\perp\|^2  + \|w_2^\perp\|^2)\\
				&= O(\|w - P_S(w)\|^2),
			\end{align*}
			where the first equality follows from Lemma~\ref{lem: gradientonmanifold},the second equality follows from Lemma~\ref{lem: thetaandorthogonalpart}, the third equality follows from Taylor expansion of $\tan(\theta)$ and Lemma~\ref{lem: thetaandorthogonalpart}, the fourth equality follows from the definition of $\theta$, and the fifth equality follows from the fact that $w_1^\perp + w_2^\perp = 0$ on $\cM$.
			By the same argument, we can show that $\|\nabla_{w_1} f(w)\|^2 = O(\|w - P_S(w)\|^2).$ The result follows.
		\end{proof}

		\subsection{Proof of Theorem~\ref{thm: simonduravine}}
		We fix $\bar w$ in the proof. For the reader's convenience, we use $w_\cM$ to represent points on the manifold $\cM$. Recall that $f_T = f\circ P_\cM$ and $f_N = f -f_T$.  Note that $f$ is $C^2$ in a neighborhood of set $S$ by \cite[Lemma 1]{safran2018spurious}. In addition, the manifold $\cM$ is an affine space, so the projection map $P_\cM$ is $C^\infty$ smooth. By Lemma~\ref{lem: gradientonmanifold}, for any $w_\cM \in \cM$ sufficiently close to $\bar w$, we have
		\begin{align*}
			\nabla f_N(w_\cM) = \nabla f(w_\cM) - P_{T_\cM} \nabla f(w_\cM) = 0.
		\end{align*} 
		Since $f$ is $C^2$ near $S$, by Taylor's theorem, for any $w$ sufficiently close to $\bar w$, we have 
		$$f_N(w) = O( \dist^2(w,\cM)).$$
		On the other hand, by Lemma~\ref{lem: llbbyr}, for $w$ sufficiently close to $\bar w$, we have
		\begin{align}\label{eqn:quadraticlb}
			f(w) \ge \Theta(1)\|w_1 + w_2 - v\|^2 = \Theta(1) \dist^2(w, \cM). 
		\end{align}
		Note also that $\nabla f(\bar w) =0$, so equation~\eqref{eqn:quadraticlb} implies that $\nabla f^2(\bar w)$ is positive definite when restricted onto $N_\cM$. By continuity of $\nabla^2 f$, for all $w$ sufficiently close to $\bar w$, the Hessian $\nabla^2 f(w)$ is positive definite when restricted onto $N_\cM$ and its eigenvalues are bounded away from zero. Since $\nabla^2 f_T = P_{T_\cM} \nabla^2 fP_{T_\cM}$, we have that $P_{N_\cM}\nabla^2 f_N(w)P_{N_\cM} = P_{N_\cM}\nabla^2 f(w)P_{N_\cM}$, and thus for any $w_\cM \in \cM$ sufficiently close to $\bar w$, $\nabla^2 f_N(w_\cM)$ is positive definite when restricted onto $N_\cM$ and its eigenvalues are bounded away from zero. Combining with the result that $\nabla f_N(w_\cM) = 0$ and $f_N(w_\cM) = 0$, for any $u \in N_\cM$, we have 
		\begin{align*}
			f_N(w_\cM+u) =  u^\top \nabla^2 f_N(w_\cM) u + o(1) \|u\|^2,
		\end{align*}
		where $o(1) \rightarrow 0$ as $\|u\| \rightarrow 0$ uniformly in $w_\cM$ and $u$ for $w_\cM$ near $\bar w$ since $\nabla^2 f_N$ is uniformly continuous near $\bar w$. 
		In other words, for $w$ sufficiently close to $\bar w$, we have 
		$$
		f_N(w) = \Theta(1) \dist^2(w,\cM).
		$$
		This proves that $\cM$ is a $C^2$ ravine at $\bar x$. The Item~\ref{item: localgrowthconditionravine}, Item~\ref{item: localaimingtowardsolutionravine}, and Item~\ref{item: localsizeofgdravine} of Assumption~\ref{assum:tangent} follow from Lemma~\ref{lem: nnthirdordergrowth}, Lemma~\ref{lem: nnbregularity}, and Lemma~\ref{lem: nngdsizebound}, respectively.
\end{document}